\newtheorem{theorem}{Theorem}[section]
\newtheorem{lemma}[theorem]{Lemma}
\newtheorem{proposition}[theorem]{Proposition}
\newenvironment{remark}[1][Remark.]{\begin{trivlist}
\item[\hskip \labelsep {\bfseries #1}]}{\end{trivlist}}
\newenvironment{example}[1][Example.]{\begin{trivlist}
\item[\hskip \labelsep {\bfseries #1}]}{\end{trivlist}}
\title{Asymptotics for the number of spanning trees in circulant graphs and degenerating $d$-dimensional discrete tori\footnote{The author acknowledges support from the Swiss NSF grant $200021\_132528/1$.}}
\author{Justine Louis}
\date{1 December 2014}
\DeclareMathOperator{\argcosh}{argcosh}
\newcommand*{\defeq}{\mathrel{\vcenter{\baselineskip0.5ex \lineskiplimit0pt
                     \hbox{\scriptsize.}\hbox{\scriptsize.}}}=}
\begin{document}
        \maketitle
\begin{flushright}
\textit{Mathematics Subject Classification}: 05C30; 33A40; 11M99; 58J52
\end{flushright}

\begin{abstract}
In this paper we obtain precise asymptotics for certain families of graphs, namely circulant graphs and degenerating discrete tori. The asymptotics contain interesting constants from number theory among which some can be interpreted as corresponding values for continuous limiting objects.
We answer one question formulated in a paper from Atajan, Yong and Inaba in \cite{atajan2006further} and formulate a conjecture in relation to the paper from Zhang, Yong and Golin \cite{zhang2005chebyshev}.
A crucial ingredient in the proof is to use the matrix tree theorem and express the combinatorial Laplacian determinant in terms of Bessel functions. A non-standard Poisson summation formula and limiting properties of theta functions are then used to evaluate the asymptotics.
\end{abstract}

\begin{flushleft}
\textit{Keywords}: spanning trees, circulant graphs, Bessel functions, zeta functions
\end{flushleft}

\section{Introduction}
The number of spanning trees of a finite graph is an interesting invariant which has many applications in different fields such as network reliability (for example see \cite{colbourn1987combinatorics}), statistical physics \cite{mcdonald2012potts}, designing electrical circuits; for more applications see \cite{cvetkovic1980spectra}. In $1847$ Kirchhoff established the matrix tree theorem \cite{kirchhoff1847ueber} which relates the number of spanning trees $\tau(G)$ in a graph $G$ with $\lvert V(G)\rvert$ vertices to the determinant of the combinatorial Laplacian on $G$ by the following relation
\begin{equation*}
\tau(G)=\frac{1}{\lvert V(G)\rvert}\textrm{det}^\ast\Delta
\end{equation*}
where $\textrm{det}^\ast\Delta$ is the product of the non-zero eigenvalues of the Laplacian on $G$.

One type of graphs, so-called circulant graphs, also known as loop networks, has been much studied. Let $1\leqslant\gamma_1\leqslant\cdots\leqslant\gamma_d\leqslant\lfloor n/2\rfloor$ be positive integers. A circulant graph $C^{\gamma_1,\ldots,\gamma_d}_n$ is the $2d$-regular graph with $n$ vertices labelled $0,1,\ldots,n-1$ such that each vertex $v\in\mathbb{Z}/n\mathbb{Z}$ is connected to $v\pm\gamma_i$ mod $n$ for all $i\in\{1,\ldots,d\}$. Figure \ref{circ} illustrates two examples. The problem of computing the number of spanning trees in these graphs can be approached in several ways. One of the first results, proved by Kleitman and Golden \cite{kleitman1975counting}, see also \cite{baron1985number} and \cite{yong1997numbers}, states that $\tau(C_n^{1,2})=nF_n^2$, where $F_n$ are the Fibonacci numbers. Boesch and Prodinger \cite{boesch1986spanning} computed the number of spanning trees for different classes of graphs with algebraic techniques using Chebyshev polynomials. Zhang, Yong and Golin \cite{
golin2002further,
zhang2005chebyshev} used this technique for circulant graphs. The same authors showed in \cite{zhang1999number} that the number of spanning trees in circulant graphs with fixed generators satisfies a recurrence relation, that is $\tau(C_n^{\gamma_1,\ldots,\gamma_d})=na_n^2$ where $a_n$ satisfies a recurrence relation of order $2^{\gamma_d-1}$. This was also proved combinatorially later by Golin and Leung in \cite{golin2005unhooking}. They extended their method to circulant graphs with non-fixed generators in \cite{golin2005counting}. In \cite{atajan2006further}, Atajan, Yong and Inaba improved the order of the recurrence relation for $a_n$ and found the asymptotic behaviour of $a_n$, \textit{i.e.} $a_n\sim c\phi^n$, where $c$ and $\phi$ are constants which are obtained from the smallest modulus root of the generating function of $a_n$. They again improved this in \cite{atajan2010efficient} by finding an efficient way of solving the recurrence relation of $a_n$.
\begin{figure}[!h]
\label{circ}
\centering
\includegraphics[width=12cm]{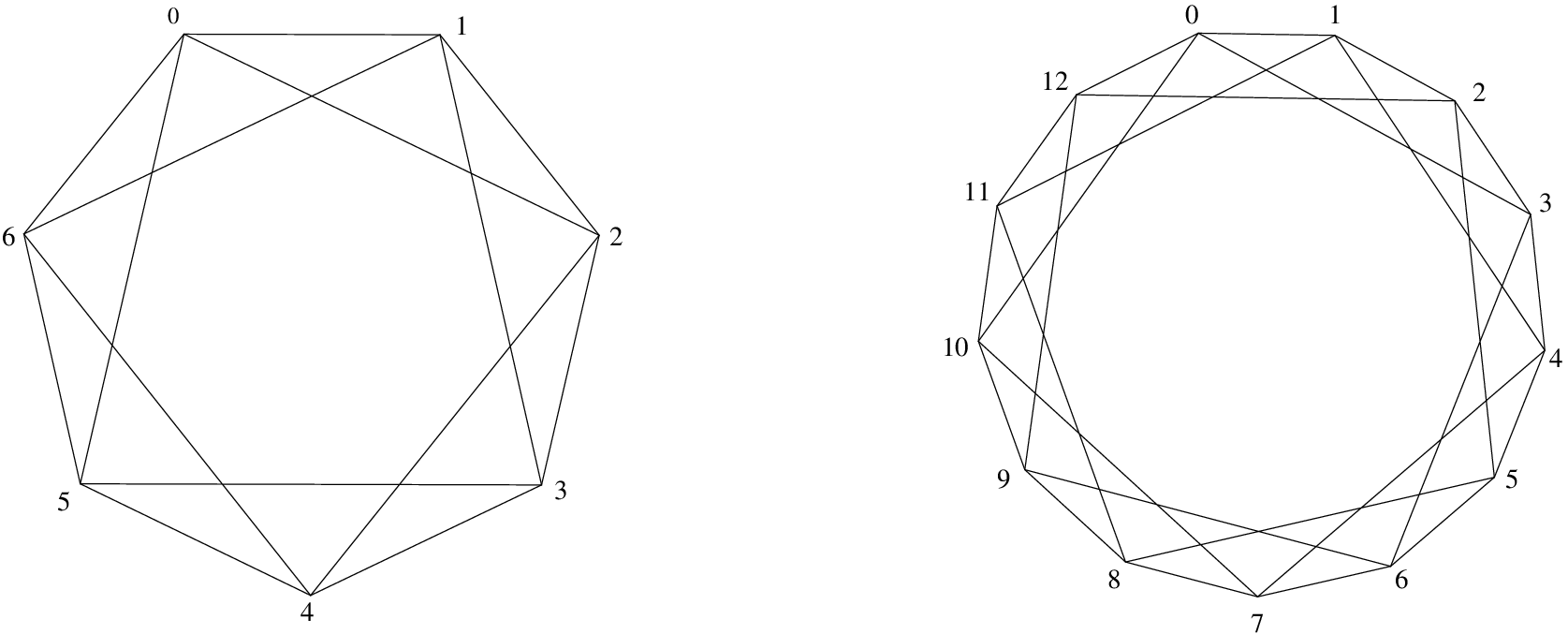}
\caption{The circulant graphs $C_7^{1,2}$ and $C_{13}^{1,3}$.}
\end{figure}

In this work we are interested in studying the asymptotic behaviour of the number of spanning trees in circulant graphs with fixed generators and in $d$-dimensional discrete tori.
This will be done by extending the work of Chinta, Jorgenson and Karlsson in \cite{chinta2010zeta} and \cite{chinta2011complexity} to these cases. In their papers, the authors developed a technique to compute the asymptotic behaviour of spectral determinants of sequences of discrete tori $\mathbb{Z}^d/\Lambda_n\mathbb{Z}^d$ where $\Lambda_n$ is a $d\times d$ integer matrix such that $\textrm{det}\ \Lambda_n\rightarrow\infty$ and $\Lambda_n/(\textrm{det}\ \Lambda_n)^{1/d}\rightarrow A\in SL_d(\mathbb{R})$ as $n\rightarrow\infty$. The two families of graphs which will be considered here do not satisfy this condition. An important ingredient is the theta inversion formula (see Proposition \ref{thetainv} below) which relates the eigenvalues of the combinatorial Laplacian to the modified $I$-Bessel functions. The method then consists in studying the asymptotics of integrals involving these Bessel functions. In the first part of this work we apply it to the case of circulant graphs with fixed generators. We will 
prove the following theorem:
\begin{theorem}
\label{ThCirc}
Let $C_n^\Gamma$ be a circulant graph with $n$ vertices and $d$ generators independent of $n$ given by $\Gamma\defeq\{1,\gamma_1,\ldots,\gamma_{d-1}\}$, such that $1\leqslant\gamma_1\leqslant\cdots\leqslant\gamma_{d-1}\leqslant\lfloor\frac{n}{2}\rfloor$, and let $\textnormal{det}^\ast\Delta_{C_n^\Gamma}$ be the product of the non-zero eigenvalues of the Laplacian on $C_n^\Gamma$. Then as $n\rightarrow\infty$
\begin{equation*}
\log\textnormal{det}^\ast\Delta_{C_n^\Gamma}=n\int_0^\infty(e^{-t}-e^{-2dt}I_0^\Gamma(2t,\ldots,2t))\frac{dt}{t}+2\log n-\log c_\Gamma+o(1)
\end{equation*}
where $c_\Gamma=1+\sum_{i=1}^{d-1}\gamma_i^2$ and
\begin{equation*}
I_0^\Gamma(2t,\ldots,2t)=\frac{1}{2\pi}\int_{-\pi}^\pi e^{2t(\cos w+\sum_{i=1}^{d-1}\cos\gamma_iw)}dw
\end{equation*}
is the $d$-dimensional modified $I$-Bessel function of order zero.
\end{theorem}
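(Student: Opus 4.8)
The plan is to combine the matrix tree theorem with a heat kernel (theta function) representation and then pass to the continuous limit by Poisson summation. First I would diagonalize the Laplacian: since $C_n^\Gamma$ is a Cayley graph on $\mathbb{Z}/n\mathbb{Z}$, the Laplacian is circulant and its eigenvalues are $\lambda_k = 2d - 2\sum_{\gamma\in\Gamma}\cos(2\pi k\gamma/n)$ for $k=0,\ldots,n-1$, with $\lambda_0 = 0$. By the matrix tree theorem $\log\textnormal{det}^\ast\Delta_{C_n^\Gamma} = \sum_{k=1}^{n-1}\log\lambda_k$, and the Frullani identity $\log\lambda = \int_0^\infty(e^{-t}-e^{-\lambda t})\,dt/t$ turns this into
\begin{equation*}
\log\textnormal{det}^\ast\Delta_{C_n^\Gamma} = \int_0^\infty\Big((n-1)e^{-t} - (\theta_n(t)-1)\Big)\frac{dt}{t},\qquad \theta_n(t) = \sum_{k=0}^{n-1}e^{-\lambda_k t},
\end{equation*}
where $\theta_n$ is the trace of the heat kernel; the ``$-1$'' removes the zero mode and guarantees integrability at both ends.

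Second, I would write $\lambda_k = \lambda(2\pi k/n)$ with symbol $\lambda(w) = 2d - 2\sum_{\gamma\in\Gamma}\cos(\gamma w)$ and introduce the continuous analogue $\theta_n^{\mathrm{cont}}(t) = \frac{n}{2\pi}\int_{-\pi}^\pi e^{-\lambda(w)t}\,dw = n\,e^{-2dt}I_0^\Gamma(2t,\ldots,2t)$, so that $\theta_n(t)$ is exactly the $n$-point Riemann (trapezoidal) sum for $\theta_n^{\mathrm{cont}}(t)$. Here the theta inversion formula of Proposition \ref{thetainv} is what justifies rewriting $e^{-\lambda(w)t}$ through modified $I$-Bessel functions. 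Adding and subtracting $\theta_n^{\mathrm{cont}}$ splits the answer into a main term $n\int_0^\infty(e^{-t}-e^{-2dt}I_0^\Gamma(2t,\ldots,2t))\,dt/t$, which is exactly the leading term in the theorem, and a correction $\int_0^\infty\big(\theta_n^{\mathrm{cont}}(t) - (\theta_n(t)-1) - e^{-t}\big)\,dt/t$ that must be shown to equal $2\log n - \log c_\Gamma + o(1)$.

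Third, I would control the correction by Poisson summation applied to the periodic analytic function $w\mapsto e^{-\lambda(w)t}$: its Fourier coefficients are (sums of products of) $I$-Bessel functions, and the difference $\theta_n^{\mathrm{cont}}(t)-\theta_n(t)$ equals $-n\sum_{j\neq 0}\hat F(nj;t)$. This is the ``non-standard'' Poisson formula alluded to in the abstract, non-standard because the summand is a Bessel/theta kernel rather than a Gaussian. For fixed $t$ this difference is exponentially small in $n$, so the whole correction is governed by the large-$t$ regime, where the degeneracy of $\lambda$ enters. Taylor expanding $\lambda(w) = c_\Gamma w^2 + O(w^4)$ with $c_\Gamma = \sum_{\gamma\in\Gamma}\gamma^2 = 1 + \sum_{i=1}^{d-1}\gamma_i^2$ shows that for large $t$ the trace is controlled by the Gaussian theta function $\Theta(t) = \sum_{k\in\mathbb{Z}}e^{-c_\Gamma(2\pi k/n)^2 t}$, whose functional equation $\Theta(t) = \frac{n}{2\sqrt{\pi c_\Gamma t}}\sum_m e^{-n^2 m^2/(4c_\Gamma t)}$ I would use. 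In the diffusive window $1\ll t\ll n^2$ one finds $\theta_n^{\mathrm{cont}}(t)\approx \frac{n}{2\sqrt{\pi c_\Gamma t}}\approx \theta_n(t)$, so the integrand $\theta_n^{\mathrm{cont}}(t)-(\theta_n(t)-1)-e^{-t}$ is $\approx 1$; integrating $1\cdot dt/t$ across this window of logarithmic length $\log(n^2)$ produces the $2\log n$, the height $1$ being the contribution of the removed zero mode. The constant $-\log c_\Gamma$ then emerges from the precise matching at the two edges of the window through the functional equation.

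I expect the main obstacle to be exactly this last step: making the split at $t\sim 1$ and $t\sim n^2$ rigorous, showing that the contributions outside the diffusive window, the higher Fourier modes ($j\neq 0$), and the $O(w^4)$ corrections to $\lambda$ are all $o(1)$, and extracting the constant $-\log c_\Gamma$ cleanly rather than a mere $O(1)$. A useful consistency check is $d=1$ ($\Gamma = \{1\}$, $c_\Gamma = 1$): the graph is the cycle with $\tau = n$, so $\log\textnormal{det}^\ast\Delta = 2\log n$ exactly, which forces the main-term integral to vanish and the correction to be precisely $2\log n$, matching the claimed formula with $-\log c_\Gamma = 0$.
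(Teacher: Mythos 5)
Your plan is essentially the paper's own proof: the decomposition into the main term $n\mathcal{I}_d^\Gamma$ plus a correction is exactly Theorem \ref{methCirc}, your Poisson-summation step is the theta inversion formula of Proposition \ref{thetainv}, and the Taylor expansion $\lambda(w)=c_\Gamma w^2+O(w^4)$ is precisely how the paper identifies the limiting circle theta function $\Theta_{S^1}(c_\Gamma t)$ in Proposition \ref{thetalimit}. The one piece you defer --- making the diffusive-window analysis rigorous and extracting $-\log c_\Gamma$ rather than a mere $O(1)$ --- is carried out in the paper by rescaling $t\mapsto n^2t$, establishing uniform integrable bounds on the Bessel sums so that dominated convergence applies (Propositions \ref{int01theta-I0}, \ref{liminttheta-1} and \ref{limintI0}, together with Lemma \ref{int01exp} for the $2\log n$), and then recognizing the resulting combination of integrals as $-\zeta'_{S^1}(0)-\log c_\Gamma$ via the spectral zeta function of the circle split at $c_\Gamma$ (equation (\ref{zetaS1})).
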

The function $I_0^\Gamma$ appearing in the lead term is a generalization of the $2$-dimensional $J$-Bessel function in \cite{korsch2006two} and will be defined in section \ref{ddimB}.\\
Theorem \ref{ThCirc} can be compared to Lemma $2$ of Golin, Yong and Zhang in \cite{golin2010asymptotic} where they find the lead term of the asymptotic number of spanning trees. With our method we derived also the second term of the asymptotic. These are consistent with numerics given in \cite{zhang1999number,atajan2006further} by these authors. In particular, this answers one of their open problems stated in the conclusion of \cite{atajan2006further} that asks whether we can find out the exact value of the asymptotic constants. Indeed we show that
\begin{equation*}
c^2=\frac{1}{c_\Gamma}.
\end{equation*}

Let $\Lambda_n$ be a $d\times d$ invertible diagonal integer matrix. In the second part of this work we extend the method used in \cite{chinta2010zeta} to study, in two different cases, the asymptotic behaviour of spectral determinants of a sequence of $d$-dimensional degenerating discrete tori, that is, the Cayley graphs of the groups $\mathbb{Z}^d/\Lambda_n\mathbb{Z}^d$ with respect to the generators corresponding to the standard basis vectors of $\mathbb{Z}^d$. In the first case, it is degenerating in the sense that $d-p$ sides of the torus are tending to infinity at the same rate while $p$ sides tend to infinity sublinearly with respect to the $d-p$ sides. More precisely, let $\alpha_i$, $i=1,\ldots,p$, and $\beta_i$, $i=1,\ldots,d-p$, be positive non-zero integers and $a_n$ be a sequence of positive integers, the matrix $\Lambda_n$ considered is then given by $\Lambda_n=\textrm{diag}(\alpha_1a_n,\ldots,\alpha_pa_n,\beta_1n,\ldots,\beta_{d-p}n)$. In the first case, $a_n$ goes to infinity 
sublinearly with respect to $n$, that is
\begin{equation*}
\frac{a_n}{n}\rightarrow0\textnormal{ as }n\rightarrow\infty.
\end{equation*}
In the second case, the size of the $p$ sides of the torus stay constant ($a_n=1$ for all $n$) while the $d-p$ other sides go to infinity at the same rate. The matrix considered, denoted by $\Lambda_n^0$, is therefore given by $\Lambda_n^0=\textrm{diag}(\alpha_1,\ldots,\alpha_p,\beta_1n,\ldots,\beta_{d-p}n)$. Figure \ref{dtorus} illustrates an example.
\begin{figure}[!h]
\centering
\includegraphics[scale=0.8]{dtorus}
\caption{The discrete torus $\mathbb{Z}/n\mathbb{Z}\times\mathbb{Z}/\lfloor\log n\rfloor\mathbb{Z}$ with $n=43$.}
\label{dtorus}
\end{figure}
\\
Let $M$ be an $r\times r$ invertible matrix. We define the spectral or Epstein zeta function associated to the real torus $\mathbb{R}^r/M\mathbb{Z}^r$, for $\textrm{Re}(s)>r/2$ by
\begin{equation*}
\zeta_{\mathbb{R}^r/M\mathbb{Z}^r}(s)=\frac{1}{(2\pi)^{2s}}\sum_{k\in\mathbb{Z}^r\backslash\{0\}}(k^TM^{-1}k)^{-s}.
\end{equation*}
It has an analytic continuation to the whole complex plane except for a simple pole at $s=r/2$. The regularized determinant of the Laplacian on the real torus $\mathbb{R}^r/M\mathbb{Z}^r$ is then defined through the spectral zeta function evaluated at $s=0$ by
\begin{equation*}
\log\textrm{det}^\ast\Delta_{\mathbb{R}^r/M\mathbb{Z}^r}=-\zeta'_{\mathbb{R}^r/M\mathbb{Z}^r}(0).
\end{equation*}
From now on, the matrices $A$, $B$ and $\Lambda$ will denote:
\begin{equation*}
A=\textnormal{diag}(\alpha_1,\ldots,\alpha_p),\quad B=\textnormal{diag}(\beta_1,\ldots,\beta_{d-p}),\quad\Lambda=\textnormal{diag}(\alpha_1,\ldots,\alpha_p,\beta_1,\ldots,\beta_{d-p}).
\end{equation*}
We will prove the two following theorems.
\begin{theorem}
\label{ThDegT}
Let $\textnormal{det}^\ast\Delta_{\mathbb{Z}^d/\Lambda_n\mathbb{Z}^d}$ be the product of the non-zero eigenvalues of the Laplacian on the discrete torus $\mathbb{Z}^d/\Lambda_n\mathbb{Z}^d$. Then as $n\rightarrow\infty$
\begin{align*}
\log\textnormal{det}^\ast\Delta_{\mathbb{Z}^d/\Lambda_n\mathbb{Z}^d}&=n^{d-p}a_n^p\det(\Lambda)c_d-\left(\frac{n}{a_n}\right)^{d-p}\left(\det(\Lambda)(4\pi)^{d/2}\Gamma(d/2)\zeta_{\mathbb{R}^p/A^{-1}\mathbb{Z}^p}(d/2)+o(1)\right)
\end{align*}
where $c_d$ is the following integral
\begin{equation*}
c_d=\int_0^\infty\left(e^{-t}-e^{-2dt}I_0(2t)^d\right)\frac{dt}{t}.
\end{equation*}
\end{theorem}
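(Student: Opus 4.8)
The plan is to compute $\log\det{}^\ast\Delta$ through the heat trace and zeta regularization, following the strategy of Chinta--Jorgenson--Karlsson. Writing $0=\lambda_0<\lambda_1\leqslant\cdots\leqslant\lambda_{N-1}$ for the eigenvalues of $\Delta_{\mathbb{Z}^d/\Lambda_n\mathbb{Z}^d}$, where $N\defeq\det\Lambda_n=a_n^pn^{d-p}\det\Lambda$ is the number of vertices, and using the Frullani representation $\log\lambda=\int_0^\infty(e^{-t}-e^{-\lambda t})\frac{dt}{t}$, I obtain
\begin{equation*}
\log\det{}^\ast\Delta_{\mathbb{Z}^d/\Lambda_n\mathbb{Z}^d}=\int_0^\infty\Big((N-1)e^{-t}+1-\Theta_n(t)\Big)\frac{dt}{t},\qquad \Theta_n(t)\defeq\sum_{k=0}^{N-1}e^{-t\lambda_k}.
\end{equation*}
Since the torus is a product of cycles, the heat trace factorizes and the theta inversion formula (Proposition \ref{thetainv}) expresses each factor through $I$-Bessel functions:
\begin{equation*}
\Theta_n(t)=Ne^{-2dt}\prod_{i=1}^p\Big(\sum_{l\in\mathbb{Z}}I_{l\alpha_ia_n}(2t)\Big)\prod_{j=1}^{d-p}\Big(\sum_{m\in\mathbb{Z}}I_{m\beta_jn}(2t)\Big).
\end{equation*}
Retaining only the zero winding numbers defines $\Theta_n^0(t)\defeq Ne^{-2dt}I_0(2t)^d$, and $\int_0^\infty\big(Ne^{-t}-\Theta_n^0(t)\big)\frac{dt}{t}=Nc_d$ is exactly the leading term $n^{d-p}a_n^p\det(\Lambda)c_d$.

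It then remains to analyse the remainder
\begin{equation*}
\log\det{}^\ast\Delta-Nc_d=\int_0^\infty\Big((1-e^{-t})-\big(\Theta_n(t)-\Theta_n^0(t)\big)\Big)\frac{dt}{t}.
\end{equation*}
Expanding the products gives $\Theta_n(t)-\Theta_n^0(t)=Ne^{-2dt}\sum_{(l,m)\neq0}\prod_iI_{l_i\alpha_ia_n}(2t)\prod_jI_{m_j\beta_jn}(2t)$, and I claim the dominant contribution comes from the terms with $m=0$, $l\neq0$, that is, windings only in the $p$ slowly growing directions, so that
\begin{equation*}
\Theta_n(t)-\Theta_n^0(t)\sim Ne^{-2dt}I_0(2t)^{d-p}\sum_{l\in\mathbb{Z}^p\setminus\{0\}}\prod_{i=1}^pI_{l_i\alpha_ia_n}(2t).
\end{equation*}
The point is that, because $a_n/n\to0$, the relevant time scale is $t\asymp a_n^2=o(n^2)$, at which every winding term in a large direction carries a factor $\exp\!\big(-(m_j\beta_jn)^2/4t\big)=\exp\!\big(-O(n^2/a_n^2)\big)$ and is negligible after integration.

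On the scale $t\asymp a_n^2$ I pass to the Gaussian regime of the Bessel functions, $e^{-2t}I_\nu(2t)\sim(4\pi t)^{-1/2}\exp(-\nu^2/4t)$, which turns the summand into a Gaussian in which one recognizes the heat kernel on the real torus $\mathbb{R}^p/A^{-1}\mathbb{Z}^p$:
\begin{equation*}
\Theta_n(t)-\Theta_n^0(t)\sim\frac{N}{(4\pi t)^{d/2}}\sum_{l\in\mathbb{Z}^p\setminus\{0\}}\exp\!\Big(-\frac{a_n^2\,l^{T}A^2l}{4t}\Big).
\end{equation*}
Integrating term by term via $\int_0^\infty t^{-d/2-1}e^{-\mu^2/4t}\,dt=4^{d/2}\Gamma(d/2)\mu^{-d}$ and using $Na_n^{-d}=(n/a_n)^{d-p}\det\Lambda$ yields
\begin{equation*}
-\int_0^\infty\big(\Theta_n-\Theta_n^0\big)\frac{dt}{t}\sim-\Big(\frac{n}{a_n}\Big)^{d-p}\det(\Lambda)\,\frac{\Gamma(d/2)}{\pi^{d/2}}\sum_{l\in\mathbb{Z}^p\setminus\{0\}}(l^{T}A^2l)^{-d/2},
\end{equation*}
and since the lattice sum equals $(2\pi)^d\zeta_{\mathbb{R}^p/A^{-1}\mathbb{Z}^p}(d/2)$ (convergent because $d>p$), with $(2\pi)^d\pi^{-d/2}=(4\pi)^{d/2}$, this is precisely the announced second term.

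The main obstacle is the rigorous error control required to upgrade these asymptotic equivalences to the stated $o(1)$ inside the bracket, i.e. an error $o\big((n/a_n)^{d-p}\big)$. Concretely I must: (i) justify the Gaussian replacement of $e^{-2t}I_\nu(2t)$ uniformly over the integration range and quantify its error; (ii) bound the discarded pieces — windings in the large directions, mixed windings, and the small-$t$ region together with the $(1-e^{-t})$ term — showing each is $o\big((n/a_n)^{d-p}\big)$; and (iii) justify interchanging the lattice summation with the $t$-integration, which is where the non-standard Poisson summation and the limiting behaviour of the theta function enter to identify the Epstein zeta value cleanly. The balance $t\asymp a_n^2=o(n^2)$ is the mechanism that decouples the slow directions, which produce the spectral zeta function of the limiting $p$-dimensional continuous torus, from the fast directions, which feed only the bulk term $Nc_d$.
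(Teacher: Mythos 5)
Your proposal takes essentially the same route as the paper's proof: split off the zero-winding term to get $\det(\Lambda_n)c_d$, rescale $t\mapsto a_n^2t$ so that Proposition \ref{prop4.7} turns the slow-direction windings into the heat kernel of $\mathbb{R}^p/A^{-1}\mathbb{Z}^p$, and integrate termwise to obtain $(4\pi)^{d/2}\Gamma(d/2)\zeta_{\mathbb{R}^p/A^{-1}\mathbb{Z}^p}(d/2)$. The error control you defer is exactly what the paper supplies: dominated convergence via the uniform Bessel bounds of Lemma \ref{lemma4.6}, inequality (\ref{boundI0}) and Lemma \ref{lemma5.3}, together with an exact evaluation of the discarded fast-direction windings and the $(1-e^{-t})$ piece as $\mathcal{H}_{\Lambda_n}=2\log n-\zeta'_{\mathbb{R}^{d-p}/B\mathbb{Z}^{d-p}}(0)+o(1)$ (these live at scale $t\asymp n^2$ rather than being pointwise negligible), which is then absorbed into the $(n/a_n)^{d-p}\,o(1)$ error.
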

We recall the special values for the gamma function for odd $d$, $\Gamma(d/2)=(d-2)!!\sqrt{\pi}/2^{(d-1)/2}$, and for even $d$, $\Gamma(d/2)=(d/2-1)!$.
\begin{theorem}
\label{ThDegTcst}
Let $\textnormal{det}^\ast\Delta_{\mathbb{Z}^d/\Lambda_n^0\mathbb{Z}^d}$ be the product of the non-zero eigenvalues of the Laplacian on the discrete torus $\mathbb{Z}^d/\Lambda_n^0\mathbb{Z}^d$. Then as $n\rightarrow\infty$
\begin{align*}
\log\textnormal{det}^\ast\Delta_{\mathbb{Z}^d/\Lambda_n^0\mathbb{Z}^d}&=n^{d-p}\det(B)\sum_{j=0}^{\det(A)-1}\int_0^\infty\left(e^{-t}-I_0(2t)^{d-p}e^{-(2(d-p)+\lambda_j)t}\right)\frac{dt}{t}\\
&\ \ \ +2\log{n}-\zeta'_{\mathbb{R}^{d-p}/B\mathbb{Z}^{d-p}}(0)+o(1)
\end{align*}
where $\lambda_j$, $j=0,1,\ldots,\det(A)-1$, are the eigenvalues of the Laplacian on $\mathbb{Z}^p/A\mathbb{Z}^p$ given by
\begin{equation*}
\{\lambda_j\}_j=\{2p-2\sum_{i=1}^p\cos(2\pi j_i/\alpha_i):j_i=0,1,\ldots,\alpha_i-1,\textnormal{ for }i=1,\ldots,p\}.
\end{equation*}
\end{theorem}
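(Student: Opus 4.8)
The plan is to exploit the product structure of the torus. Since $\Lambda_n^0$ is diagonal and we take the standard generators, the graph $\mathbb{Z}^d/\Lambda_n^0\mathbb{Z}^d$ is the Cartesian product of the fixed torus $\mathbb{Z}^p/A\mathbb{Z}^p$ with the growing torus $T_n\defeq\mathbb{Z}^{d-p}/nB\mathbb{Z}^{d-p}$, so its Laplacian eigenvalues are exactly the sums $\lambda_j+\mu_k$, where $\{\lambda_j\}_{j=0}^{\det(A)-1}$ are the fixed eigenvalues on $\mathbb{Z}^p/A\mathbb{Z}^p$ and $\{\mu_k\}$ are the eigenvalues on $T_n$. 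First I would use the Frullani identity $\log x=\int_0^\infty(e^{-t}-e^{-xt})\,dt/t$ to turn the log of the product of nonzero eigenvalues into a heat-kernel integral. Grouping the eigenvalues $\lambda_j+\mu_k$ by the value of $j$ and isolating the single zero mode $(j,k)=(0,0)$ yields the decomposition
\[
\log\textnormal{det}^\ast\Delta_{\mathbb{Z}^d/\Lambda_n^0\mathbb{Z}^d}=\log\textnormal{det}^\ast\Delta_{T_n}+\sum_{j=1}^{\det(A)-1}\log\det\bigl(\Delta_{T_n}+\lambda_j\bigr).
\]
This reduces everything to the single growing torus $T_n$, which satisfies the hypothesis of \cite{chinta2010zeta} since $nB/(\det(nB))^{1/(d-p)}=B/(\det B)^{1/(d-p)}\in SL_{d-p}(\mathbb{R})$.

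For the leading term I would apply the theta inversion formula of Proposition \ref{thetainv} to each cyclic factor of $T_n$, writing the heat trace as $\Theta_{T_n}(t)=\prod_{i=1}^{d-p}\beta_i n\,e^{-2t}\sum_{\ell\in\mathbb{Z}}I_{\ell\beta_i n}(2t)$ and isolating the $\ell=0$ contribution $\beta_i n\,e^{-2t}I_0(2t)$, whose product is $n^{d-p}\det(B)\,e^{-2(d-p)t}I_0(2t)^{d-p}$. Inserting this into the heat-kernel integral $\log\det(\Delta_{T_n}+\lambda_j)=\int_0^\infty\bigl(|T_n|e^{-t}-e^{-\lambda_j t}\Theta_{T_n}(t)\bigr)dt/t$, with $|T_n|=n^{d-p}\det(B)$, produces exactly the claimed $j$-th summand $n^{d-p}\det(B)\int_0^\infty\bigl(e^{-t}-I_0(2t)^{d-p}e^{-(2(d-p)+\lambda_j)t}\bigr)dt/t$; the $j=0$ term arises from $\log\textnormal{det}^\ast\Delta_{T_n}$ with $\lambda_0=0$. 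The terms with $\ell\neq0$, each carrying a Bessel function $I_{\ell\beta_i n}(2t)$ of order growing linearly in $n$, must then be shown to be $o(1)$, using that $I_\nu(2t)$ is exponentially small in $\nu$ for bounded $t$ while the factors $e^{-\lambda_j t}$ and $e^{-2t}$ control the large-$t$ range.

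The subleading terms $2\log n-\zeta'_{\mathbb{R}^{d-p}/B\mathbb{Z}^{d-p}}(0)$ come solely from the $j=0$ summand. For $j\geq1$ the smallest eigenvalue $\lambda_j>0$ is bounded away from zero uniformly in $n$, so $e^{-\lambda_j t}$ kills the large-$t$ tail and no extra $\log n$ or constant survives. For $j=0$ the delicate region is large $t$, i.e. the eigenvalues $\mu_k$ of $T_n$ of size $O(1/n^2)$; after rescaling these are the Laplace eigenvalues of the real torus that $T_n$ approximates. Here the non-standard Poisson summation formula together with the limiting behaviour of the associated theta function identifies the large-$t$ part of the integral with $\log\textnormal{det}^\ast\Delta_{\mathbb{R}^{d-p}/nB\mathbb{Z}^{d-p}}=-\zeta'_{\mathbb{R}^{d-p}/nB\mathbb{Z}^{d-p}}(0)$. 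The homogeneity $\zeta_{\mathbb{R}^{d-p}/nB\mathbb{Z}^{d-p}}(s)=n^{2s}\zeta_{\mathbb{R}^{d-p}/B\mathbb{Z}^{d-p}}(s)$ (the eigenvalues scale by $n^{-2}$) together with the flat-torus value $\zeta_{\mathbb{R}^{d-p}/B\mathbb{Z}^{d-p}}(0)=-1$ then gives $-\zeta'_{\mathbb{R}^{d-p}/nB\mathbb{Z}^{d-p}}(0)=2\log n-\zeta'_{\mathbb{R}^{d-p}/B\mathbb{Z}^{d-p}}(0)$, which explains the universal coefficient $2$.

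The main obstacle is the uniform control of the passage from the discrete torus $T_n$ to its continuous limit in the large-$t$ regime: one must match the Bessel expansion valid for moderate $t$ with the theta-function and Epstein-zeta description valid for $t$ of order $n^2$, and show that both the transition region and the discarded $\ell\neq0$ Bessel terms contribute only $o(1)$. Establishing these estimates uniformly in $n$, and verifying that the finitely many shifted determinants with $j\geq1$ genuinely contribute nothing beyond their leading integrals, is the technical heart of the argument and is exactly where the method of \cite{chinta2010zeta} is invoked for the $j=0$ piece and adapted for the shifts.
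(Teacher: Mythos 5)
Your proposal is correct in outline, but it organizes the proof differently from the paper. The paper never splits the determinant: it applies its general identity (Theorem \ref{methDis}) to the full $d$-dimensional torus, obtains the lead term by the theta inversion formula (\ref{thetainvZ/mZ}) applied in the $p$ fixed directions (which is the same grouping by the eigenvalues $\lambda_j$ of $\mathbb{Z}^p/A\mathbb{Z}^p$ that you perform at the outset), and then re-derives the constant term from scratch by proving $\theta_{\Lambda_n}(n^2t)\rightarrow\Theta_{\mathbb{R}^{d-p}/B\mathbb{Z}^{d-p}}(t)$ and justifying the exchange of limits on $(0,1)$ and $(1,\infty)$ separately (Propositions \ref{convTheta}--\ref{convV}), finally assembling $\mathcal{H}_{\Lambda_n}=2\log n-\zeta'_{\mathbb{R}^{d-p}/B\mathbb{Z}^{d-p}}(0)+o(1)$ via (\ref{zeta'(0)}). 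You instead factor $\log\textnormal{det}^\ast\Delta=\log\textnormal{det}^\ast\Delta_{T_n}+\sum_{j\geqslant1}\log\det(\Delta_{T_n}+\lambda_j)$, outsource the $j=0$ piece to the non-degenerating theorem of \cite{chinta2010zeta} (legitimately, since $nB/(\det nB)^{1/(d-p)}$ is constant), recover the $2\log n$ via the homogeneity $\zeta_{\mathbb{R}^{d-p}/nB\mathbb{Z}^{d-p}}(s)=n^{2s}\zeta_{\mathbb{R}^{d-p}/B\mathbb{Z}^{d-p}}(s)$ and $\zeta(0)=-1$, and reduce the shifted determinants to their lead integrals plus $o(1)$. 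What your route buys is that the delicate large-$t$/zero-mode analysis is confined to the single citation for $T_n$, and for $j\geqslant1$ the damping factor $e^{-\lambda_jt}$ with $\lambda_j>0$ fixed trivializes the tail; what it still requires, and what you correctly flag as the technical heart, is the uniform-in-$n$ control of the $\ell\neq0$ Bessel terms $\beta_in\,e^{-2t}I_{\ell\beta_in}(2t)$ in the intermediate range $1\leqslant t\leqslant n$, for which you would need exactly the bound of Lemma \ref{lemma4.6} (the crude bound $I_\nu(z)\leqslant(z/2)^\nu e^z/\nu!$ is not sufficient there when $\beta_i$ is small). With that lemma invoked, your argument closes, and the two proofs agree term by term.
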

The second term in Theorem \ref{ThDegT} is new in the asymptotic development which comes from the degeneration. In Theorem \ref{ThDegTcst} the terms are similar to the usual ones appearing in the asymptotic behaviour of spectral determinants (see \cite{chinta2010zeta} and \cite{chinta2011complexity}). As mentioned above the last term is the logarithm of the spectral determinant of the Laplacian on the real torus $\mathbb{R}^{d-p}/B\mathbb{Z}^{d-p}$ where $p$ dimensions are lost because of the degeneration of the sequence of tori. Indeed one can rescale the discrete torus by dividing the number of vertices per dimension by $n$. Therefore the $d$-dimensional sequence of discrete tori converges in some sense to the $(d-p)$-dimensional real torus $\mathbb{R}^{d-p}/B\mathbb{Z}^{d-p}$.
\begin{example}
To illustrate Theorem \ref{ThDegT} we consider the graphs $\mathbb{Z}^3/\Lambda_n\mathbb{Z}^3$ where
\begin{equation*}
\Lambda_n=\left(\begin{array}{ccc}\lfloor\log n\rfloor&0&0\\0&n&0\\0&0&n\end{array}\right).
\end{equation*}
Then as $n\rightarrow\infty$
\begin{align*}
\log\textnormal{det}^\ast\Delta_{\mathbb{Z}^3/\Lambda_n\mathbb{Z}^3}&=c_3n^2\lfloor\log n\rfloor-\left(\frac{n}{\lfloor\log n\rfloor}\right)^2\left(\frac{1}{\pi}\zeta(3)+o(1)\right).
\end{align*}
\end{example}
This work is structured as follows. In subsection \ref{Laplacian} we define the combinatorial Laplacian, and then the spectral zeta function and the theta function in subsection \ref{zeta}. In subsection \ref{IB} we recall some results on modified $I$-Bessel functions and in the next subsection we define the $d$-dimensional modified $I$-Bessel function which will be used in the computation of the asymptotics for the circulant graphs. In the two next subsections we recall some upper bounds on modified $I$-Bessel functions and briefly describe the method used in \cite{chinta2010zeta}. In section \ref{asc} we show Theorem \ref{ThCirc} and compare the results with other papers. In section \ref{asdt} we treat the case of the degenerating sequence of tori, show Theorems \ref{ThDegT} and \ref{ThDegTcst} and give some examples. In the last section we formulate a conjecture on the number of spanning trees in $C_{5n}^{1,n}$, for $n\geqslant2$.
\par\vspace{\baselineskip}
\noindent
\textbf{Acknowledgements:} The author gratefully thanks Anders Karlsson for valuable discussions, comments and a careful reading of the manuscript. The author also thanks Fabien Friedli for useful discussions. The author is grateful to the referees for useful comments.
\section{Preliminary results}
\subsection{Laplacians}
\label{Laplacian}
We define a $d$-dimensional discrete torus to be the quotient $\mathbb{Z}^d/M\mathbb{Z}^d$ where $M\in GL_d(\mathbb{Z})$ and a $d$-dimensional real torus by the quotient $\mathbb{R}^d/C\mathbb{Z}^d$ where $C\in GL_d(\mathbb{R})$. Let $C^\ast$ be the matrix generating the dual lattice of $C\mathbb{Z}^d$ defined by
\begin{equation*}
C^\ast\mathbb{Z}^d=\{y\in\mathbb{R}^d\vert\langle x,y\rangle\in\mathbb{Z},\ \forall x\in C\mathbb{Z}^d\}
\end{equation*}
where $\langle\cdot,\cdot\rangle$ is the usual inner product, which satisfies the two following conditions:
\begin{align*}
&\circ\textrm{span}(C)=\textrm{span}(C^\ast)\\
&\circ C^TC^\ast=1.
\end{align*}
The eigenfunctions of the Laplace-Beltrami operator $-\sum_{j=1}^d\partial^2/\partial x_j^2$ on the real torus are given by $\phi(x)=\exp(2\pi i\langle\mu,x\rangle)$, for some $\mu\in\mathbb{R}^d$, with the condition that the opposite sides of the parallelogram generated by $C\mathbb{Z}^d$ are identified. So for all $x\in\mathbb{R}^d$ we have $\phi(x+C\mathbb{Z}^d)=\phi(x)$. Hence $\exp(2\pi i\langle\mu,C\mathbb{Z}^d\rangle)=1$ and therefore $\langle\mu,C\mathbb{Z}^d\rangle\in\mathbb{Z}$ if and only if $\mu=C^\ast m$ for $m\in\mathbb{Z}^d$. It follows that the eigenvalues are given by
\begin{equation}
\label{evRT}
\lambda_m=(2\pi)^2\mu^T\mu=(2\pi)^2\lVert C^\ast m\rVert^2\textrm{ with }m\in\mathbb{Z}^d.
\end{equation}
Let $V(\mathbb{Z}^d/M\mathbb{Z}^d)$ be the set of vertices of the torus $\mathbb{Z}^d/M\mathbb{Z}^d$ and $f:V(\mathbb{Z}^d/M\mathbb{Z}^d)\rightarrow\mathbb{C}$. The combinatorial Laplacian on $\mathbb{Z}^d/M\mathbb{Z}^d$ is defined by
\begin{equation*}
\Delta_{\mathbb{Z}^d/M\mathbb{Z}^d}f(x)=\sum_{y\sim x}(f(x)-f(y))
\end{equation*}
where the sum is over the vertices adjacent to $x$.\\
Recall Proposition $5$ of \cite{chinta2011complexity}:
\begin{proposition}
\label{thetainv}
Let $\lambda_v$, with $v\in M^\ast\mathbb{Z}^d/\mathbb{Z}^d$, be the eigenvalues of $\Delta_{\mathbb{Z}^d/M\mathbb{Z}^d}$. The following formula holds for $t\in\mathbb{R}_{\geqslant0}$
\begin{equation*}
\lvert\det(M)\rvert\sum_{y\in M\mathbb{Z}^d}e^{-2dt}I_{y_1}(2t)\ldots I_{y_d}(2t)=\sum_{v\in M^\ast\mathbb{Z}^d/\mathbb{Z}^d}e^{-t\lambda_v}
\end{equation*}
where $I_{y_i}$ is the modified $I$-Bessel function of order $y_i$.
\end{proposition}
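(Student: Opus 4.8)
The plan is to recognize both sides of the identity as the heat trace $\mathrm{Tr}(e^{-t\Delta_{\mathbb{Z}^d/M\mathbb{Z}^d}})$ evaluated in two ways: spectrally, which gives $\sum_v e^{-t\lambda_v}$, and geometrically via a periodized lattice heat kernel, which produces the Bessel sum. First I would establish the one-dimensional building block. The combinatorial Laplacian on $\mathbb{Z}$ is $\Delta = 2\,\mathrm{Id} - S - S^{-1}$, where $S$ is the unit shift; applying it to the character $x\mapsto e^{2\pi i\theta x}$ shows its Fourier symbol is $2-2\cos(2\pi\theta)$. Solving $\partial_t u = -\Delta u$ with $u(0)=\delta_0$ gives $\widehat u(t,\theta)=e^{-2t}e^{2t\cos(2\pi\theta)}$, and inverting with the integer-order integral representation $I_k(z)=\frac{1}{2\pi}\int_{-\pi}^\pi e^{z\cos w}e^{-ikw}\,dw$ identifies the heat kernel on $\mathbb{Z}$ from $0$ to $y$ as $e^{-2t}I_y(2t)$. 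Since the Laplacian on $\mathbb{Z}^d$ is the sum of the coordinate Laplacians, its heat kernel factorizes as $K_{\mathbb{Z}^d}(t;0,y)=e^{-2dt}\prod_{i=1}^d I_{y_i}(2t)$.

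Next I would pass to the torus by the method of images, writing $K_{\mathbb{Z}^d/M\mathbb{Z}^d}(t;0,\bar 0)=\sum_{y\in M\mathbb{Z}^d}K_{\mathbb{Z}^d}(t;0,y)$, so that the left-hand side of the proposition (divided by $\lvert\det M\rvert$) is exactly the torus heat kernel at coincident points. On the other hand, the torus is vertex-transitive and its orthonormal eigenfunctions are the normalized characters $\phi_v(x)=\lvert\det M\rvert^{-1/2}e^{2\pi i\langle v,x\rangle}$ for $v\in M^*\mathbb{Z}^d/\mathbb{Z}^d$, with $\lvert\phi_v(0)\rvert^2=1/\lvert\det M\rvert$, so the spectral expansion of the kernel gives $K_{\mathbb{Z}^d/M\mathbb{Z}^d}(t;0,\bar 0)=\frac{1}{\lvert\det M\rvert}\sum_v e^{-t\lambda_v}$. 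Equating the two expressions for the kernel and multiplying through by $\lvert\det M\rvert$ yields the stated formula. Equivalently, and this is the mechanism underlying the identity, one applies discrete Poisson summation to the function $y\mapsto\prod_i I_{y_i}(2t)$ over the finite-index sublattice $M\mathbb{Z}^d\subseteq\mathbb{Z}^d$: its discrete Fourier transform is $\prod_i\sum_{k}I_k(2t)e^{2\pi i k\theta_i}=\prod_i e^{2t\cos(2\pi\theta_i)}$ by the Jacobi--Anger generating function $\sum_k I_k(2t)z^k=e^{t(z+z^{-1})}$, the dual of $M\mathbb{Z}^d$ consists of those $v$ with $M^Tv\in\mathbb{Z}^d$, i.e. $v\in M^*\mathbb{Z}^d/\mathbb{Z}^d$, and the index equals $\lvert\det M\rvert$, so summing the transform over the dual reproduces $\sum_v e^{-t\lambda_v}$ with $\lambda_v=2d-2\sum_i\cos(2\pi v_i)$.

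The hard part will be justifying the periodization/Poisson step rather than any of the Bessel algebra: one must check that $\sum_{y\in M\mathbb{Z}^d}\prod_i I_{y_i}(2t)$ converges absolutely for each fixed $t\geqslant 0$, so that the interchange of summations is legitimate and the torus heat kernel genuinely coincides with the periodized lattice kernel. This follows from the super-exponential decay of the modified Bessel functions in their order, $I_n(2t)\sim(et/n)^{n}/\sqrt{2\pi n}$ as $\lvert n\rvert\to\infty$, which makes each coordinate slice summable and the convergence uniform on compact $t$-intervals. The remaining ingredients, namely the integer-order integral representation of $I_k$, the generating function above, and the explicit computation of the eigenvalues $\lambda_v$ on the standard generators, are standard and I would treat them as routine.
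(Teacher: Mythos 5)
Your argument is correct: the heat kernel on $\mathbb{Z}$ is $e^{-2t}I_y(2t)$, the $d$-dimensional kernel factorizes, and periodization over $M\mathbb{Z}^d$ compared with the spectral expansion (equivalently, discrete Poisson summation over the index-$\lvert\det M\rvert$ sublattice together with the generating function $\sum_k I_k(2t)z^k=e^{t(z+z^{-1})}$) yields the identity, with absolute convergence guaranteed by the super-exponential decay of $I_n(2t)$ in $n$. The paper itself gives no proof of this proposition --- it is recalled verbatim as Proposition 5 of \cite{chinta2011complexity} --- and your derivation is essentially the standard theta-inversion argument given in that reference.
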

\subsection{Spectral zeta function and theta function}
\label{zeta}
In this section we define the spectral zeta function and the theta function and give the relations that will enable us to compute the asymptotics in sections \ref{asc} and \ref{asdt}.\\
Let $\{\lambda_j\}_{j\geqslant0}$ be the eigenvalues of the combinatorial Laplacian, respectively the Laplace-Beltrami operator, on a discrete torus, respectively a real torus, T, with $\lambda_0=0$. The associated theta function on $T$ is defined by
\begin{equation}
\label{theta}
\sum_je^{-\lambda_jt}.
\end{equation}
It will be denoted by $\theta_T(t)$ when $T$ denotes a discrete torus and by $\Theta_T(t)$ when $T$ denotes a real torus. The relation in Proposition \ref{thetainv} is then called the theta inversion formula on $\mathbb{Z}^d/M\mathbb{Z}^d$. The associated spectral zeta function on a real torus $T$ is defined for $\textrm{Re}(s)>d/2$ by
\begin{equation*}
\zeta_T(s)=\sum_{j\neq0}\frac{1}{\lambda_j^s}.
\end{equation*}
It is related to the theta function through the Mellin transform:
\begin{equation*}
\zeta_T(s)=\frac{1}{\Gamma(s)}\int_0^\infty(\Theta_T(t)-1)t^s\frac{dt}{t}
\end{equation*}
where the $-1$ in the integral comes from the fact that the zero eigenvalue is kept in the definition of the theta function, and where $\Gamma(s)=\int_0^\infty e^{-t}t^sdt/t$ is the gamma function.\\
Let $M\in GL_d(\mathbb{R})$ be a matrix. By splitting the above integral one can show that the zeta function admits a meromorphic continuation to $s\in\mathbb{C}$ (see section $2.6$ in \cite{chinta2010zeta}). By differentiating $\zeta_{\mathbb{R}^d/M\mathbb{Z}^d}$ and evaluating at $s=0$, one has
\begin{align}
\label{zeta'(0)}
\zeta'_{\mathbb{R}^d/M\mathbb{Z}^d}(0)&=\int_0^1(\Theta_{\mathbb{R}^d/M\mathbb{Z}^d}(t)-\lvert\textrm{det}(M)\rvert(4\pi t)^{-d/2})\frac{dt}{t}+\Gamma'(1)\nonumber\\
&\ \ \ -\frac{2}{d}\lvert\textrm{det}(M)\rvert(4\pi)^{-d/2}+\int_1^\infty(\Theta_{\mathbb{R}^d/M\mathbb{Z}^d}(t)-1)\frac{dt}{t}.
\end{align}
In section \ref{asc} a limiting torus will be the circle $S^1=\mathbb{R}/\mathbb{Z}$. In this case it is convenient to split the integral at $c_\Gamma$. The spectral zeta function is defined for $\textrm{Re}(s)>1/2$:
\begin{align*}
\zeta_{S^1}(s)&=\frac{1}{\Gamma(s)}\int_0^\infty(\Theta_{S^1}(t)-1)t^s\frac{dt}{t}\\
&=\frac{1}{\Gamma(s)}\int_0^{c_\Gamma}\left(\Theta_{S^1}(t)-\frac{1}{\sqrt{4\pi t}}\right)t^s\frac{dt}{t}+\frac{1}{\Gamma(s)}\int_0^{c_\Gamma}\left(\frac{1}{\sqrt{4\pi t}}-1\right)t^s\frac{dt}{t}\\
&\ \ \ +\frac{1}{\Gamma(s)}\int_{c_\Gamma}^\infty(\Theta_{S^1}(t)-1)t^s\frac{dt}{t}\\
&=\frac{1}{\Gamma(s)}\int_0^{c_\Gamma}\left(\Theta_{S^1}(t)-\frac{1}{\sqrt{4\pi t}}\right)t^s\frac{dt}{t}+\frac{1}{\Gamma(s)}\left(\frac{c_\Gamma^{s-1/2}}{\sqrt{4\pi}(s-1/2)}-\frac{c_\Gamma^s}{s}\right)\\
&\ \ \ +\frac{1}{\Gamma(s)}\int_{c_\Gamma}^\infty(\Theta_{S^1}(t)-1)t^s\frac{dt}{t}.
\end{align*}
This defines a meromorphic continuation of $\zeta_{S^1}$ to the whole complex plane, hence the limit of $\zeta_{S^1}(s)$ at $s=0$ exists. Near $s=0$ the gamma function behaves as $1/\Gamma(s)=s+O(s^2)$. Therefore
\begin{equation}
\label{zetaS1}
\zeta'_{S^1}(0)=\int_0^{c_\Gamma}\left(\Theta_{S^1}(t)-\frac{1}{\sqrt{4\pi t}}\right)\frac{dt}{t}-\frac{1}{\sqrt{\pi c_\Gamma}}-\log c_\Gamma+\Gamma'(1)+\int_{c_\Gamma}^\infty(\Theta_{S^1}(t)-1)\frac{dt}{t}.
\end{equation}
As mentioned in the introduction, we notice that for a real torus $T$ the regularized determinant of the Laplacian, $\textrm{det}^\ast\Delta_T$, is defined by the following identity (for more details see \cite{voros1987spectral}):
\begin{equation*}
\log\textrm{det}^\ast\Delta_T=-\zeta'_T(0).
\end{equation*}
Let $s\in\mathbb{C}$ with $\textrm{Re}(s)>d/2$, and $M=\textrm{diag}(m_1,\ldots,m_d)$ be a positive diagonal matrix. Using (\ref{evRT}), the zeta function can be rewritten as
\begin{equation}
\label{zeta2}
\zeta_{\mathbb{R}^d/M\mathbb{Z}^d}(s)=\frac{1}{(4\pi^2)^s}\sum_{(k_1,\ldots,k_d)\in\mathbb{Z}^d\backslash\{0\}}\frac{1}{\big(\sum_{i=1}^dk_i^2/m_i^2\big)^s}.
\end{equation}
Let $\zeta$ be the Riemann zeta function. In the case of the circle $\mathbb{R}/\beta\mathbb{Z}$ the eigenvalues of the Laplacian are given by $\lambda_j=(2\pi)^2(j/\beta)^2$ for $j\in\mathbb{Z}$, so the spectral zeta function is related to the Riemann zeta function by
\begin{equation*}
\zeta_{\mathbb{R}/\beta\mathbb{Z}}(s)=2(\beta/2\pi)^{2s}\zeta(2s).
\end{equation*}
Using the special values of the Riemann zeta function $\zeta(0)=-1/2$ and $\zeta'(0)=-(1/2)\log(2\pi)$, the derivative evaluated at zero is given by
\begin{equation}
\label{zeta'}
\zeta'_{\mathbb{R}/\beta\mathbb{Z}}(0)=4\log(\beta/2\pi)\zeta(0)+4\zeta'(0)=-2\log\beta.
\end{equation}
In particular for the unit circle $S^1=\mathbb{R}/\mathbb{Z}$, one has
\begin{equation}
\label{zeta'_S1(0)}
\zeta'_{S^1}(0)=0.
\end{equation}
\subsection{\texorpdfstring{Modified \boldmath$I$-Bessel functions\unboldmath\ }{Modified I-Bessel function}}
\label{IB}
Let $I_x$ be the modified $I$-Bessel function of the first kind of index $x$.
For positive integer values of $x$, $I_x(t)$ has the following series representation
\begin{equation}
\label{seriesrepI}
I_x(t)=\sum_{n=0}^\infty\frac{(t/2)^{2n+x}}{n!\Gamma(n+1+x)}
\end{equation}
and the integral representation
\begin{equation*}
I_x(t)=\frac{1}{2\pi}\int_{-\pi}^\pi e^{t\cos\theta}\cos(\theta x)d\theta.
\end{equation*}
For negative values of $x$ we have that $I_x(t)=I_{-x}(t)$ for all $t$.\\
From Theorem $9$ in \cite{karlsson2006heat} which is a special case of Proposition \ref{thetainv}, we have the theta inversion formula on $\mathbb{Z}/m\mathbb{Z}$, that is, for every integer $m>0$ and all $t$,
\begin{equation}
\label{thetainvZ/mZ}
e^{-t}\sum_{k\in\mathbb{Z}}I_{km}(t)=\frac{1}{m}\sum_{j=0}^{m-1}e^{-(1-\cos(2\pi j/m))t}.
\end{equation}
The two following propositions give some results on the asymptotics of the $I$-Bessel function. The first result has been proved in \cite{chinta2010zeta}.
\begin{proposition}
\label{prop4.7}
Let $b(n)$ be a sequence of positive integers parametrized by $n\in\mathbb{N}$ such that $b(n)/n\rightarrow\beta>0$ as $n\rightarrow\infty$. Then for any $t>0$ and non-negative integer $k\geqslant0$, we have
\begin{equation*}
\lim_{n\rightarrow\infty}b(n)e^{-2n^2t}I_{b(n)k}(2n^2t)=\frac{\beta}{\sqrt{4\pi t}}e^{-(\beta k)^2/(4t)}.
\end{equation*}
\end{proposition}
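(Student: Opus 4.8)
The plan is to start from the exact integral representation of the modified $I$-Bessel function of integer order recalled above, namely $I_x(t)=\frac{1}{2\pi}\int_{-\pi}^\pi e^{t\cos\theta}\cos(\theta x)\,d\theta$, applied with $x=b(n)k$ and $t=2n^2t$. Multiplying by $b(n)e^{-2n^2t}$ and absorbing the factor $e^{-2n^2t}$ into the integrand gives
\[
b(n)e^{-2n^2t}I_{b(n)k}(2n^2t)=\frac{b(n)}{2\pi}\int_{-\pi}^\pi e^{2n^2t(\cos\theta-1)}\cos(b(n)k\theta)\,d\theta.
\]
The weight $e^{2n^2t(\cos\theta-1)}$ concentrates near $\theta=0$ as $n\to\infty$, so the natural move is to rescale $\theta=w/n$. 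After this substitution the prefactor becomes $b(n)/(2\pi n)\to\beta/(2\pi)$, the order term converges since $b(n)k\theta=(b(n)/n)\,k\,w\to\beta k w$, and the exponent obeys $2n^2t(\cos(w/n)-1)\to -tw^2$ by the Taylor expansion $\cos x-1=-x^2/2+O(x^4)$, while the domain of integration expands to all of $\mathbb{R}$.

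Next I would make this limit rigorous through the dominated convergence theorem. Writing the rescaled integral over $\mathbb{R}$ with the integrand extended by zero outside $[-\pi n,\pi n]$, the pointwise convergence of the integrand to $e^{-tw^2}\cos(\beta k w)$ is immediate from the previous step. The estimate needed for domination is the elementary inequality $1-\cos x\geqslant \frac{2}{\pi^2}x^2$, valid for $|x|\leqslant\pi$; applying it with $x=w/n$ yields $2n^2t(\cos(w/n)-1)\leqslant -\frac{4t}{\pi^2}w^2$, so the modulus of the integrand is bounded uniformly in $n$ by the integrable function $e^{-4tw^2/\pi^2}$. This justifies passing the limit inside the integral, giving
\[
\lim_{n\to\infty}b(n)e^{-2n^2t}I_{b(n)k}(2n^2t)=\frac{\beta}{2\pi}\int_{-\infty}^\infty e^{-tw^2}\cos(\beta k w)\,dw.
\]

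Finally I would evaluate the Gaussian integral via $\int_{-\infty}^\infty e^{-tw^2}\cos(aw)\,dw=\sqrt{\pi/t}\,e^{-a^2/(4t)}$ with $a=\beta k$, which produces $\frac{\beta}{2\pi}\sqrt{\pi/t}\,e^{-(\beta k)^2/(4t)}=\frac{\beta}{\sqrt{4\pi t}}e^{-(\beta k)^2/(4t)}$, as claimed. The only genuinely delicate point is the uniform domination: one must control the integrand on the whole interval $[-\pi,\pi]$ and not merely near the peak $\theta=0$, so that the contributions away from the peak are provably negligible, and the convexity-type inequality for $1-\cos x$ is exactly what furnishes a bound that is simultaneously $n$-independent and integrable after rescaling. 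Everything else is a routine Laplace-type concentration argument.
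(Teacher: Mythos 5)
Your argument is correct. Note first that the paper does not actually prove this proposition: it is quoted from \cite{chinta2010zeta} (``The first result has been proved in \cite{chinta2010zeta}''), so there is no internal proof to compare against. Your route --- integral representation of $I_{b(n)k}(2n^2t)$, rescaling $\theta=w/n$, the convexity bound $1-\cos x\geqslant\frac{2}{\pi^2}x^2$ on $[-\pi,\pi]$ to get an $n$-independent Gaussian dominating function, dominated convergence, and the explicit Gaussian--cosine integral --- is a clean, self-contained proof, and it is in fact exactly the technique the author deploys for the $d$-dimensional generalization in the proof of Proposition \ref{thetalimit} (there with the cruder constant $c=1/2-\pi^2/24$ in place of your sharp $2/\pi^2$, and with $e^{iw}$ in place of $\cos(\beta k w)$ after using the evenness of the integrand). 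All the steps check out: the prefactor $b(n)/(2\pi n)$ is bounded since it converges to $\beta/(2\pi)$, the order term $b(n)kw/n\to\beta kw$ uses only $b(n)/n\to\beta$, and the pointwise limit of the exponent follows from the Taylor expansion of $\cos$. The only cosmetic issue is the clash of the two meanings of $t$ when you substitute into $I_x(t)$; a different dummy variable would avoid any ambiguity. In short, your proposal supplies a proof the paper omits, by the same method the paper itself uses one level up.
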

\begin{proposition}
\label{prop4.7bis}
Let $a_n$ be a sequence of positive integers tending to infinity sublinearly with respect to $n$. Then we have that
\begin{equation*}
\label{lim1}
\lim_{n\rightarrow\infty}a_ne^{-2n^2t}\sum_{k\in\mathbb{Z}}I_{a_nk}(2n^2t)=1.
\end{equation*}
\end{proposition}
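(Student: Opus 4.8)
The plan is to reduce the infinite Bessel sum to a finite trigonometric sum by the theta inversion formula, and then estimate that sum directly. Applying (\ref{thetainvZ/mZ}) with $m=a_n$ and with $2n^2t$ in place of $t$, and then multiplying through by $a_n$, produces the exact identity
\[
a_ne^{-2n^2t}\sum_{k\in\mathbb{Z}}I_{a_nk}(2n^2t)=\sum_{j=0}^{a_n-1}e^{-2n^2t(1-\cos(2\pi j/a_n))}.
\]
So it is enough to prove that the right-hand side converges to $1$ as $n\to\infty$ (here $t>0$ is fixed).

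The $j=0$ term equals $1$ exactly, so I would isolate it and show that the remaining sum over $1\leq j\leq a_n-1$ tends to $0$. For these indices $1-\cos(2\pi j/a_n)>0$, and the right tool is a quadratic lower bound. Using the elementary inequality $1-\cos x\geq\tfrac{2}{\pi^2}x^2$ for $x\in[0,\pi]$ together with the symmetry $\cos(2\pi j/a_n)=\cos(2\pi(a_n-j)/a_n)$, which lets me restrict to $1\leq j\leq a_n/2$ at the cost of a factor $2$, each summand is bounded by $e^{-16t(n/a_n)^2j^2}$. Writing $c_n=n/a_n$ this gives
\[
0\leq\sum_{j=1}^{a_n-1}e^{-2n^2t(1-\cos(2\pi j/a_n))}\leq 2\sum_{j=1}^\infty e^{-16tc_n^2j^2}\leq\frac{2e^{-16tc_n^2}}{1-e^{-16tc_n^2}},
\]
where the last inequality uses $j^2\geq j$ to sum a geometric series. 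Since $a_n$ grows sublinearly we have $c_n\to\infty$, hence $e^{-16tc_n^2}\to0$ and the upper bound vanishes, which finishes the argument.

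The delicate point is precisely this off-diagonal estimate, and it is the reason the statement needs a dedicated proof rather than an appeal to Proposition \ref{prop4.7}. One must not bound all $a_n-1$ terms by the largest one (the $j=1$ term) times the number of terms: that cruder estimate gives only $a_ne^{-16tc_n^2}$, whose logarithm $\log a_n-16tc_n^2$ need not tend to $-\infty$ for slowly growing sequences such as $a_n\sim n/\sqrt{\log n}$ when $t$ is small. Keeping the genuine $j^2$ decay and summing the individual bounds removes the spurious factor $a_n$ and makes the estimate uniform in the growth rate of $a_n$. By contrast, Proposition \ref{prop4.7} is of no use here: formally taking $\beta=0$ forces each summand $a_ne^{-2n^2t}I_{a_nk}(2n^2t)$ to $0$, so the total mass $1$ is not visible term by term and only the theta inversion formula recovers it.
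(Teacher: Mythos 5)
Your proof is correct and follows essentially the same route as the paper's: both reduce the Bessel sum to the finite exponential sum via the theta inversion formula on $\mathbb{Z}/a_n\mathbb{Z}$, isolate the $j=0$ term, use symmetry and a quadratic lower bound on $1-\cos$ (equivalently $\sin x\geqslant x/2$) to dominate the remaining terms, and then sum a geometric series via $j^2\geqslant j$ to conclude from $n/a_n\rightarrow\infty$. The only differences are cosmetic (choice of constant in the trigonometric inequality and the paper's explicit even/odd case split).
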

\begin{proof}
From the theta inversion formula on $\mathbb{Z}$,
\begin{equation*}
a_ne^{-2n^2t}\sum_{k\in\mathbb{Z}}I_{a_nk}(2n^2t)=1+\sum_{j=1}^{a_n-1}e^{-4\sin^2(\pi j/a_n)n^2t}.
\end{equation*}
If $a_n$ is even,
\begin{equation*}
\sum_{j=1}^{a_n-1}e^{-4\sin^2(\pi j/a_n)n^2t}=e^{-4n^2t}+2\sum_{j=1}^{a_n/2-1}e^{-4\sin^2(\pi j/a_n)n^2t}.
\end{equation*}
If $a_n$ is odd,
\begin{equation*}
\sum_{j=1}^{a_n-1}e^{-4\sin^2(\pi j/a_n)n^2t}=2\sum_{j=1}^{(a_n-1)/2}e^{-4\sin^2(\pi j/a_n)n^2t}.
\end{equation*}
Since $e^{-4n^2t}\rightarrow0$ as $n\rightarrow\infty$ both cases behave the same, so we only treat the case where $a_n$ is odd. Using the fact that $\sin x\geqslant x/2$ for all $x\in[0,\pi/2]$, we have
\begin{align*}
\sum_{j=1}^{(a_n-1)/2}e^{-4\sin^2(\pi j/a_n)n^2t}&\leqslant\sum_{j=1}^{(a_n-1)/2}e^{-\pi^2j^2tn^2/a_n^2}\\
&\leqslant\sum_{j=1}^\infty e^{-\pi^2jtn^2/a_n^2}=\frac{1}{e^{\pi^2tn^2/a_n^2}-1}\rightarrow0
\end{align*}
since $n/a_n\rightarrow\infty$ as $n\rightarrow\infty$.
\end{proof}
\begin{proposition}
\label{intI0}
For all $x\geqslant2$,
\begin{equation*}
\int_0^\infty\left(e^{-t}-e^{-xt}I_0(2t)\right)\frac{dt}{t}=\argcosh(x/2).
\end{equation*}
\end{proposition}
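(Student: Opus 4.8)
The plan is to regard the left-hand side as a function $F(x)$ of the parameter $x$, differentiate under the integral sign, and recognise the resulting Laplace transform. First I would check that the integral converges for every $x\geqslant2$: near $t=0$ the series representation (\ref{seriesrepI}) gives $I_0(2t)=1+t^2+O(t^4)$, so $e^{-t}-e^{-xt}I_0(2t)=(x-1)t+O(t^2)$ and the integrand $\bigl(e^{-t}-e^{-xt}I_0(2t)\bigr)/t$ stays bounded; near $t=\infty$ one uses the classical asymptotic $I_0(2t)\sim e^{2t}/\sqrt{4\pi t}$, so the integrand decays like $e^{-(x-2)t}t^{-3/2}$, which is integrable for all $x\geqslant2$ (exponentially for $x>2$, and like $t^{-3/2}$ at the threshold $x=2$).

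Set $F(x)\defeq\int_0^\infty\bigl(e^{-t}-e^{-xt}I_0(2t)\bigr)\,dt/t$. Differentiating under the integral sign removes the factor $1/t$ and yields $F'(x)=\int_0^\infty e^{-xt}I_0(2t)\,dt$, the Laplace transform of $I_0(2t)$. I would evaluate it by inserting the series (\ref{seriesrepI}), $I_0(2t)=\sum_{n\geqslant0}t^{2n}/(n!)^2$, integrating term by term via $\int_0^\infty e^{-xt}t^{2n}\,dt=(2n)!/x^{2n+1}$, and summing the resulting binomial series $\tfrac1x\sum_{n\geqslant0}\binom{2n}{n}x^{-2n}=(x^2-4)^{-1/2}$, valid for $x>2$. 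Since $\tfrac{d}{dx}\argcosh(x/2)=(x^2-4)^{-1/2}$ as well, the two functions differ by a constant: $F(x)=\argcosh(x/2)+C$.

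It remains to fix $C$, which I would do by letting $x\to\infty$. Splitting $F(x)=\int_0^\infty(e^{-t}-e^{-xt})\,dt/t+\int_0^\infty e^{-xt}\bigl(1-I_0(2t)\bigr)\,dt/t$, the first integral is a Frullani integral equal to $\log x$, while the second tends to $0$ by dominated convergence as $x\to\infty$. Comparing with the expansion $\argcosh(x/2)=\log x+o(1)$ forces $C=0$, and continuity of both sides at $x=2$ (with $\argcosh(1)=0$) extends the identity to the boundary.

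The main obstacle is one of justification rather than discovery: one must legitimise the differentiation under the integral sign and the termwise Laplace transform (both handled by uniform local bounds in $x$ on the tails) and confirm the dominated-convergence step used to kill the constant. An alternative, more self-contained route avoids the constant altogether: writing $e^{-t}-e^{-xt}I_0(2t)=\tfrac1{2\pi}\int_{-\pi}^\pi\bigl(e^{-t}-e^{-(x-2\cos\theta)t}\bigr)\,d\theta$ via the integral representation of $I_0$, interchanging the $\theta$- and $t$-integrals, and applying Frullani inside gives $F(x)=\tfrac1{2\pi}\int_{-\pi}^\pi\log(x-2\cos\theta)\,d\theta$, which equals $\argcosh(x/2)$ by the classical logarithmic integral; here the delicate point is justifying the interchange, since the inner integrand changes sign.
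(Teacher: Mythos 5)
Your proof is correct, but it takes a genuinely different route from the paper's. The paper attacks the integral head-on: it expands $I_0(2t)$ in its power series, integrates term by term to get $\int_0^\infty e^{-xt}(I_0(2t)-1)\,\frac{dt}{t}=\sum_{n\geqslant1}\frac{(2n-1)!}{(n!)^2}x^{-2n}$, and then sums this series in closed form by a manipulation of the Catalan-number generating function (integrating $\sum_{n\geqslant 0}C_ny^n=2/(1+\sqrt{1-4y})$, fixing the constant of integration $1-\log 2$, and forming the combination $2yg'(y)-g(y)$); the Frullani identity $\int_0^\infty(e^{-t}-e^{-xt})\,\frac{dt}{t}=\log x$ then finishes the computation. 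You instead differentiate in the parameter $x$, which replaces that awkward series by the plain central-binomial series $\frac1x\sum_{n\geqslant 0}\binom{2n}{n}x^{-2n}=(x^2-4)^{-1/2}$, i.e.\ the standard Laplace transform of $I_0(2t)$, match derivatives with $\frac{d}{dx}\argcosh(x/2)$, and recover the additive constant from the same Frullani identity via the $x\to\infty$ asymptotics. The two arguments are cousins --- both ultimately rest on the generating function of the central binomial/Catalan numbers and on Frullani --- but yours trades the paper's explicit antiderivative computation for the (routine) justifications of differentiation under the integral sign and of the limit fixing $C$; the local-uniform domination you sketch from the behaviour at $t\to0$ and $t\to\infty$ is exactly what is needed, and the extension to the endpoint $x=2$ by dominated convergence is fine since $e^{-2t}I_0(2t)/t\sim t^{-3/2}/\sqrt{4\pi}$ is integrable at infinity. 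Your alternative route through $F(x)=\frac1{2\pi}\int_{-\pi}^\pi\log(x-2\cos\theta)\,d\theta$ is also sound and is essentially the computation the paper performs later for the $d$-dimensional Bessel function when it rewrites $\mathcal{I}_d^\Gamma$ as a logarithmic integral.
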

\begin{proof}
Setting $x=0$ in (\ref{seriesrepI}), we have
\begin{equation*}
I_0(2t)=\sum_{n\geqslant0}\frac{t^{2n}}{(n!)^2}.
\end{equation*}
It follows
\begin{align*}
\int_0^\infty e^{-xt}(I_0(2t)-1)\frac{dt}{t}&=\int_0^\infty e^{-xt}\sum_{n\geqslant1}\frac{t^{2n}}{(n!)^2}\frac{dt}{t}\\
&=\sum_{n\geqslant1}\frac{(2n-1)!}{(n!)^2}\frac{1}{x^{2n}}.
\end{align*}
Let $y=1/x^2$ with $y\leqslant1/4$, so the above is equivalent to the following sum $\sum_{n\geqslant1}y^n(2n-1)!/(n!)^2$.\\
Let $C_n=C_{2n}^n/(n+1)=(2n)!/(n+1)!n!$ be the Catalan numbers, $n\geqslant0$, where $C_m^n=m!/n!(m-n)!$ is the binomial coefficient. The generating function of the Catalan numbers is given by
\begin{equation}
\label{catalan}
\sum_{n\geqslant0}C_ny^n=\frac{2}{1+\sqrt{1-4y}}.
\end{equation}
The integration over $y$ of the above leads to
\begin{equation*}
\sum_{n\geqslant0}\frac{C_n}{n+1}y^{n+1}=\log(1+\sqrt{1-4y})-\sqrt{1-4y}+\textrm{constant}.
\end{equation*}
Taking the limit $y\rightarrow0$ on both sides gives the $\textrm{constant}=1-\log2$. Hence,
\begin{align*}
\sum_{n\geqslant0}\frac{C_n}{n+1}y^{n+1}&=y+\sum_{n\geqslant2}\frac{(2n-2)!}{(n!)^2}y^n\\
&=\log(1+\sqrt{1-4y})-\sqrt{1-4y}+1-\log2.
\end{align*}
Let $\alpha_n=C_{n-1}/n=(2n-2)!/(n!)^2$, $n\geqslant2$, and $\alpha_1=1$, and let $g(y)=\log(1+\sqrt{1-4y})-\sqrt{1-4y}+1-\log2$. So the previous equation can be written as
\begin{equation*}
\sum_{n\geqslant1}\alpha_ny^n=g(y).
\end{equation*}
So (\ref{catalan}) is equivalent to
\begin{equation*}
\sum_{n\geqslant1}n\alpha_ny^{n-1}=g'(y).
\end{equation*}
Finally,
\begin{align*}
\sum_{n\geqslant1}\frac{(2n-1)!}{(n!)^2}y^n&=\sum_{n\geqslant1}(2n-1)\alpha_ny^n\\
&=2y\sum_{n\geqslant1}n\alpha_ny^{n-1}-\sum_{n\geqslant1}\alpha_ny^n\\
&=2yg'(y)-g(y)\\
&=\log\left(\frac{2}{1+\sqrt{1-4y}}\right).
\end{align*}
Writing the above in terms of $x$ gives for all $x\geqslant2$,
\begin{equation*}
\int_0^\infty e^{-xt}(I_0(2t)-1)\frac{dt}{t}=\log\frac{x}{2}+\log(x-\sqrt{x^2-4}).
\end{equation*}
Notice that the above is the generating function of the Catalan numbers, and therefore is equal to $\log(\sum_{n\geqslant0}C_nx^{-2n})$.\\
Using the following integral identity for all $x\in\mathbb{C}$ with $\textrm{Re}(x)>0$
\begin{equation*}
\int_0^\infty\left(e^{-t}-e^{-xt}\right)\frac{dt}{t}=\log x
\end{equation*}
one has
\begin{equation*}
\int_0^\infty\left(e^{-t}-e^{-xt}I_0(2t)\right)\frac{dt}{t}=\log\left(\frac{x+\sqrt{x^2-4}}{2}\right)=\argcosh(x/2).
\end{equation*}
\end{proof}
\subsection{\texorpdfstring{\boldmath$d$-dimensional modified $I$-Bessel function\unboldmath\ }{d-dimensional modified I-Bessel function}}
\label{ddimB}
Let $m, p_1,\ldots,p_d$ be positive integers. By analogy with the two-dimensional $J$-Bessel function defined in \cite{korsch2006two} we define the $d$-dimensional modified $I$-Bessel function of order $m$, $I_m^{p_1,\ldots,p_d}(u_1,\ldots,\allowbreak u_d)$, as the generating function of $e^{\sum_{i=1}^du_i\cos{p_it}}$, that is
\begin{equation*}
e^{\sum_{i=1}^du_i\cos{p_it}}=\sum_{m=-\infty}^\infty I_m^{p_1,\ldots,p_d}(u_1,\ldots,u_d)e^{imt}.
\end{equation*}
In our computation we will only need $u_1=\ldots=u_d=2n^2t$ so we set $u_1=\ldots=u_d=u$. We have
\begin{equation*}
I_m^{p_1,\ldots,p_d}(u,\ldots,u)=\frac{1}{2\pi}\int_{-\pi}^\pi\sum_{(\mu_1,\ldots,\mu_d)\in\mathbb{Z}^d}\prod_{i=1}^dI_{\mu_i}(u)e^{i\left(\sum_{i=1}^d\mu_ip_i-m\right)t}dt.
\end{equation*}
The integral is non-zero only for $\displaystyle\sum_{i=1}^d\mu_ip_i=m$. Let $(\mu_1,\ldots,\mu_d)=(M_1,\ldots,M_d)$ be a particular solution, then the set of solutions is given by
\begin{equation*}
\mu_1=M_1-\sum_{i=2}^dp_ik_i,\quad\mu_i=M_i+p_1k_i,\quad i=2,\ldots,d,\quad k_2,\ldots,k_d\in\mathbb{Z}.
\end{equation*}
So we have
\begin{equation*}
I_m^{p_1,\ldots,p_d}(u,\ldots,u)=\sum_{(k_2,\ldots,k_d)\in\mathbb{Z}^{d-1}}I_{M_1-\sum_{i=2}^dp_ik_i}(u)\prod_{i=2}^dI_{M_i+p_1k_i}(u).
\end{equation*}
Let $\Gamma\defeq\{1,\gamma_1,\ldots,\gamma_{d-1}\}$ be a set of integral parameters, and $k_1\in\mathbb{N}$. We set $M_1=nk_1$, $M_2=\ldots=M_d=0$, $p_1=1$, $p_i=\gamma_{i-1}$, $i=2,\ldots,d$, then the $d$-dimensional modified $I$-Bessel function of order $nk_1$ and parameters set $\Gamma$ is given by
\begin{equation*}
I_{nk_1}^\Gamma(u,\ldots,u)\defeq I_{nk_1}^{1,\gamma_1,\ldots,\gamma_{d-1}}(u,\ldots,u)=\sum_{(k_2,\ldots,k_d)\in\mathbb{Z}^{d-1}}I_{nk_1-\sum_{i=1}^{d-1}\gamma_ik_{i+1}}(u)\prod_{i=2}^dI_{k_i}(u)
\end{equation*}
which has the integral representation
\begin{equation}
I_{nk_1}^\Gamma(u,\ldots,u)=\frac{1}{2\pi}\int_{-\pi}^\pi e^{u\left(\cos w+\sum_{i=1}^{d-1}\cos \gamma_iw\right)}e^{-ink_1w}dw.
\label{intrepI}
\end{equation}
Since $I_{-n}(u)=I_n(u)$, notice that
\begin{equation}
\label{symI}
I_{-nk_1}^\Gamma(u,\ldots,u)=I_{nk_1}^\Gamma(u,\ldots,u).
\end{equation}
\subsection{\texorpdfstring{Upper bounds for \boldmath$I$-Bessel functions\unboldmath\ }{Upperbounds on I-Bessel function}}
\label{upperbounds}
Recall Remark $4.2$ in \cite{chinta2010zeta}: For all $t>0$ the following upper bound holds:
\begin{equation}
\label{boundI0}
0\leqslant ne^{-n^2t}I_0(n^2t)\leqslant Ct^{-1/2}
\end{equation}
for some positive constant $C$.\\
Recall Lemma $4.6$ in \cite{chinta2010zeta}:
\begin{lemma}
\label{lemma4.6}
Fix $t\geqslant0$ and non-negative integers $x$ and $n_0$. Then for all $n\geqslant n_0$, we have the uniform bound
\begin{equation*}
0\leqslant\sqrt{n^2t}e^{-n^2t}I_{nx}(n^2t)\leqslant\left(\frac{n_0t}{x+n_0t}\right)^{n_0x/2}=\left(1+\frac{x}{n_0t}\right)^{-n_0x/2}\leqslant1.
\end{equation*}
\end{lemma}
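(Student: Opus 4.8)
The plan is to split the statement into the two trivial outer inequalities and the one substantial middle inequality. Write $s=n^2t$ throughout (the case $t=0$ is trivial, both sides vanishing). Non-negativity of the left-hand side is immediate from the series representation (\ref{seriesrepI}), all of whose terms are non-negative for $s\geqslant0$; and the rightmost inequality is immediate because the base $1+\frac{x}{n_0t}\geqslant1$ is raised to the non-positive power $-n_0x/2$. For the middle inequality I would first factor
\[
\sqrt{s}\,e^{-s}I_{nx}(s)=\bigl(\sqrt{s}\,e^{-s}I_0(s)\bigr)\cdot\frac{I_{nx}(s)}{I_0(s)},
\]
and bound the first factor by $1$ using (\ref{boundI0}) (the constant there may be taken equal to $1$, since $\sup_{u>0}\sqrt{u}\,e^{-u}I_0(u)<1$). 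Everything then reduces to proving $I_{nx}(s)/I_0(s)\leqslant(1+\frac{x}{n_0t})^{-n_0x/2}$.

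Second, I would write this ratio as a telescoping product $\prod_{\nu=1}^{nx}I_\nu(s)/I_{\nu-1}(s)$ and insert a sharp pointwise upper bound on the consecutive ratios. The natural input is the Amos-type upper bound $I_\nu(s)/I_{\nu-1}(s)\leqslant s/\bigl(\nu-\tfrac12+\sqrt{(\nu-\tfrac12)^2+s^2}\bigr)$ (obtainable from the three-term recurrence $I_{\nu-1}(s)=I_{\nu+1}(s)+\frac{2\nu}{s}I_\nu(s)$ together with Tur\'an's inequality $I_\nu(s)^2\geqslant I_{\nu-1}(s)I_{\nu+1}(s)$, which forces monotonicity of the ratios in the index). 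Weakening it via $\sqrt{(\nu-\tfrac12)^2+s^2}\geqslant s$ turns the product into $\prod_{\nu=1}^{nx}s/(s+\nu-\tfrac12)$.

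Third, I would estimate this product by comparison with an integral. Taking logarithms gives $-\sum_{\nu=1}^{nx}\log(1+\frac{\nu-1/2}{s})$, and since $u\mapsto\log(1+u/s)$ is concave the midpoint rule yields $\sum_{\nu=1}^{nx}\log(1+\frac{\nu-1/2}{s})\geqslant\int_0^{nx}\log(1+\frac{u}{s})\,du=:L(n)$, where $L(n)=(n^2t+nx)\log(1+\frac{x}{nt})-nx$ after inserting $s=n^2t$. Hence $I_{nx}(s)/I_0(s)\leqslant e^{-L(n)}$, and it remains to check $L(n)\geqslant\frac{n_0x}{2}\log(1+\frac{x}{n_0t})$ for every $n\geqslant n_0$. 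I would verify this through the value at $n=n_0$ and the sign of $L'(n)$: a short computation gives $L'(n)=(2nt+x)\log(1+\frac{x}{nt})-2x$, and both the inequality at $n_0$ and $L'(n)\geqslant0$ collapse, after the substitution $u=\frac{x}{nt}$ (resp.\ $u=\frac{x}{n_0t}$), to the single elementary inequality $\log(1+u)\geqslant\frac{2u}{2+u}$ for $u\geqslant0$. Thus $L$ is non-decreasing and already dominates the target at $n_0$, giving $e^{-L(n)}\leqslant(1+\frac{x}{n_0t})^{-n_0x/2}$, as required.

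The hard part is the pointwise ratio bound. The final inequality is essentially sharp: the only multiplicative slack is the factor $\sqrt{s}\,e^{-s}I_0(s)<1/\sqrt{2\pi}$, so a crude bound on $I_\nu/I_{\nu-1}$ will not survive the integral comparison, and establishing the clean bound uniformly in $s>0$ and $\nu\geqslant1$ is the delicate step. As an alternative route that avoids Bessel-ratio inequalities, one can use the contour (exponential-tilt) estimate $I_{nx}(s)\leqslant e^{-nx\eta}I_0(s\cosh\eta)$ for every $\eta\geqslant0$, combined again with (\ref{boundI0}), to obtain $\sqrt{s}\,e^{-s}I_{nx}(s)\leqslant e^{-nx\eta+s(\cosh\eta-1)}$; but extracting the precise closed form $(1+\frac{x}{n_0t})^{-n_0x/2}$ then forces an $n$-dependent, non-optimal choice of $\eta$ (the exact saddle point produces $\operatorname{arcsinh}$ rather than a logarithm), together with a comparably delicate uniform verification.
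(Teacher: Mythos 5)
First, a point of reference: the paper does not prove this lemma at all --- it is recalled verbatim from \cite{chinta2010zeta} (Lemma $4.6$ there), so there is no in-paper proof to compare yours against; your route is in any case not the one taken in that reference, which does not go through ratios of consecutive Bessel functions.

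Your architecture and your endgame are correct: the midpoint-rule comparison $\sum_{\nu=1}^{nx}\log(1+\frac{\nu-1/2}{s})\geqslant\int_0^{nx}\log(1+u/s)\,du=L(n)$ is valid by concavity, the formula $L'(n)=(2nt+x)\log(1+\frac{x}{nt})-2x$ is right, and both $L'(n)\geqslant0$ and $L(n_0)\geqslant\frac{n_0x}{2}\log(1+\frac{x}{n_0t})$ do reduce to $\log(1+u)\geqslant\frac{2u}{2+u}$, which holds for all $u\geqslant0$. The genuine gap is at the crux, namely the ratio inequality $I_\nu(s)/I_{\nu-1}(s)\leqslant s/(\nu-\tfrac12+\sqrt{(\nu-\tfrac12)^2+s^2})$: the derivation you sketch does not produce it. Writing $r_\nu=I_\nu/I_{\nu-1}$, the three-term recurrence gives $1/r_\nu=r_{\nu+1}+2\nu/s$, and Tur\'an monotonicity first yields the lower bound $r_{\nu+1}\geqslant(\sqrt{(\nu+1)^2+s^2}-(\nu+1))/s$, whence the upper bound $r_\nu\leqslant s/(\nu-1+\sqrt{(\nu+1)^2+s^2})$. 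That is a different, incomparable bound: for large $s$ it is strictly weaker than yours, and after the weakening $\sqrt{\;\cdot\;}\geqslant s$ it degrades to $s/(s+\nu-1)$, whose $\nu=1$ factor equals $1$, so the telescoped product only controls $\sum_{j=1}^{nx-1}\log(1+j/s)$; for $nx=1$ (take $x=1$, $n=n_0=1$) this sum is $0$ while the target exponent $\tfrac12\log(1+1/t)$ is strictly positive, so the argument collapses exactly where the half-integer constants matter. The inequality you need is nonetheless true (it is Amos's upper bound): one clean proof is to note that $r_\nu$ satisfies the Riccati equation $r_\nu'=1-\frac{2\nu-1}{s}r_\nu-r_\nu^2$, while $\bar r(s)=(\sqrt{a^2+s^2}-a)/s$ with $a=\nu-\tfrac12$ satisfies $\bar r^2+\frac{2a}{s}\bar r=1$ and $\bar r'>0$, hence is a strict supersolution which dominates $r_\nu\sim s/(2\nu)$ near $s=0$ and therefore everywhere. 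With that substitution (or a citation of the precise Amos inequality) your proof closes; as written, the key step is asserted with a justification that does not deliver it.
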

\subsection{Method}
\label{method}
The method developed in \cite{chinta2010zeta} consists in studying the asymptotic behaviour of the Gauss transform of the theta function evaluated at zero in order to obtain the product of the Laplacian eigenvalues. This leads to the two following theorems which are adapted from Theorem $3.6$ in \cite{chinta2010zeta}. They express the logarithm of the determinant of the combinatorial Laplacian on the corresponding discrete torus in terms of integrals of theta and $I$-Bessel functions. The study of the asymptotics of these integrals will therefore lead to the asymptotic behaviour of the number of spanning trees.\\
In the case of the circulant graph we have:
\begin{theorem}
\label{methCirc}
We have the identity
\begin{equation*}
\log\Big(\prod_{\lambda_j\neq0}\lambda_j\Big)=n\mathcal{I}_d^\Gamma+\mathcal{H}_{C_n^\Gamma}
\end{equation*}
where
\begin{equation*}
\mathcal{I}_d^\Gamma=\int_0^\infty\left(e^{-t}-e^{-2dt}I_0^\Gamma(2t,\ldots,2t)\right)\frac{dt}{t}
\end{equation*}
and
\begin{equation*}
\mathcal{H}_{C_n^\Gamma}=-\int_0^\infty\left(\theta_{C_n^\Gamma}(t)-ne^{-2dt}I_0^\Gamma(2t,\ldots,2t)-1+e^{-t}\right)\frac{dt}{t}.
\end{equation*}
\end{theorem}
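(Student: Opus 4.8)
The plan is to build the identity out of the Frullani-type formula recalled in the proof of Proposition~\ref{intI0}, namely $\int_0^\infty(e^{-t}-e^{-xt})\,dt/t=\log x$ for $\mathrm{Re}(x)>0$. Since $1\in\Gamma$, the circulant graph $C_n^\Gamma$ is connected, so its Laplacian has a single zero eigenvalue and exactly $n-1$ strictly positive eigenvalues $\lambda_j$. I would apply the formula with $x=\lambda_j$ to each positive eigenvalue and sum; because there are only finitely many eigenvalues the interchange of the finite sum and the integral is immediate, and using $\sum_{\lambda_j\neq0}e^{-\lambda_jt}=\theta_{C_n^\Gamma}(t)-1$ this gives
\begin{equation*}
\log\Big(\prod_{\lambda_j\neq0}\lambda_j\Big)=\int_0^\infty\big((n-1)e^{-t}-\theta_{C_n^\Gamma}(t)+1\big)\frac{dt}{t}.
\end{equation*}
A short expansion shows this converges: as $t\to0$ one has $\theta_{C_n^\Gamma}(t)=n+O(t)$ and $(n-1)e^{-t}+1=n+O(t)$, so the bracket is $O(t)$, while as $t\to\infty$ the bracket decays exponentially.

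Next I would insert and subtract the term $ne^{-2dt}I_0^\Gamma(2t,\ldots,2t)$ in the integrand. The pointwise identity
\begin{equation*}
(n-1)e^{-t}-\theta_{C_n^\Gamma}+1=\big(ne^{-t}-ne^{-2dt}I_0^\Gamma\big)-\big(\theta_{C_n^\Gamma}-ne^{-2dt}I_0^\Gamma-1+e^{-t}\big)
\end{equation*}
holds with no error term, the two brackets being, up to the factor $1/t$, the integrands of $n\mathcal{I}_d^\Gamma$ and of $-\mathcal{H}_{C_n^\Gamma}$. To split the integral along this decomposition it suffices to check that $\mathcal{I}_d^\Gamma$ converges; then, since the total integrand and the $n\mathcal{I}_d^\Gamma$ integrand are both integrable, their pointwise difference---the integrand of $\mathcal{H}_{C_n^\Gamma}$---is integrable as well, and linearity yields $\log\big(\prod\lambda_j\big)=n\mathcal{I}_d^\Gamma+\mathcal{H}_{C_n^\Gamma}$.

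For $\mathcal{I}_d^\Gamma$ I would analyse the two endpoints. Near $t=0$, the integral representation \eqref{intrepI} with $k_1=0$ gives $I_0^\Gamma(2t,\ldots,2t)=1+O(t^2)$, the linear term vanishing because $\int_{-\pi}^\pi\cos(\gamma w)\,dw=0$; hence $e^{-t}-e^{-2dt}I_0^\Gamma=O(t)$ and the integrand is bounded. For large $t$ I would use the crude estimate $\sum_{i=1}^{d-1}\cos\gamma_iw\leqslant d-1$ inside \eqref{intrepI} to get $I_0^\Gamma(2t,\ldots,2t)\leqslant e^{2(d-1)t}I_0(2t)$, so that $e^{-2dt}I_0^\Gamma(2t,\ldots,2t)\leqslant e^{-2t}I_0(2t)=O(t^{-1/2})$ by the bound \eqref{boundI0}; the integrand is then $O(t^{-3/2})$ at infinity, and $\mathcal{I}_d^\Gamma$ converges.

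I expect the main obstacle to be controlling the large-$t$ decay of the $d$-dimensional Bessel term, which is exactly what the estimate via \eqref{boundI0} is designed to supply. The conceptual reason for the decomposition comes from the theta inversion formula of Proposition~\ref{thetainv}, which specialized to $C_n^\Gamma$ reads $\theta_{C_n^\Gamma}(t)=ne^{-2dt}\sum_{k\in\mathbb{Z}}I_{nk}^\Gamma(2t,\ldots,2t)$: subtracting the $k=0$ term leaves
\begin{equation*}
\theta_{C_n^\Gamma}(t)-ne^{-2dt}I_0^\Gamma(2t,\ldots,2t)=ne^{-2dt}\sum_{k\neq0}I_{nk}^\Gamma(2t,\ldots,2t),
\end{equation*}
which is precisely the tail governing $\mathcal{H}_{C_n^\Gamma}$ and which drives the subsequent asymptotic analysis.
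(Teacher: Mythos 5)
Your proof is correct and complete. The paper itself does not write out a proof of Theorem \ref{methCirc}: it states that the identity is ``adapted from Theorem 3.6'' of Chinta--Jorgenson--Karlsson, whose derivation runs through the Gauss transform of the theta function (essentially analysing $\int_0^\infty e^{-s^2t}(\theta(t)-1)\,dt$ and its behaviour at $s=0$), machinery built to produce the spectral zeta function and regularized determinants in a more general setting. Your route is more elementary and self-contained: you apply the Frullani identity $\int_0^\infty(e^{-t}-e^{-\lambda t})\,dt/t=\log\lambda$ directly to the $n-1$ positive eigenvalues (simplicity of the zero eigenvalue following from $1\in\Gamma$), interchange the finite sum with the integral, and then split off the $k_1=0$ term of the theta inversion formula. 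The only real work is the convergence of $\mathcal{I}_d^\Gamma$, and your two endpoint estimates are right: near $t=0$ the linear term of $I_0^\Gamma$ vanishes since $\int_{-\pi}^{\pi}\cos(\gamma w)\,dw=0$, so the integrand is bounded; near $t=\infty$ the bound $I_0^\Gamma(2t,\ldots,2t)\leqslant e^{2(d-1)t}I_0(2t)$ together with \eqref{boundI0} gives $O(t^{-3/2})$. Integrability of the $\mathcal{H}_{C_n^\Gamma}$ integrand then follows by difference, exactly as you say. What the Gauss-transform approach buys in exchange for its extra overhead is a uniform framework covering the full zeta-regularized determinant and lattices of growing covolume; for the finite identity asserted here, your argument is the shorter and cleaner one, and your closing observation that $\theta_{C_n^\Gamma}-ne^{-2dt}I_0^\Gamma$ is the $k_1\neq0$ tail of the inversion formula is precisely the structural point the rest of the paper exploits.
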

And in the case of the diagonal discrete torus we have:
\begin{theorem}
\label{methDis}
We have the identity
\begin{equation*}
\log\Big(\prod_{\lambda_j\neq0}\lambda_j\Big)=\det(\Lambda_n)\mathcal{I}_d^{\{\alpha_i\}_{i=1}^p}+\mathcal{H}_{\Lambda_n}
\end{equation*}
where
\begin{equation*}
\mathcal{I}_d^{\{\alpha_i\}_{i=1}^p}=\int_0^\infty\left(e^{-t}-e^{-2dt}I_0(2t)^{d-p}\sum_{(k_1,\ldots,k_p)\in\mathbb{Z}^p}\prod_{i=1}^pI_{k_i\alpha_ia_n}(2t)\right)\frac{dt}{t}
\end{equation*}
and
\begin{equation}
\label{Htermdt}
\mathcal{H}_{\Lambda_n}=-\int_0^\infty\left(\theta_{\Lambda_n}(t)-e^{-2dt}I_0(2t)^{d-p}\sum_{(k_1,\ldots,k_p)\in\mathbb{Z}^p}\prod_{i=1}^pI_{k_i\alpha_ia_n}(2t)-1+e^{-t}\right)\frac{dt}{t}.
\end{equation}
\end{theorem}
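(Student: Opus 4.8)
The plan is to reduce everything to the elementary integral $\int_0^\infty(e^{-t}-e^{-xt})\frac{dt}{t}=\log x$ (valid for $\mathrm{Re}(x)>0$ and already used in the proof of Proposition \ref{intI0}), applied termwise over the spectrum. Since the graph $\mathbb{Z}^d/\Lambda_n\mathbb{Z}^d$ is connected its Laplacian has a single zero eigenvalue, and counting multiplicities there are $\det(\Lambda_n)$ eigenvalues in all, hence $\det(\Lambda_n)-1$ nonzero ones. Applying the identity to each nonzero $\lambda_j$, using $\sum_{\lambda_j\neq0}e^{-\lambda_jt}=\theta_{\Lambda_n}(t)-1$, and interchanging the finite sum with the integral, I would first obtain
\begin{equation*}
\log\Big(\prod_{\lambda_j\neq0}\lambda_j\Big)=\int_0^\infty\Big((\det(\Lambda_n)-1)e^{-t}-(\theta_{\Lambda_n}(t)-1)\Big)\frac{dt}{t}.
\end{equation*}
This single integral converges: as $t\to\infty$ the bracket is exponentially small because $\theta_{\Lambda_n}(t)-1=\sum_{\lambda_j\neq0}e^{-\lambda_jt}$ decays with the spectral gap, and as $t\to0$ it is $O(t)$ since $\theta_{\Lambda_n}(t)\to\det(\Lambda_n)$ with finite first derivative.

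The second step is to insert the theta inversion formula of Proposition \ref{thetainv}, which for the diagonal matrix $\Lambda_n=\mathrm{diag}(\alpha_1a_n,\ldots,\alpha_pa_n,\beta_1n,\ldots,\beta_{d-p}n)$ writes $\theta_{\Lambda_n}(t)$ as $\det(\Lambda_n)e^{-2dt}$ times a lattice Bessel sum that factors over the coordinate directions into an $\alpha$-part and a $\beta$-part. I would then add and subtract inside the integral the reduced term $\det(\Lambda_n)P(t)$, where $P(t)=e^{-2dt}I_0(2t)^{d-p}\sum_{(k_1,\ldots,k_p)\in\mathbb{Z}^p}\prod_{i=1}^pI_{k_i\alpha_ia_n}(2t)$ is obtained by retaining the full $\alpha$-lattice sum but keeping only the zero mode in each of the $d-p$ $\beta$-directions (so that $\sum_kI_{\beta_ink}(2t)$ is replaced by $I_0(2t)$). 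Regrouping, the $e^{-t}$ and reduced-Bessel pieces combine into $\det(\Lambda_n)\,\mathcal{I}_d^{\{\alpha_i\}_{i=1}^p}$, while the $\theta_{\Lambda_n}(t)$, reduced-Bessel, $1$ and $e^{-t}$ pieces combine into $\mathcal{H}_{\Lambda_n}$, yielding the stated identity; here the subtracted Bessel term must carry the factor $\det(\Lambda_n)$, exactly as the count $n$ of vertices appears in the circulant analogue Theorem \ref{methCirc}.

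The main obstacle is to justify this add-and-subtract split, that is, to show that $\det(\Lambda_n)\mathcal{I}_d^{\{\alpha_i\}}$ and $\mathcal{H}_{\Lambda_n}$ converge separately rather than only in combination. Near $t=0$ both new integrands vanish to first order, since every nonzero-index Bessel function vanishes at the origin and hence both the full and the reduced per-vertex sums collapse to $1$; the $O(t)$ estimate then comes from the series representation (\ref{seriesrepI}). The delicate end is $t\to\infty$. Factoring $P(t)=(e^{-2t}I_0(2t))^{d-p}\prod_{i=1}^p\big(e^{-2t}\sum_{k\in\mathbb{Z}}I_{k\alpha_ia_n}(2t)\big)$ and using $e^{-2t}I_0(2t)\sim(4\pi t)^{-1/2}$ together with the theta inversion (\ref{thetainvZ/mZ}) to see that each $\alpha$-factor is bounded, one finds $P(t)$ decays polynomially like $t^{-(d-p)/2}$, which is integrable against $dt/t$ precisely because $d>p$. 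The integrand of $\mathcal{H}_{\Lambda_n}$ has the same polynomial tail $-\det(\Lambda_n)P(t)$ (its genuinely spectral part $\theta_{\Lambda_n}(t)-1$ and the $e^{-t}$ being exponentially small), so it too converges for $d>p$, and the two polynomial tails cancel in the sum, consistently with the convergence of the single integral above. Carrying out these bounds rigorously, with the uniform estimates (\ref{boundI0}) and Lemma \ref{lemma4.6} controlling the Bessel sums, is where the real work lies and mirrors the splitting used for Theorem $3.6$ in \cite{chinta2010zeta}.
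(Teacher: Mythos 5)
Your argument is correct. The paper itself gives no proof of Theorem \ref{methDis}; it only states that the identity is adapted from Theorem $3.6$ of \cite{chinta2010zeta}, where the analogous formula is derived via the Gauss transform of the theta function. Your route is more direct: start from the Frullani-type identity $\int_0^\infty(e^{-t}-e^{-\lambda t})\frac{dt}{t}=\log\lambda$ summed over the finitely many nonzero eigenvalues, insert the theta inversion formula of Proposition \ref{thetainv} for the diagonal matrix $\Lambda_n$, and add and subtract the reduced term $\det(\Lambda_n)P(t)$ obtained by keeping only the zero mode in the $\beta$-directions. The only content beyond bookkeeping is that the two pieces converge separately, and your checks are the right ones: both integrands are $O(t)$ near $t=0$ (since all nonzero-index Bessel functions vanish at the origin), and near $t=\infty$ the reduced term decays like $t^{-(d-p)/2}$ by the asymptotics of $e^{-2t}I_0(2t)$ and the boundedness of the $\alpha$-factors via (\ref{thetainvZ/mZ}), which is $dt/t$-integrable precisely because $d>p$ in the setting considered. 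You also correctly observe that the displayed formula (\ref{Htermdt}) must carry a factor $\det(\Lambda_n)$ in front of the Bessel sum (a typo in the statement), consistent both with the circulant analogue in Theorem \ref{methCirc} and with the form of $\mathcal{H}_{\Lambda_n}$ the paper actually uses later in section \ref{asdt}.
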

\section{Asymptotic behaviour of spectral determinant on circulant graphs}
\label{asc}
\subsection{Computation of the asymptotics}
Let $1\leqslant\gamma_1\leqslant\cdots\leqslant\gamma_{d-1}\leqslant\lfloor n/2\rfloor$ be positive integers and $C_n^\Gamma$ denote the circulant graph where $\Gamma\defeq\{1,\gamma_1,\ldots,\gamma_{d-1}\}$ is the set of generators. In this work we only consider circulant graphs with first generator equals to $1$. In this case one can verify that $C_n^\Gamma$ is isomorphic to the $d$-dimensional discrete torus $\mathbb{Z}^d/\Lambda_\Gamma\mathbb{Z}^d$ where $\Lambda_\Gamma$ is the following matrix
\begin{equation*}
\Lambda_\Gamma=\left(\begin{array}{c|ccc}
n&-\gamma_1&\cdots&-\gamma_{d-1}\\
\hline
&\multicolumn{3}{c}{\multirow{3}{*}{\Large{$I_{d-1}$}}}\\
0&&&\\
&&&
\end{array}\right)
\end{equation*}
where $I_{d-1}$ is the identity matrix of order $d-1$. Indeed, all the points on the lattice $\Lambda_\Gamma\mathbb{Z}^d$ are identified according to the numbers, where the nearest neighbours are connected to each other. Denote by $e_i$, $i=1,\ldots,d$, the canonical basis of $\mathbb{Z}^d$. Then $0\in\mathbb{Z}^d$ is connected to $e_i$ and $-e_i$ for $i=1,\ldots,d$. For $v\in\mathbb{Z}/n\mathbb{Z}$, all the points $ve_1+\Lambda_\Gamma\mathbb{Z}^d$ are identified to $v$. Hence $0$ is connected to $1$. Since $-e_1=(n-1)e_1-\Lambda_\Gamma e_1$, $0$ is connected to $n-1$. Using that $e_{i+1}=\gamma_ie_1+\Lambda_\Gamma e_{i+1}$, $i=1,\ldots,d-1$, $0$ is connected to $\gamma_i$ for all $i=1,\ldots,d-1$. Finally, $-e_{i+1}=-\gamma_ie_1-\Lambda_\Gamma e_{i+1}$, $i=1,\ldots,d-1$, so that $0$ is connected to $-\gamma_i$ mod $n$, for all $i=1,\ldots,d-1$. Similarly, all $v\in\mathbb{Z}/n\mathbb{Z}$ are connected to $v\pm\gamma_i$ mod $n$ for all $i=1,\ldots,d$. Therefore the quotient $\mathbb{Z}^d/\Lambda_\Gamma\mathbb{Z}^d$ with nearest neighbours connected to each other is isomorphic to the circulant graph $C^\Gamma_n$. Figure \ref{latticeC127} illustrates the lattice corresponding to the circulant graph $C^{1,2}_7$ represented in Figure \ref{circ}.
\begin{figure}
\centering
\includegraphics[width=8cm]{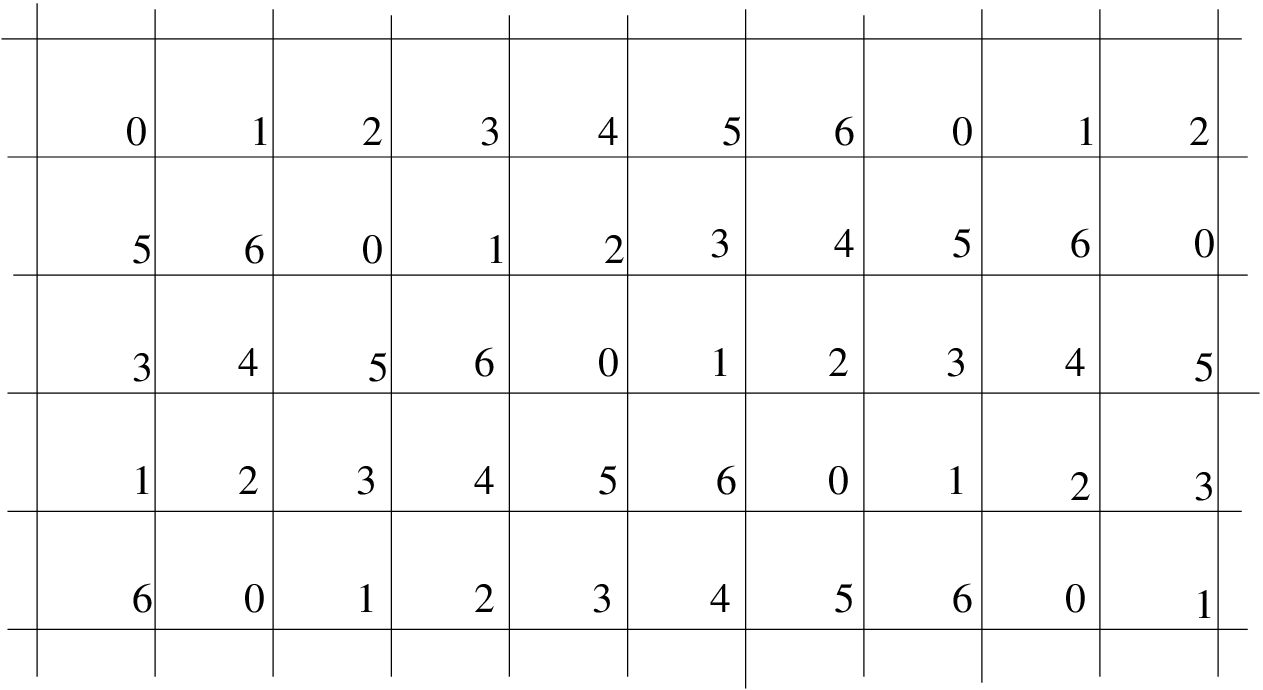}
\caption[lattice]{The lattice $\left(\begin{array}{cc}7&-2\\0&1\end{array}\right)\mathbb{Z}^2$.}
\label{latticeC127}
\end{figure}
The fact that the matrix is almost diagonal simplifies the expression of the theta function. Indeed from Proposition \ref{thetainv} the theta function on $C^\Gamma_n$ is given by
\begin{equation*}
\theta_{C^\Gamma_n}(n^2t)=ne^{-2dn^2t}\sum_{(k_1,\cdots,k_d)\in\mathbb{Z}^d}I_{nk_1-\sum_{i=1}^{d-1}\gamma_ik_{i+1}}(2n^2t)\prod_{i=2}^dI_{k_i}(2n^2t).
\end{equation*}
Rewriting it in terms of the $d$-dimensional modified $I$-Bessel function defined in section \ref{ddimB} we get
\begin{equation*}
\theta_{C_n^\Gamma}(n^2t)=ne^{-2dn^2t}\sum_{k_1\in\mathbb{Z}}I_{nk_1}^\Gamma(2n^2t,\ldots,2n^2t).
\end{equation*}
A circulant graph is the Cayley graph of a finite abelian group, so the eigenvectors of the Laplacian on $C_n^\Gamma$ are the characters
\begin{equation*}
\chi_j(x)=e^{2\pi ijx/n},\quad j=0,1,\ldots,n-1.
\end{equation*}
By applying the Laplacian on the characters, we obtain the eigenvalues
\begin{equation*}
\lambda_j=2d-2\cos(2\pi j/n)-2\sum_{i=1}^{d-1}\cos(2\pi\gamma_ij/n),\quad j=0,1,\ldots,n-1.
\end{equation*}
Therefore, by definition of the theta function (\ref{theta}) it can also be written as
\begin{align}
\label{theta2}
\theta_{C_n^\Gamma}(n^2t)&=\sum_{j=0}^{n-1}e^{-(2d-2\cos(2\pi j/n)-2\sum_{i=1}^{d-1}\cos(2\pi\gamma_ij/n))n^2t}\nonumber\\
&=\sum_{j=0}^{n-1}e^{-4(\sin^2(\pi j/n)+\sum_{i=1}^{d-1}\sin^2(\pi\gamma_ij/n))n^2t}.
\end{align}
\begin{proposition}
With the above notation we have for all $t\geqslant0$,
\label{thetalimit}
\begin{equation*}
\lim_{n\rightarrow\infty}\theta_{C_n^\Gamma}(n^2t)=\Theta_{S^1}(c_\Gamma t)
\end{equation*}
where $\Theta_{S^1}$ is the theta function on the circle $S^1=\mathbb{R}/\mathbb{Z}$ given by
\begin{equation*}
\Theta_{S^1}(t)=\frac{1}{\sqrt{4\pi t}}\sum_{k=-\infty}^\infty e^{-k^2/(4t)}.
\end{equation*}
\end{proposition}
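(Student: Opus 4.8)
The plan is to work directly from the spectral form (\ref{theta2}) of the theta function rather than from the Bessel form, since (\ref{theta2}) already displays $\theta_{C_n^\Gamma}(n^2t)$ as a finite sum of Gaussians that ought to converge, term by term, to a Jacobi theta series. First I would reindex the sum over $j\in\{0,\ldots,n-1\}$ by a symmetric set of representatives modulo $n$, say $|j|\leqslant n/2$. This is legitimate because both $\sin^2(\pi j/n)$ and each $\sin^2(\pi\gamma_i j/n)$ are even and $n$-periodic in $j$, so the summand is unchanged; in this form the $j=0$ term equals $1$ and the remaining terms are symmetric about $j=0$.

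Next, for each fixed integer $k$ I would compute the pointwise limit of the term at $j=k$. Using $n\sin(\pi k/n)\to\pi k$ and likewise $n\sin(\pi\gamma_i k/n)\to\pi\gamma_i k$, the exponent converges to $-4\pi^2 t(1+\sum_{i=1}^{d-1}\gamma_i^2)k^2=-4\pi^2 c_\Gamma t\,k^2$, so the $k$-th summand tends to $e^{-4\pi^2 c_\Gamma t\,k^2}$.

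To pass from termwise convergence to convergence of the whole (growing) sum, I would invoke a dominated convergence argument for series (Tannery's theorem), and the crux is producing a single $n$-independent summable majorant. Here I would keep only the first sine (corresponding to the generator $1\in\Gamma$, which is always present) and drop the remaining nonnegative terms, then use the elementary inequality $\sin x\geqslant x/2$ on $[0,\pi/2]$ — already exploited in the proof of Proposition \ref{prop4.7bis} — to obtain, for $|j|\leqslant n/2$,
\[
0\leqslant e^{-4n^2t(\sin^2(\pi j/n)+\sum_{i}\sin^2(\pi\gamma_ij/n))}\leqslant e^{-4n^2t\sin^2(\pi j/n)}\leqslant e^{-\pi^2 t j^2},
\]
with $\sum_{j\in\mathbb{Z}}e^{-\pi^2 t j^2}<\infty$ for $t>0$. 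This majorant is the main obstacle: everything else is routine, but the interchange of limit and infinite summation only works once a uniform summable bound is in hand, and it is precisely the ever-present generator $1$ that supplies it. Tannery's theorem then yields
\[
\lim_{n\to\infty}\theta_{C_n^\Gamma}(n^2t)=\sum_{k\in\mathbb{Z}}e^{-4\pi^2 c_\Gamma t\,k^2}.
\]

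Finally I would identify this limit with $\Theta_{S^1}(c_\Gamma t)$. Applying the Poisson summation formula to the Gaussian $x\mapsto e^{-4\pi^2 c_\Gamma t\,x^2}$ (equivalently, Jacobi theta inversion) transforms $\sum_k e^{-4\pi^2 c_\Gamma t\,k^2}$ into $\frac{1}{\sqrt{4\pi c_\Gamma t}}\sum_k e^{-k^2/(4c_\Gamma t)}$, which is exactly the heat-kernel form of $\Theta_{S^1}(c_\Gamma t)$ appearing in the statement. The endpoint $t=0$ is degenerate, since then $\theta_{C_n^\Gamma}(0)=n\to\infty$ and $\Theta_{S^1}(0)=+\infty$, so only $t>0$ requires argument.
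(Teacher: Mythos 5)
Your proof is correct, but it takes a genuinely different route from the paper's. The paper works on the ``geometric'' side of the theta inversion formula: it writes $\theta_{C_n^\Gamma}(n^2t)=n e^{-2dn^2t}\sum_{k_1}I_{nk_1}^\Gamma(2n^2t,\ldots,2n^2t)$, justifies exchanging the limit with the sum over $k_1$ via a bound obtained from (\ref{PoissonTranslated}), and then computes $\lim_n n e^{-2dn^2t}I_{nk_1}^\Gamma(2n^2t,\ldots,2n^2t)=\frac{1}{\sqrt{4\pi c_\Gamma t}}e^{-k_1^2/(4c_\Gamma t)}$ from the integral representation (\ref{intrepI}) by dominated convergence, using $(1-\cos v)/v^2\geqslant c$ to dominate the integrand. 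That lands term-by-term on the heat-kernel form of $\Theta_{S^1}(c_\Gamma t)$ stated in the proposition, with no Poisson summation needed at the end. You instead stay on the spectral side (\ref{theta2}), where the theta function is a finite sum of Gaussians in the eigenvalues; after symmetrizing the index set, Tannery's theorem with the majorant $e^{-\pi^2tj^2}$ (supplied, as you rightly emphasize, by the ever-present generator $1$) gives $\sum_k e^{-4\pi^2c_\Gamma tk^2}$, and one final Jacobi inversion converts this to the stated form. Your argument is more elementary --- it needs no Bessel integral representations and no interchange of limit and integral, only a summable majorant for a sum --- and it makes transparent why the constant $c_\Gamma=1+\sum_i\gamma_i^2$ appears (it is the coefficient of $k^2$ in the rescaled eigenvalues). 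What the paper's route buys is uniformity with the rest of the argument: the bounds on $ne^{-2dn^2t}I_{nk_1}^\Gamma$ and the splitting by $k_1$ set up there are reused in Propositions \ref{int01theta-I0} and \ref{limintI0}, where the Bessel form is unavoidable. The only points to tidy in your write-up are the trivial bookkeeping at $j=\pm n/2$ when $n$ is even (the two representatives give equal terms, so either choice works) and an explicit statement that the case $t=0$ is excluded, both of which you already acknowledge.
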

\begin{proof}
From the theta inversion formula on $\mathbb{Z}/m\mathbb{Z}$ (Theorem $10$ in \cite{karlsson2006heat}) we have for any $z\in\mathbb{C}$, and integers $x$ and $m>0$,
\begin{equation}
\label{PoissonTranslated}
\sum_{k=-\infty}^\infty I_{x+km}(z)=\frac{1}{m}\sum_{j=0}^{m-1}e^{\cos(2\pi j/m)z+2\pi ijx/m}.
\end{equation}
Using the expression of the theta function in terms of $I$-Bessel functions, it follows that for all $n\geqslant1$ and $t>0$,
\begin{align*}
\lvert\theta_{C_n^\Gamma}(n^2t)\rvert&=\lvert ne^{-2dn^2t}\sum_{(k_2,\ldots,k_d)\in\mathbb{Z}^{d-1}}\frac{1}{n}\sum_{j=0}^{n-1}e^{2n^2t\cos(2\pi j/n)-2\pi ij\sum_{i=1}^{d-1}\gamma_ik_{i+1}/n}\prod_{i=2}^dI_{k_i}(2n^2t)\rvert\\
&\leqslant\prod_{i=2}^d\sum_{k_i\in\mathbb{Z}}e^{-2n^2t}I_{k_i}(2n^2t)\sum_{j=0}^{n-1}e^{-2n^2t\left(1-\cos(2\pi j/n)\right)}\\
&\leqslant\sum_{j=0}^{n-1}e^{-8\pi^2ctj^2}\leqslant\sum_{j=0}^{n-1}e^{-c'tj}\leqslant\frac{1}{1-e^{-c't}}
\end{align*}
where $c'>0$. In the second inequality we used the fact that for all $v\in[0,\pi]$, $(1-\cos v)/v^2\geqslant c$, with $c=1/2-\pi^2/24>0$, and $e^{-t}\sum_{x\in\mathbb{Z}}I_x(t)=1$.\\
It follows that
\begin{equation}
\label{limtheta}
\lim_{n\rightarrow\infty}\theta_{C_n^\Gamma}(n^2t)=\sum_{k_1\in\mathbb{Z}}\lim_{n\rightarrow\infty}ne^{-2dn^2t}I_{nk_1}^\Gamma(2n^2t,\ldots,2n^2t).
\end{equation}
Let $k_1>0$. From the integral representation of the $d$-dimensional $I$-Bessel function we have
\begin{equation*}
ne^{-2dn^2t}I_{nk_1}^\Gamma(2n^2t,\ldots,2n^2t)=\frac{1}{2\pi k_1}\int_{-\pi nk_1}^{\pi nk_1}e^{iw}e^{-2n^2t\left(d-\cos(w/(nk_1))-\sum_{i=1}^{d-1}\cos(\gamma_iw/(nk_1))\right)}dw.
\end{equation*}
Since $(1-\cos v)/v^2\geqslant c>0$ for all $v\in[0,\pi]$, we have that
\begin{equation*}
n^2(d-\cos(w/(nk_1))-\sum_{i=1}^{d-1}\cos(\gamma_iw/(nk_1)))\geqslant 
c\left(w/k_1\right)^2
\end{equation*}
for all $w\in[0,\pi nk_1]$. Hence for all $n\geqslant1$,
\begin{equation*}
\lvert ne^{-2dn^2t}I_{nk_1}^\Gamma(2n^2t,\ldots,2n^2t)\rvert\leqslant\frac{1}{2\pi k_1}\int_{-\pi nk_1}^{\pi nk_1}e^{-2tcw^2/k_1^2}dw\leqslant\frac{1}{2\pi k_1}\int_{-\infty}^\infty e^{-2tcw^2/k_1^2}dw=\sqrt{\frac{2}{\pi ct}}.
\end{equation*}
We also have that
\begin{equation*}
\lim_{n\rightarrow\infty}n^2(d-\cos(w/(nk_1))-\sum_{i=1}^{d-1}\cos(\gamma_iw/(nk_1)))=\frac{c_\Gamma}{2}(w/k_1)^2.
\end{equation*}
So by the Lebesgue dominated convergence Theorem, we have for all $k_1>0$
\begin{align}
\label{limthetak1}
\lim_{n\rightarrow\infty}ne^{-2dn^2t}I_{nk_1}^\Gamma(2n^2t,\ldots,2n^2t)&=\frac{1}{2\pi k_1}\int_{-\infty}^\infty e^{-c_\Gamma tw^2/k_1^2}e^{iw}dw\nonumber\\
&=\frac{1}{\sqrt{4\pi c_\Gamma t}}e^{-k_1^2/(4c_\Gamma t)}.
\end{align}
Let $k_1=0$. From the integral representation of the $d$-dimensional $I$-Bessel function we have
\begin{equation*}
ne^{-2dn^2t}I_0^\Gamma(2n^2t,\ldots,2n^2t)=\frac{1}{2\pi}\int_{-\pi n}^{\pi n}e^{-2n^2t\left(d-\cos(w/n)-\sum_{i=1}^{d-1}\cos(\gamma_iw/n)\right)}dw.
\end{equation*}
With the same argument as in the case $k_1>0$ we can apply the Lebesgue dominated convergence Theorem and we get
\begin{align}
\label{limtheta0}
\lim_{n\rightarrow\infty}ne^{-2dn^2t}I_0^\Gamma(2n^2t,\ldots,2n^2t)&=\frac{1}{2\pi}\int_{-\infty}^\infty e^{-c_\Gamma tw^2}dw\nonumber\\
&=\frac{1}{\sqrt{4\pi c_\Gamma t}}.
\end{align}
Putting (\ref{limthetak1}) and (\ref{limtheta0}) in (\ref{limtheta}) and using (\ref{symI}), the result follows.
\end{proof}
\begin{proposition}
With the above notation we have
\label{int01theta-I0}
\begin{align*}
&\lim_{n\rightarrow\infty}\int_0^1(\theta_{C_n^\Gamma}(n^2t)-ne^{-2dn^2t}I_0^\Gamma(2n^2t,\ldots,2n^2t))\frac{dt}{t}\\
&=\int_0^1\Big(\Theta_{S^1}(c_\Gamma t)-\frac{1}{\sqrt{4\pi c_\Gamma t}}\Big)\frac{dt}{t}.
\end{align*}
\end{proposition}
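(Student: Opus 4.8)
The plan is to pass to the limit under the integral sign by a dominated–convergence argument, after recording two structural facts. First, the two quantities inside the integral converge pointwise on $(0,1]$: Proposition \ref{thetalimit} gives $\theta_{C_n^\Gamma}(n^2t)\to\Theta_{S^1}(c_\Gamma t)$, while (\ref{limtheta0}) gives $ne^{-2dn^2t}I_0^\Gamma(2n^2t,\ldots,2n^2t)\to(4\pi c_\Gamma t)^{-1/2}$, so the integrand converges pointwise to $(\Theta_{S^1}(c_\Gamma t)-(4\pi c_\Gamma t)^{-1/2})/t$. Writing $h_{n,k}(t)\defeq ne^{-2dn^2t}I_{nk}^\Gamma(2n^2t,\ldots,2n^2t)$, the series representation of $I_{nk}^\Gamma$ shows $h_{n,k}(t)\ge0$, and the theta formula together with (\ref{symI}) gives $\theta_{C_n^\Gamma}(n^2t)-h_{n,0}(t)=2\sum_{k\ge1}h_{n,k}(t)\ge0$; hence the integrand is nonnegative. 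Second, the limit integrand is integrable near $0$, since $\Theta_{S^1}(c_\Gamma t)-(4\pi c_\Gamma t)^{-1/2}=(4\pi c_\Gamma t)^{-1/2}\sum_{k\neq0}e^{-k^2/(4c_\Gamma t)}$ is exponentially small as $t\to0$.

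I would split $\int_0^1=\int_0^\delta+\int_\delta^1$. On $[\delta,1]$ the convergence is dominated: the proof of Proposition \ref{thetalimit} gives $\theta_{C_n^\Gamma}(n^2t)\le(1-e^{-c't})^{-1}$, and the same estimate bounds $h_{n,0}(t)\le Ct^{-1/2}$; both are bounded by a constant on $[\delta,1]$, so dominated convergence yields $\lim_n\int_\delta^1=\int_\delta^1(\Theta_{S^1}(c_\Gamma t)-(4\pi c_\Gamma t)^{-1/2})\frac{dt}{t}$, and letting $\delta\to0$ on the right is harmless by the integrability just noted. Everything then reduces to showing that $\int_0^\delta$ of the nonnegative integrand, i.e. $\int_0^\delta 2\sum_{k\ge1}h_{n,k}(t)\frac{dt}{t}$, is negligible as $\delta\to0$, uniformly enough in $n$.

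The key estimate is a uniform-in-$n$ bound on $h_{n,k}(t)$ that decays in $k$. From (\ref{intrepI}) one has $h_{n,k}(t)=\frac{n}{2\pi}\int_{-\pi}^\pi e^{-2n^2tg(w)}e^{-inkw}dw$ with $g(w)=d-\cos w-\sum_{i=1}^{d-1}\cos\gamma_iw\ge0$, and crucially the integrand is $2\pi$-periodic (since $g$ is and $nk\in\mathbb{Z}$). For $n^2t\ge1$ I would shift the contour to $w-is$: periodicity cancels the vertical sides, the shift produces a factor $e^{-nks}$, and since $\mathrm{Re}\,g(w-is)\ge g(w)-R(s)\ge c_1w^2-R(s)$ with $R(s)=(\cosh s-1)+\sum_i(\cosh\gamma_is-1)\le C_2s^2$, the resulting Gaussian integral in $w$ cancels the prefactor $n$ and gives $h_{n,k}(t)\le C_1t^{-1/2}e^{-nks+2C_2n^2ts^2}$; optimizing $s$ over $(0,s_0]$ yields $h_{n,k}(t)\le C_1t^{-1/2}(e^{-k^2/(8C_2t)}+e^{-s_0nk/2})$. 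For the complementary regime $n^2t\le1$ I would instead integrate by parts three times in $w$ (again using periodicity to drop boundary terms), obtaining $h_{n,k}(t)\le Ctk^{-3}$ uniformly in $n$. Summing over $k$ and integrating against $dt/t$ over $[0,\delta]$ (with the substitution $u=1/t$ for the Gaussian piece) shows that the $n^2t\le1$ part contributes $O(\delta)$, the $e^{-k^2/(8C_2t)}$ part tends to $0$ as $\delta\to0$ uniformly in $n$, and the leftover $e^{-s_0nk/2}$ part is $O(ne^{-s_0n/2})$; taking first $\limsup_n$ and then $\delta\to0$ closes the argument.

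The main obstacle is exactly this last uniform bound. The delicate point is the transitional regime $n^2t\approx1$ (that is $t\approx n^{-2}$): there the contour-shift estimate degenerates, because the clamped shift $s=s_0$ leaves a non-integrable $t^{-3/2}$ singularity at $t=0$, and this must be patched by the integration-by-parts estimate valid for $n^2t\le1$. Matching the two regimes cleanly, and controlling the genuinely non-quadratic part of the phase $g$ on the shifted contour so that the Gaussian bound holds with constants independent of $n$ and $k$, is where the real care is needed.
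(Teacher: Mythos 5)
Your argument is correct in outline, and the estimates you state do close (I checked the contour shift: with $g(w)=d-\cos w-\sum_i\cos\gamma_iw$ one has $\mathrm{Re}\,g(w-is)=g(w)-R(s)+(1-\cos w)(\cosh s-1)+\sum_i(1-\cos\gamma_iw)(\cosh\gamma_is-1)\geqslant g(w)-R(s)$, and $g(w)\geqslant(2/\pi^2)w^2$ on $[-\pi,\pi]$, so your bound $h_{n,k}(t)\leqslant C_1t^{-1/2}e^{-nks+2C_2n^2ts^2}$ and its optimized form are valid; the regime-matching you flag does work out, since the clamped contribution on $t\in[n^{-2},\delta]$ is $O(ne^{-s_0n/2})$ and the integration-by-parts bound $Ct/k^3$ covers $t\leqslant n^{-2}$). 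However, your route is genuinely different from the paper's. The paper never touches the integral representation here: it works with the lattice-sum form $I_{nk_1}^\Gamma(u,\ldots,u)=\sum_{(k_2,\ldots,k_d)}I_{nk_1-\sum_i\gamma_ik_{i+1}}(u)\prod_iI_{k_i}(u)$, splits the index set according to whether $\lvert nk_1-\sum_i\gamma_ik_{i+1}\rvert\leqslant nk_1/2$ or not, and observes that in the first case some $\lvert k_i\rvert$ must exceed $nk_1/(2(d-1)\gamma_{d-1})$ while in the second the leading Bessel index exceeds $nk_1/2$; monotonicity of $I_\nu$ in $\nu$ and the uniform bound of Lemma \ref{lemma4.6} (with $n_0=2(d-1)\gamma_{d-1}+1$) then produce a single dominating function $Ct^{\epsilon-1/2}$ valid on all of $(0,1)$ for every $n\geqslant n_0$, so one application of dominated convergence suffices. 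What the paper's method buys is exactly this: a single real-variable majorant and no case analysis in $t$, at the cost of the $D_{k_1}/D_{k_1}^c$ decomposition, the permutation bookkeeping, and a resummation via the theta inversion formula (\ref{PoissonTranslated}). What your method buys is sharper information --- the Gaussian tail $e^{-k^2/(8C_2t)}$ is essentially the true decay, matching $\Theta_{S^1}(c_\Gamma t)$ --- and it bypasses the combinatorics entirely; the price is the complex-analytic machinery, the clamping issue near $t=0$, and a three-regime splicing with a $\limsup$-then-$\delta\to0$ argument rather than a one-shot dominated convergence. If you write yours up, the place to be most careful is the one you already identified: the window $n^{-2}\leqslant t\leqslant c/n$ where every $k\geqslant1$ is clamped, which must be controlled by the factor $e^{-s_0n/2}$ beating the $\int t^{-3/2}dt\sim n$ growth.
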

\begin{proof}
For a given positive integer $k_1\geqslant1$, let $D_{k_1}$ denote the following set
\begin{equation*}
D_{k_1}=\{(k_2,\ldots,k_d)\in\mathbb{Z}^{d-1}\mid|nk_1-\sum_{i=1}^{d-1}\gamma_ik_{i+1}|\leqslant nk_1/2\}
\end{equation*}
and let $D_{k_1}^c=\mathbb{Z}^{d-1}\setminus D_{k_1}$ denote the complement of $D_{k_1}$. From the theta inversion formula we have
\begin{align*}
&\theta_{C_n^\Gamma}(n^2t)-ne^{-2dn^2t}I_0^\Gamma(2n^2t,\ldots,2n^2t)\\
&=2ne^{-2dn^2t}\sum_{k_1=1}^\infty\sum_{(k_2,\ldots,k_d)\in\mathbb{Z}^{d-1}}I_{nk_1-\sum_{i=1}^{d-1}\gamma_ik_{i+1}}(2n^2t)\prod_{i=2}^{d}I_{k_i}(2n^2t)\\
&=2ne^{-2dn^2t}\sum_{k_1=1}^\infty\Bigg[\sum_{(k_2,\ldots,k_d)\in D_{k_1}^c}+\sum_{(k_2,\ldots,k_d)\in D_{k_1}}\Bigg]I_{nk_1-\sum_{i=1}^{d-1}\gamma_ik_{i+1}}(2n^2t)\prod_{i=2}^{d}I_{k_i}(2n^2t).
\end{align*}
Since the modified Bessel function $I_k$ is decreasing in the index $k$ \cite{MR0213624}, for $(k_2,\ldots,k_d)\in D_{k_1}^c$,
\begin{equation*}
I_{nk_1-\sum_{i=1}^{d-1}\gamma_ik_{i+1}}(2n^2t)=I_{\lvert nk_1-\sum_{i=1}^{d-1}\gamma_ik_{i+1}\rvert}(2n^2t)\leqslant I_{nk_1/2}(2n^2t).
\end{equation*}
Using that $e^{-2n^2t}\sum_{k\in\mathbb{Z}}I_k(2n^2t)=1$, it follows that
\begin{equation*}
2ne^{-2dn^2t}\sum_{k_1=1}^\infty\sum_{(k_2,\ldots,k_d)\in D_{k_1}^c}I_{nk_1-\sum_{i=1}^{d-1}\gamma_ik_{i+1}}(2n^2t)\prod_{i=2}^{d}I_{k_i}(2n^2t)\leqslant2ne^{-2dn^2t}\sum_{k_1=1}^\infty I_{nk_1/2}(2n^2t).
\end{equation*}
Using Lemma \ref{lemma4.6}, for all $n\geqslant n_0$ the above is less or equal than
\begin{align}
\sqrt{\frac{2}{t}}\sum_{k=1}^{\infty}\left(1+\frac{k}{4n_0t}\right)^{-n_0k/4}\leqslant\sqrt{\frac{2}{t}}\frac{1}{(1+1/(4n_0t))^{n_0/4}-1}\leqslant\sqrt{2}(4n_0)^{n_0/4}t^{n_0/4-1/2}.
\label{bound1}
\end{align}
For $(k_2,\ldots,k_d)\in D_{k_1}$, we have
\begin{equation*}
|nk_1-\sum_{i=1}^{d-1}\gamma_i\lvert k_{i+1}\rvert|\leqslant\frac{nk_1}{2}.
\end{equation*}
Since $1\leqslant\gamma_1\leqslant\cdots\leqslant\gamma_{d-1}$, it follows that
\begin{equation*}
\frac{nk_1}{2}\leqslant\sum_{i=1}^{d-1}\gamma_i\lvert k_{i+1}\rvert\leqslant\gamma_{d-1}(d-1)\max_{i\in\{2,\ldots,d\}}\lvert k_i\rvert
\end{equation*}
so that
\begin{equation}
\label{max}
\max_{i\in\{2,\ldots,d\}}\lvert k_i\rvert\geqslant\frac{nk_1}{2(d-1)\gamma_{d-1}}.
\end{equation}
Let $S_{d-1}$ denote the set of permutations of $\{2,\ldots,d\}$. By ordering the $k_i$'s in the second summation we obtain
\begin{align*}
&\lvert2ne^{-2dn^2t}\sum_{k_1=1}^\infty\sum_{(k_2,\ldots,k_d)\in D_{k_1}}I_{nk_1-\sum_{i=1}^{d-1}\gamma_ik_{i+1}}(2n^2t)\prod_{i=2}^{d}I_{k_i}(2n^2t)\rvert\\
&=\lvert2ne^{-2dn^2t}\sum_{k_1=1}^\infty\sum_{\sigma\in S_{d-1}}\Bigg[\sum_{\substack{(k_2,\ldots,k_d)\in D_{k_1}\\ \lvert k_{\sigma(2)}\rvert<\cdots<\lvert k_{\sigma(d)}\rvert}}\\
&\quad\quad\quad\quad\quad\quad\quad\quad\quad\quad\quad\quad\quad\quad+\sum_{\substack{(k_2,\ldots,k_d)\in D_{k_1}\\ \lvert k_{\sigma(2)}\rvert=\cdots=\lvert k_{\sigma(d)}\rvert}}\Bigg]I_{nk_1-\sum_{i=2}^d\gamma_{\sigma(i)-1}k_{\sigma(i)}}(2n^2t)\prod_{i=2}^{d}I_{k_\sigma(i)}(2n^2t)\rvert
\end{align*}
Using inequatity (\ref{max}) and the fact that $I_k$ is decreasing in $k$, we have
\begin{equation*}
I_{k_{\sigma(d)}}(2n^2t)=I_{\lvert k_{\sigma(d)}\rvert}(2n^2t)\leqslant I_{nk_1/(2(d-1)\gamma_{d-1})}(2n^2t)
\end{equation*}
hence the above is less or equal than
\begin{align}
2n&e^{-2n^2t}\sum_{k_1=1}^\infty I_{nk_1/(2(d-1)\gamma_{d-1})}(2n^2t)\nonumber\\
&\times\sum_{\sigma\in S_{d-1}}\sum_{(k_{\sigma(2)},\ldots,k_{\sigma(d-1)})\in\mathbb{Z}^{d-2}}\lvert e^{-2n^2t}\sum_{k_{\sigma(d)}\in\mathbb{Z}}I_{nk_1-\sum_{i=2}^d\gamma_{\sigma(i)-1}k_{\sigma(i)}}(2n^2t)\rvert\prod_{i=2}^{d-1}e^{-2n^2t}I_{k_{\sigma(i)}}(2n^2t).
\label{2ndsum}
\end{align}
Using the theta inversion formula (\ref{PoissonTranslated}) with $m=\gamma_{\sigma(d)-1}$ and $x=nk_1-\sum_{i=2}^{d-1}\gamma_{\sigma(i)-1}k_{\sigma(i)}$, we have that
\begin{align*}
&\lvert e^{-2n^2t}\sum_{k_{\sigma(d)}\in\mathbb{Z}}I_{nk_1-\sum_{i=2}^d\gamma_{\sigma(i)-1}k_{\sigma(i)}}(2n^2t)\rvert\\
&=\frac{1}{\gamma_{\sigma(d)-1}}\lvert\sum_{k=0}^{\gamma_{\sigma(d)-1}-1}e^{-2n^2t(1-\cos(2\pi k/\gamma_{\sigma(d)-1}))+2\pi ik(nk_1-\sum_{i=2}^{d-1}\gamma_{\sigma(i)-1}k_{\sigma(i)})/\gamma_{\sigma(d)-1}}\rvert\leqslant1.
\end{align*}
Putting the above in (\ref{2ndsum}) and using that $e^{-2n^2t}\sum_{k\in\mathbb{Z}}I_k(2n^2t)=1$, the second summation is less or equal than
\begin{align}
&2ne^{-2n^2t}\sum_{k_1=1}^\infty I_{nk_1/(2(d-1)\gamma_{d-1})}(2n^2t)(d-1)!\nonumber\\
&\leqslant\sqrt{2}(d-1)!(4(d-1)\gamma_{d-1}n_0)^{n_0/(4(d-1)\gamma_{d-1})}t^{n_0/(4(d-1)\gamma_{d-1})-1/2}
\label{bound2}
\end{align}
for all $n\geqslant n_0$, where we used Lemma \ref{lemma4.6} in the second inequality. Inequalities (\ref{bound1}) and (\ref{bound2}) together lead to
\begin{align*}
&\lvert\theta_{C_n^\Gamma}(n^2t)-ne^{-2dn^2t}I_0^\Gamma(2n^2t,\ldots,2n^2t)\rvert\\
&\leqslant\sqrt{2}(4n_0)^{n_0/4}t^{n_0/4-1/2}+\sqrt{2}(d-1)!(4(d-1)\gamma_{d-1}n_0)^{n_0/(4(d-1)\gamma_{d-1})}t^{n_0/(4(d-1)\gamma_{d-1})-1/2}
\end{align*}
which is integrable on $(0,1)$ with respect to the measure $dt/t$ for all $n\geqslant n_0=2(d-1)\gamma_{d-1}+1$. The proposition then follows from the Lebesgue dominated convergence Theorem and from the pointwise convergence.
\end{proof}
\noindent
Recall the following lemma from \cite{chinta2010zeta}:
\begin{lemma}
\label{int01exp}
For $n\in\mathbb{R}$, we have the asymptotic formula
\begin{equation*}
\int_0^1(e^{-n^2t}-1)\frac{dt}{t}=\Gamma'(1)-2\log n+o(1)\quad\textrm{as }n\rightarrow\infty.
\end{equation*}
\end{lemma}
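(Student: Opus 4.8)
The plan is to rescale the variable so that all the $n$-dependence sits in the endpoint of a convergent integral, peel off the divergent logarithmic part explicitly, and then identify the leftover constant with $\Gamma'(1)$.

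First I would substitute $u=n^2t$, which leaves the multiplicative measure invariant ($dt/t=du/u$) and turns the quantity into $\int_0^{n^2}(e^{-u}-1)\,du/u$. The integrand is harmless at the lower limit since $e^{-u}-1=-u+O(u^2)$, so $(e^{-u}-1)/u=-1+O(u)$ is bounded near $0$; all the asymptotic content now lives in the upper limit $n^2\to\infty$.

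Next I would split the integral at $u=1$. The piece $\int_0^1(e^{-u}-1)\,du/u$ is a fixed constant independent of $n$. On $[1,n^2]$ I would separate the two terms of the integrand: the $-1$ contributes $-\int_1^{n^2}du/u=-\log(n^2)=-2\log n$, which is exactly the advertised logarithmic term, while $\int_1^{n^2}e^{-u}\,du/u$ converges to $\int_1^\infty e^{-u}\,du/u$ with error equal to the tail $\int_{n^2}^\infty e^{-u}\,du/u\leqslant e^{-n^2}=o(1)$. Collecting the pieces gives $\int_0^1(e^{-n^2t}-1)\,dt/t=-2\log n+C+o(1)$, where $C=\int_0^1(e^{-u}-1)\,du/u+\int_1^\infty e^{-u}\,du/u$.

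The only genuinely non-mechanical step is recognising $C=\Gamma'(1)$. I would prove this by splitting the representation $\Gamma'(1)=\int_0^\infty e^{-u}\log u\,du$ at $u=1$ and integrating each piece by parts. On $[1,\infty)$, taking $v=-e^{-u}$ and $w=\log u$ yields $\int_1^\infty e^{-u}\log u\,du=\int_1^\infty e^{-u}\,du/u$, the boundary terms vanishing at both ends. On $[0,1]$ I would instead choose the antiderivative $v=1-e^{-u}$, so that the boundary contribution behaves like $u\log u\to0$ at the origin; this gives $\int_0^1 e^{-u}\log u\,du=\int_0^1(e^{-u}-1)\,du/u$. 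Summing the two reproduces $C$ exactly, and substituting $C=\Gamma'(1)$ finishes the proof. The points to watch are the integrability at $u=0$ and the judicious choice of antiderivative there that annihilates the boundary term; the rest is routine estimation of convergent integrals.
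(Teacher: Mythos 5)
Your argument is correct: the substitution $u=n^2t$, the split at $u=1$, the exponentially small tail estimate, and the identification of the constant $\int_0^1(e^{-u}-1)\,du/u+\int_1^\infty e^{-u}\,du/u$ with $\Gamma'(1)=\int_0^\infty e^{-u}\log u\,du$ via the two integrations by parts (with the shifted antiderivative $1-e^{-u}$ killing the boundary term at the origin) are all sound. The paper itself gives no proof, merely recalling the lemma from \cite{chinta2010zeta}, and your proof is essentially the standard one found there, so there is nothing to add.
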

\begin{proposition}
With the above notation we have that
\label{liminttheta-1}
\begin{equation*}
\lim_{n\rightarrow\infty}\int_1^\infty\left(\theta_{C_n^\Gamma}(n^2t)-1\right)\frac{dt}{t}=\int_1^\infty\big(\Theta_{S^1}(c_\Gamma t)-1\big)\frac{dt}{t}.
\end{equation*}
\end{proposition}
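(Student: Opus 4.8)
The plan is to establish the identity by applying the Lebesgue dominated convergence theorem to the sequence $f_n(t)=\theta_{C_n^\Gamma}(n^2t)-1$ on the interval $[1,\infty)$ with the measure $dt/t$. The pointwise convergence is already available: Proposition \ref{thetalimit} gives $f_n(t)\to\Theta_{S^1}(c_\Gamma t)-1$ for every $t\geqslant0$. Hence the only work remaining is to produce a single dominating function, independent of $n$, that is integrable against $dt/t$ on $[1,\infty)$.

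To manufacture such a bound I would first note that, since $\lambda_0=0$ is the only vanishing eigenvalue, one has $f_n(t)=\sum_{j=1}^{n-1}e^{-\lambda_j n^2t}\geqslant0$, so that $\lvert f_n(t)\rvert=f_n(t)$. I would then reuse the estimate already developed in the proof of Proposition \ref{thetalimit}: there the inequality $(1-\cos v)/v^2\geqslant c$ on $[0,\pi]$ together with $e^{-2n^2t}\sum_{x\in\mathbb{Z}}I_x(2n^2t)=1$ yielded $\theta_{C_n^\Gamma}(n^2t)\leqslant\sum_{j=0}^{n-1}e^{-c'tj}$ for a constant $c'>0$. Discarding the $j=0$ term gives, for all $n$ and all $t$,
\[
0\leqslant f_n(t)\leqslant\sum_{j=1}^{n-1}e^{-c'tj}\leqslant\sum_{j=1}^\infty e^{-c'tj}=\frac{e^{-c't}}{1-e^{-c't}}.
\]
Call this last function $h(t)$. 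The essential point is that subtracting the constant $1$ converts the earlier bound $1/(1-e^{-c't})$ — which was merely bounded, hence \emph{not} $dt/t$-integrable, on $[1,\infty)$ — into a function $h$ enjoying genuine exponential decay as $t\to\infty$.

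It then remains only to check that $h$ is integrable with respect to $dt/t$ on $[1,\infty)$, which is immediate: since $1/t\leqslant1$ there and $1-e^{-c't}\geqslant1-e^{-c'}>0$, we have $\int_1^\infty h(t)\,dt/t\leqslant\int_1^\infty h(t)\,dt<\infty$. With this $n$-uniform dominating bound and the pointwise limit of Proposition \ref{thetalimit} in hand, dominated convergence yields the desired interchange of limit and integral; the finiteness of the right-hand side follows from the same exponential decay, namely $\Theta_{S^1}(c_\Gamma t)-1=2\sum_{j\geqslant1}e^{-4\pi^2 j^2 c_\Gamma t}$. The main (and essentially only) obstacle is producing the $n$-uniform, exponentially decaying majorant; once the zero eigenvalue is separated off, this is a direct consequence of the estimate already proved for Proposition \ref{thetalimit}.
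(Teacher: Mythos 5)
Your proof is correct and follows essentially the same route as the paper: both apply dominated convergence with the pointwise limit from Proposition \ref{thetalimit} and dominate $\theta_{C_n^\Gamma}(n^2t)-1$ by a geometric series with exponential decay in $t$, the key observation in both being that subtracting $1$ removes the $j=0$ term and leaves an integrable majorant on $(1,\infty)$. The only cosmetic difference is that the paper re-derives the majorant directly from the eigenvalue expression (\ref{theta2}), bounding the product over the extra generators by $1$ and using $\sin(\pi x)\geqslant c\pi x$, whereas you recycle the chain of inequalities already established in the proof of Proposition \ref{thetalimit}; the resulting bounds agree up to constants.
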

\begin{proof}
From Proposition \ref{thetalimit} we have for all $t>0$, the pointwise limit
\begin{equation*}
\lim_{n\rightarrow\infty}\theta_{C_n^\Gamma}(n^2t)-1=\Theta_{S^1}(c_\Gamma t)-1.
\end{equation*}
From (\ref{theta2}) we have
\begin{equation*}
\theta_{C_n^\Gamma}(n^2t)=1+\sum_{j=1}^{n-1}e^{-4\sin^2(\pi j/n)n^2t}\prod_{i=1}^{d-1}e^{-4\sin^2(\pi\gamma_ij/n)n^2t}.
\end{equation*}
Since the product on $i$ is smaller than $1$, we have
\begin{equation*}
\theta_{C_n^\Gamma}(n^2t)\leqslant1+\sum_{j=1}^{n-1}e^{-4\sin^2(\pi j/n)n^2t}=1+2\sum_{j=1}^{\lfloor n/2\rfloor}e^{-4\sin^2(\pi j/n)n^2t}.
\end{equation*}
Using the elementary bound
\begin{equation*}
\sin(\pi x)\geqslant\pi x\left(1-\pi^2x^2/6\right)\geqslant c\pi x
\end{equation*}
for all $x\in[0,1/2]$, where $c=1-\pi^2/24>0$, we have
\begin{equation*}
\theta_{C_n^\Gamma}(n^2t)-1\leqslant2\sum_{j=1}^{\lfloor n/2\rfloor}e^{-4c^2\pi^2j^2t}\leqslant2\sum_{j=1}^{\infty}e^{-djt}=\frac{2}{e^{dt}-1}\leqslant\frac{2}{1-e^{-d}}e^{-dt},
\end{equation*}
for all $t\geqslant1$, where $d=4c^2\pi^2>0$. Since it is integrable on $(1,\infty)$ with respect to the measure $dt/t$, the proposition follows from the Lebesgue dominated convergence Theorem.
\end{proof}
\begin{proposition}
With the above notation we have
\label{limintI0}
\begin{equation*}
\lim_{n\rightarrow\infty}\int_1^\infty ne^{-2dn^2t}I_0^\Gamma(2n^2t,\ldots,2n^2t)\frac{dt}{t}=\frac{1}{\sqrt{\pi c_\Gamma}}.
\end{equation*}
\end{proposition}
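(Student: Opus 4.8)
The plan is to combine the pointwise limit already established in the proof of Proposition \ref{thetalimit} with an $n$-independent dominating bound, and then invoke the Lebesgue dominated convergence theorem. Indeed, from (\ref{limtheta0}) we already know that for every $t>0$,
\[
\lim_{n\rightarrow\infty}ne^{-2dn^2t}I_0^\Gamma(2n^2t,\ldots,2n^2t)=\frac{1}{\sqrt{4\pi c_\Gamma t}},
\]
so the integrand converges pointwise on $(1,\infty)$ to $\tfrac{1}{\sqrt{4\pi c_\Gamma t}}\cdot\tfrac1t$. The only thing left to justify is the interchange of limit and integral.

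The main work is to exhibit a dominating function independent of $n$. First I would start from the integral representation (\ref{intrepI}) with $k_1=0$, namely
\[
ne^{-2dn^2t}I_0^\Gamma(2n^2t,\ldots,2n^2t)=\frac{1}{2\pi}\int_{-\pi n}^{\pi n}e^{-2n^2t\left(d-\cos(w/n)-\sum_{i=1}^{d-1}\cos(\gamma_iw/n)\right)}dw.
\]
Writing the exponent as $(1-\cos(w/n))+\sum_{i=1}^{d-1}(1-\cos(\gamma_iw/n))$, I would discard the nonnegative $\gamma_i$-terms and apply the elementary inequality $1-\cos v\geqslant c\,v^2$ valid on $[-\pi,\pi]$ (with $c>0$) to $v=w/n\in[-\pi,\pi]$, obtaining $d-\cos(w/n)-\sum_{i=1}^{d-1}\cos(\gamma_iw/n)\geqslant c\,(w/n)^2$. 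Substituting this and extending the range of integration to all of $\mathbb{R}$ then yields the uniform bound
\[
ne^{-2dn^2t}I_0^\Gamma(2n^2t,\ldots,2n^2t)\leqslant\frac{1}{2\pi}\int_{-\infty}^\infty e^{-2ctw^2}dw=\frac{1}{2\sqrt{2\pi c\,t}},
\]
which is independent of $n$.

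Consequently the integrand is dominated on $(1,\infty)$ by the function $\tfrac{1}{2\sqrt{2\pi c}}\,t^{-3/2}$, which is integrable there. By dominated convergence one may pass the limit inside the integral, so that
\[
\lim_{n\rightarrow\infty}\int_1^\infty ne^{-2dn^2t}I_0^\Gamma(2n^2t,\ldots,2n^2t)\frac{dt}{t}=\int_1^\infty\frac{1}{\sqrt{4\pi c_\Gamma t}}\frac{dt}{t}.
\]
Finally I would compute this elementary integral: since $\int_1^\infty t^{-3/2}dt=2$, the right-hand side equals $\tfrac{2}{\sqrt{4\pi c_\Gamma}}=\tfrac{1}{\sqrt{\pi c_\Gamma}}$, as claimed.

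The only delicate point—the main obstacle—is that the terms $1-\cos(\gamma_iw/n)$ cannot be bounded below by $c(\gamma_iw/n)^2$ uniformly in $n$, since $\gamma_iw/n$ may leave $[-\pi,\pi]$ as $w$ ranges over $[-\pi n,\pi n]$. This is harmless, however, because these terms are nonnegative and can simply be dropped: the single $w/n$-term already furnishes Gaussian decay strong enough to produce the integrable $t^{-1/2}$ majorant. This is exactly the estimate used for the $k_1=0$ case in the proof of Proposition \ref{thetalimit}.
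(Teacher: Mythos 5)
Your proof is correct, and the overall skeleton (pointwise limit from (\ref{limtheta0}), an $n$-uniform majorant of order $t^{-1/2}$, dominated convergence, then the elementary integral $\int_1^\infty t^{-3/2}\,dt=2$) matches the paper's. The one step where you diverge is the construction of the dominating function. You work from the Fourier-integral representation (\ref{intrepI}) with $k_1=0$, drop the nonnegative terms $1-\cos(\gamma_i w/n)$, apply $1-\cos v\geqslant cv^2$ on $[-\pi,\pi]$ to the single term $v=w/n$, and extend the integration to all of $\mathbb{R}$ to get a Gaussian bound $\frac{1}{2\sqrt{2\pi ct}}$ --- essentially recycling the estimate already used for the $k_1=0$ case in the proof of Proposition \ref{thetalimit}, as you note. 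The paper instead works from the lattice-sum representation $I_0^\Gamma(2n^2t,\ldots,2n^2t)=\sum_{(k_2,\ldots,k_d)}I_{-\sum\gamma_ik_{i+1}}(2n^2t)\prod_{i=2}^dI_{k_i}(2n^2t)$, bounds the first factor uniformly by $\frac{1}{n\sqrt{2t}}$ via Lemma \ref{lemma4.6}, and sums the remaining factors using $e^{-2n^2t}\sum_{k}I_k(2n^2t)=1$, arriving at the majorant $\frac{1}{\sqrt{2t}}$. Both majorants are $O(t^{-1/2})$ uniformly in $n$ and both are perfectly adequate; your route has the small advantage of reusing machinery already set up for Proposition \ref{thetalimit} rather than invoking Lemma \ref{lemma4.6}, while the paper's route avoids any discussion of the range of validity of the cosine inequality (the point you correctly flag and dispose of at the end). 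Your identification of that issue --- that $\gamma_i w/n$ may exit $[-\pi,\pi]$, so those terms must be dropped rather than estimated --- is exactly right and is what makes your bound legitimate.
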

\begin{proof}
By definition, we have
\begin{equation*}
I_0^\Gamma(2n^2t,\ldots,2n^2t)=\sum_{(k_2,\ldots,k_d)\in\mathbb{Z}^{d-1}}I_{-\sum_{i=1}^{d-1}\gamma_ik_{i+1}}(2n^2t)\prod_{i=2}^dI_{k_i}(2n^2t).
\end{equation*}
From Lemma \ref{lemma4.6} we have the uniform upper bound
\begin{equation*}
ne^{-2n^2t}I_{-\sum_{i=1}^{d-1}\gamma_ik_{i+1}}(2n^2t)\leqslant\frac{1}{\sqrt{2t}}.
\end{equation*}
Hence
\begin{equation*}
ne^{-2dn^2t}I_0^\Gamma(2n^2t,\ldots,2n^2t)\leqslant\frac{1}{\sqrt{2t}}(e^{-2n^2t}\sum_{k\in\mathbb{Z}}I_k(2n^2t))^{d-1}=\frac{1}{\sqrt{2t}}
\end{equation*}
which is integrable on $(1,\infty)$ with respect to the measure $dt/t$. By the Lebesgue dominated convergence Theorem it follows that
\begin{equation*}
\lim_{n\rightarrow\infty}\int_1^\infty ne^{-2dn^2t}I_0^\Gamma(2n^2t,\ldots,2n^2t)\frac{dt}{t}=\int_1^\infty\frac{1}{\sqrt{4\pi c_\Gamma t}}\frac{dt}{t}=\frac{1}{\sqrt{\pi c_\Gamma}}.
\end{equation*}
\end{proof}
Since $\int_1^\infty e^{-n^2t}dt/t$ converges to zero as $n\rightarrow\infty$, putting Lemma \ref{int01exp} and Propositions \ref{int01theta-I0}, \ref{liminttheta-1} and \ref{limintI0} together in Theorem \ref{methCirc} leads to the asymptotic of the $\mathcal{H}_{C_n^\Gamma}$ term as $n\rightarrow\infty$:
\begin{align*}
\mathcal{H}_{C_n^\Gamma}&=2\log n-\int_0^{c_\Gamma}(\Theta_{S^1}(t)-\frac{1}{\sqrt{4\pi t}})\frac{dt}{t}-\Gamma'(1)-\int_{c_\Gamma}^\infty(\Theta_{S^1}(t)-1)\frac{dt}{t}+\frac{1}{\sqrt{\pi c_\Gamma}}+o(1).
\end{align*}
Using equation (\ref{zetaS1}) we can then rewrite:
\begin{equation*}
\mathcal{H}_{C_n^\Gamma}=2\log n-\zeta'_{S^1}(0)-\log c_\Gamma+o(1)\quad\textrm{as }n\rightarrow\infty.
\end{equation*}
Since $\zeta'_{S^1}(0)=0$ (\ref{zeta'_S1(0)}) we get
\begin{equation*}
\mathcal{H}_{C_n^\Gamma}=2\log n-\log c_\Gamma+o(1)\quad\textrm{as }n\rightarrow\infty
\end{equation*}
and so
\begin{equation*}
\log\textrm{det}^\ast\Delta_{C_n^\Gamma}=n\int_0^\infty(e^{-t}-e^{-2dt}I_0^\Gamma(2t,\ldots,2t))\frac{dt}{t}+2\log n-\log c_\Gamma+o(1)\quad\textrm{as }n\rightarrow\infty
\end{equation*}
which proves Theorem \ref{ThCirc}.
\subsection{Asymptotic number of spanning trees and comparison of the results}
Notice that in the trivial case $d=1$, the cycle has $n$ spanning trees so $\log\textrm{det}^\ast\Delta_{C_n}=\log n^2$. On the other hand, from Proposition \ref{intI0}
\begin{equation*}
\int_0^\infty(e^{-t}-e^{-2t}I_0(2t))\frac{dt}{t}=0
\end{equation*}
and so the right hand side of the asymptotic development is $2\log n$. Therefore the theorem is verified in this particular case.\\
From Kirchhoff's matrix tree theorem and Theorem \ref{ThCirc}, the number of spanning trees in the circulant graph $C_n^\Gamma$ with $\Gamma=\{1,\gamma_1,\ldots,\gamma_{d-1}\}$ is asymptotically given by
\begin{equation}
\tau(C_n^\Gamma)=\frac{n}{c_\Gamma}e^{n\mathcal{I}_d^\Gamma+o(1)}\textrm{ as }n\rightarrow\infty.
\label{taucirc}
\end{equation}
The lead term can be rewritten as
\begin{equation*}
\mathcal{I}_d^\Gamma=\int_0^\infty(e^{-t}-e^{-2dt}I_0^\Gamma(2t,\ldots,2t))\frac{dt}{t}=\log(2d)+\int_0^\infty e^{-2dt}(1-I_0^\Gamma(2t,\ldots,2t))\frac{dt}{t}.
\end{equation*}
From the integral representation of $I_0^\Gamma$ (\ref{intrepI}) and writing the exponential as a series one has
\begin{align*}
\int_0^\infty e^{-2dt}(1-I_0^\Gamma(2t,\ldots,2t))\frac{dt}{t}&=-\frac{1}{2\pi}\int_0^\infty e^{-2dt}\sum_{n=1}^\infty\frac{2^n}{n!}\int_{-\pi}^\pi(\cos w+\sum_{i=1}^{d-1}\cos(\gamma_iw))^ndwt^n\frac{dt}{t}\\
&=-\frac{1}{2\pi}\sum_{n=1}^\infty\frac{1}{d^n}\frac{1}{n}\int_{-\pi}^\pi(\cos w+\sum_{i=1}^{d-1}\cos(\gamma_iw))^ndw\\
&=\frac{1}{2\pi}\int_{-\pi}^\pi\log\left(1-\frac{\cos w+\sum_{i=1}^{d-1}\cos(\gamma_iw)}{d}\right)dw\\
&=\int_0^1\log(\sin^2(\pi w)+\sum_{i=1}^{d-1}\sin^2(\pi\gamma_iw))dw+\log\frac{2}{d}.
\end{align*}
Hence the lead term is given by
\begin{equation*}
\mathcal{I}_d^\Gamma=\log4+\int_0^1\log(\sin^2(\pi w)+\sum_{i=1}^{d-1}\sin^2(\pi\gamma_iw))dw
\end{equation*}
which corresponds to Lemma $2$ of \cite{golin2010asymptotic}.\\
As mentioned in the introduction, the authors show in \cite{zhang1999number} that the number of spanning trees in a circulant graph is given by
\begin{equation*}
\tau(C_n^{\gamma_1,\ldots,\gamma_d})=na_n^2
\end{equation*}
where $a_n$ satisfies a recurrence relation which behaves asymptotically as $c\phi^n$ for some constants $c$ and $\phi$ which can be determined numerically. Comparing with (\ref{taucirc}) it follows that
\begin{equation*}
c^2=\frac{1}{c_\Gamma}
\end{equation*}
which is numerically verified with the values in Table 1 in \cite{zhang1999number}. This answers to one of the questions asked in the conclusion of \cite{atajan2006further}.
\section{Asymptotic behaviour of spectral determinant on degenerating tori}
\label{asdt}
We consider the sequence of $d$-dimensional discrete tori described in the introduction. For simplicity, we denote by $\theta_{\Lambda_n}$ the theta function associated to $\mathbb{Z}^d/\Lambda_n\mathbb{Z}^d$. It is given by
\begin{equation*}
\theta_{\Lambda_n}(t)=\sum_{\lambda_j}e^{-\lambda_jt}
\end{equation*}
where
\begin{align*}
\{\lambda_j\}_{j=0,1,\ldots,\det(\Lambda_n)-1}=\{2d&-2\sum_{i=1}^p\cos(2\pi m_i/(\alpha_ia_n))-2\sum_{i=1}^{d-p}\cos(2\pi m'_i/(\beta_in)):\\
&0\leqslant m_i<\alpha_ia_n,i=1,\ldots,p\textrm{ and }0\leqslant m'_i<\beta_in,i=1,\ldots,d-p\}
\end{align*}
are the eigenvalues of the combinatorial Laplacian on $\mathbb{Z}^d/\Lambda_n\mathbb{Z}^d$. From the theta inversion formula on $\mathbb{Z}^d/\Lambda_n\mathbb{Z}^d$ (Proposition \ref{thetainv}) we have for all $t\geqslant0$
\begin{equation}
\label{thetaDegT}
\theta_{\Lambda_n}(t)=\left(\prod_{i=1}^p\alpha_ia_ne^{-2t}\sum_{k\in\mathbb{Z}}I_{k\alpha_ia_n}(2t)\right)\left(\prod_{i=1}^{d-p}\beta_ine^{-2t}\sum_{k\in\mathbb{Z}}I_{k\beta_in}(2t)\right).
\end{equation}
\subsection{\texorpdfstring{Computation of the lead term when \boldmath$a_n$ grows sublinearly with respect to \boldmath$n$}{sublinear}}
Let $c_d$ be the integral below. A numerical estimation of it is discussed in section $7.2$ of \cite{chinta2010zeta}.
\begin{equation*}
c_d=\int_0^\infty\left(e^{-t}-e^{-2dt}I_0(2t)^d\right)\frac{dt}{t}.
\end{equation*}
The lead term of $\log\textnormal{det}^\ast\Delta_{\mathbb{Z}^d/\Lambda_n\mathbb{Z}^d}$ in Theorem \ref{methDis} is given by
\begin{align*}
&\det(\Lambda_n)\mathcal{I}_d^{\{\alpha_i\}_{i=1}^p}=\det(\Lambda_n)\int_0^\infty\Big(e^{-t}-e^{-2dt}I_0(2t)^{d-p}\sum_{(k_1,\ldots,k_p)\in\mathbb{Z}^p}\prod_{i=1}^pI_{k_i\alpha_ia_n}(2t)\Big)\frac{dt}{t}\\
&=\det(\Lambda_n)c_d-\det(\Lambda_n)\int_0^\infty e^{-2dt}I_0(2t)^{d-p}\sum_{(k_1,\ldots,k_p)\in\mathbb{Z}^p\backslash\{0\}}\left(\prod_{i=1}^pI_{k_i\alpha_ia_n}(2t)\right)\frac{dt}{t}\\
&=n^{d-p}a_n^p\det(\Lambda)c_d-\left(\frac{n}{a_n}\right)^{d-p}\det(B)\int_0^\infty J(a_n,t)dt
\end{align*}
where in the last equality the integration variable $t$ is changed into $a_n^2t$ and $J(a_n,t)$ is given by
\begin{equation*}
J(a_n,t)\defeq\frac{1}{t}\left(a_ne^{-2a_n^2t}I_0(2a_n^2t)\right)^{d-p}\sum_{(k_1,\ldots,k_p)\in\mathbb{Z}^p\backslash\{0\}}\prod_{i=1}^p\left(\alpha_ia_ne^{-2a_n^2t}I_{k_i\alpha_ia_n}(2a_n^2t)\right).
\end{equation*}
From Proposition \ref{prop4.7} we have that
\begin{equation*}
\lim_{n\rightarrow\infty}a_ne^{-2a_n^2t}I_0(2a_n^2t)=\frac{1}{\sqrt{4\pi t}}
\end{equation*}
and
\begin{equation*}
\lim_{n\rightarrow\infty}\alpha_ia_ne^{-2a_n^2t}I_{k_i\alpha_ia_n}(2a_n^2t)=\frac{\alpha_i}{\sqrt{4\pi t}}e^{-\alpha_i^2k_i^2/(4t)}.
\end{equation*}
From the definition of the zeta function (\ref{zeta2}), we have that
\begin{align*}
\int_0^\infty\frac{1}{(4\pi t)^{d/2}}\sum_{(k_1,\ldots,k_p)\in\mathbb{Z}^p\backslash\{0\}}e^{-\sum_{i=1}^p\alpha_i^2k_i^2/(4t)}\frac{dt}{t}&=\frac{1}{\pi^{d/2}}\Gamma(d/2)\sum_{(k_1,\ldots,k_p)\in\mathbb{Z}^p\backslash\{0\}}\frac{1}{(\sum_{i=1}^p\alpha_i^2k_i^2)^{d/2}}\\
&=(4\pi)^{d/2}\Gamma(d/2)\zeta_{\mathbb{R}^p/A^{-1}\mathbb{Z}^p}(d/2).
\end{align*}
So as $n\rightarrow\infty$,
\begin{equation*}
\int_0^\infty J(a_n,t)dt=(4\pi)^{d/2}\Gamma(d/2)\zeta_{\mathbb{R}^p/A^{-1}\mathbb{Z}^p}(d/2)+o(1).
\end{equation*}
The exchange of the limit as $n$ goes to infinity with the integral over $t$ can be justified using the same argument as in the proof of Proposition \ref{int01Theta} below on $(0,1)$ and Lemma \ref{lemma5.3} with inequality (\ref{boundI0}) on $(1,\infty)$. Hence as $n\rightarrow\infty$ the lead term behaves as
\begin{align*}
\det(\Lambda_n)\mathcal{I}_d^{\{\alpha_i\}_{i=1}^p}&=n^{d-p}a_n^p\textrm{det}(\Lambda)c_d-\left(\frac{n}{a_n}\right)^{d-p}\left(\det(\Lambda)(4\pi)^{d/2}\Gamma(d/2)\zeta_{\mathbb{R}^p/A^{-1}\mathbb{Z}^p}(d/2)+o(1)\right).
\end{align*}
\begin{remark}
To find one more term in the asymptotic development we would need to show that one can exchange the limit as $n\rightarrow\infty$ with the integration over $t$ of
\begin{equation*}
a_n^2(J(a_n,t)-(4\pi)^{d/2}\Gamma(d/2)\zeta_{\mathbb{R}^p/A^{-1}\mathbb{Z}^p}(d/2)).
\end{equation*}
We were not able to do that. One way of proving this is to find an upper bound of the above for all $n$ which is integrable over $t$ on $(0,\infty)$ and apply the Lebesgue dominated convergence Theorem. This means that we need a sharp integrable upper bound of $e^{-t}I_\nu(t)$. To the best of our knowledge, the best upper bound of $e^{-t}I_\nu(t)$ is given in \cite{balachandran2013exponential} and is not sharp enough. Assuming that one can exchange the limit with integration, the asymptotic development would be as $n\rightarrow\infty$:
\begin{align*}
&\log\textnormal{det}^\ast\Delta_{\mathbb{Z}^d/\Lambda_n\mathbb{Z}^d}=n^{d-p}a_n^p\textrm{det}(\Lambda)c_d\\
&-\left(\frac{n}{a_n}\right)^{d-p}\det(\Lambda)(4\pi)^{d/2}\Bigg[\Gamma(d/2)\zeta_{\mathbb{R}^p/A^{-1}\mathbb{Z}^p}(d/2)\\
&\qquad\qquad\qquad\qquad\qquad\quad+\frac{1}{a_n^2}\Bigg(-(d+4)\Gamma(d/2+1)\zeta_{\mathbb{R}^p/A^{-1}\mathbb{Z}^p}(d/2+1)\\
&\qquad\qquad\qquad\qquad\qquad\quad+\frac{4}{3}\Gamma(d/2+3)\sum_{i=1}^px_i^2\frac{\partial^2}{\partial x_i^2}\zeta_{\mathbb{R}^p/A(x)^{-1/2}\mathbb{Z}^p}(d/2+1)\Biggr\rvert_{\substack{x_i=\alpha_i^2\\i=1,\ldots,p}}\Bigg)+o\left(\frac{1}{a_n^2}\right)\Bigg]
\end{align*}
where $A(x)$ is the diagonal matrix $A(x)\defeq\textnormal{diag}(x_1,\ldots,x_p)$.
\end{remark}
\subsection{\texorpdfstring{Computation of the lead term when \boldmath$a_n$ is constant}{constant}}
\label{ancst}
When $a_n=1$, the lead term is given by
\begin{equation*}
\det(\Lambda_n)\mathcal{I}_d^{\{\alpha_i\}_{i=1}^p}=\det(\Lambda_n)\int_0^\infty\Big(e^{-t}-e^{-2dt}I_0(2t)^{d-p}\sum_{(k_1,\ldots,k_p)\in\mathbb{Z}^p}\prod_{i=1}^pI_{k_i\alpha_i}(2t)\Big)\frac{dt}{t}.
\end{equation*}
From the theta inversion formula (\ref{thetainvZ/mZ}) we have for $i=1,\ldots,p$
\begin{equation*}
\alpha_ie^{-2t}\sum_{k_i\in\mathbb{Z}}I_{k_i\alpha_i}(2t)=\sum_{j_i=0}^{\alpha_i-1}e^{-2(1-\cos(2\pi j_i/\alpha_i))t}.
\end{equation*}
Hence
\begin{equation*}
\det(\Lambda_n)\mathcal{I}_d^{\{\alpha_i\}_{i=1}^p}=n^{d-p}\det(B)\sum_{j=0}^{\det(A)-1}\int_0^\infty\left(e^{-t}-I_0(2t)^{d-p}e^{-(2(d-p)+\lambda_j)t}\right)\frac{dt}{t}
\end{equation*}
where
\begin{equation*}
\{\lambda_j\}_j=\{2p-2\sum_{i=1}^p\cos(2\pi j_i/\alpha_i):j_i=0,1,\ldots,\alpha_i-1,\textrm{ for }i=1,\ldots,p\},
\end{equation*}
$j=0,1,\ldots,\det(A)-1$, are the eigenvalues of the Laplacian on $\mathbb{Z}^p/A\mathbb{Z}^p$. 
\subsection{Asymptotic behaviour of the second term}
In this section we compute the asymptotics of the $\mathcal{H}_{\Lambda_n}$ term when $a_n$ indifferently goes to infinity sublinearly with respect to $n$ or is constant. To do this we change the integration variable $t$ into $n^2t$ in (\ref{Htermdt})
\begin{equation*}
\mathcal{H}_{\Lambda_n}=-\int_0^\infty\!\Big(\theta_{\Lambda_n}(n^2t)-\det(\Lambda_n)e^{-2dn^2t}I_0(2n^2t)^{d-p}\!\!\!\sum_{\substack{(k_1,\ldots,k_p)\\ \in\mathbb{Z}^p}}\!\prod_{i=1}^pI_{k_i\alpha_ia_n}(2n^2t)-1+e^{-n^2t}\Big)\frac{dt}{t}.
\end{equation*}
\begin{proposition}
\label{convTheta}
With the above notation, we have for all $t\geqslant0$,
\begin{equation*}
\lim_{n\rightarrow\infty}\theta_{\Lambda_n}(n^2t)=\Theta_{\mathbb{R}^{d-p}/B\mathbb{Z}^{d-p}}(t).
\end{equation*}
\end{proposition}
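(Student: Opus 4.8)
The plan is to start from the product expression (\ref{thetaDegT}) for the theta function, replace $t$ by $n^2t$, and split $\theta_{\Lambda_n}(n^2t)$ into the two finite products
\[
P_n(t)=\prod_{i=1}^p\alpha_ia_ne^{-2n^2t}\sum_{k\in\mathbb{Z}}I_{k\alpha_ia_n}(2n^2t),\qquad Q_n(t)=\prod_{i=1}^{d-p}\beta_ine^{-2n^2t}\sum_{k\in\mathbb{Z}}I_{k\beta_in}(2n^2t).
\]
Since each block is a product of finitely many factors, it suffices to compute the limit of every factor separately and then multiply; the whole content of the proof is to show that $P_n(t)\to1$ and $Q_n(t)\to\Theta_{\mathbb{R}^{d-p}/B\mathbb{Z}^{d-p}}(t)$ for each fixed $t>0$ (the case $t=0$ being degenerate, as both sides diverge).

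For the first block I would argue that each factor tends to $1$. By the theta inversion formula (\ref{thetainvZ/mZ}) applied on $\mathbb{Z}/(\alpha_ia_n)\mathbb{Z}$, the $i$th factor equals $\sum_{j=0}^{\alpha_ia_n-1}e^{-2(1-\cos(2\pi j/(\alpha_ia_n)))n^2t}$, whose $j=0$ term is exactly $1$. In the sublinear regime $a_n/n\to0$ this is precisely Proposition \ref{prop4.7bis} applied to the sequence $\alpha_ia_n$, which again grows sublinearly in $n$ because $\alpha_i$ is a fixed integer, so the factor tends to $1$. In the constant regime $a_n=1$ the sum is finite with $j\in\{0,\ldots,\alpha_i-1\}$, and every term with $j\neq0$ carries a fixed positive exponent $1-\cos(2\pi j/\alpha_i)>0$, hence decays like $e^{-cn^2t}\to0$; the factor again tends to $1$. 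Multiplying the $p$ factors gives $P_n(t)\to1$, so the $p$ degenerating directions collapse to a trivial contribution.

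For the second block I would first fix $k$ and invoke Proposition \ref{prop4.7} with $b(n)=\beta_in$, which gives the termwise limit $\beta_ine^{-2n^2t}I_{k\beta_in}(2n^2t)\to\frac{\beta_i}{\sqrt{4\pi t}}e^{-\beta_i^2k^2/(4t)}$. To pass the limit through the infinite sum over $k$ I would produce a dominating bound: rescaling Lemma \ref{lemma4.6} (with its parameter $t$ set to $2t$ and $x=\beta_ik$) gives, for all $n\geq n_0$ and $k\geq0$,
\[
\beta_ine^{-2n^2t}I_{k\beta_in}(2n^2t)\leq\frac{\beta_i}{\sqrt{2t}}\left(1+\frac{\beta_ik}{2n_0t}\right)^{-n_0\beta_ik/2},
\]
and the symmetry $I_{-y}=I_y$ extends this to $k<0$. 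The right-hand side is independent of $n$ and summable over $k\in\mathbb{Z}$, so the dominated convergence theorem for series yields $\beta_ine^{-2n^2t}\sum_{k}I_{k\beta_in}(2n^2t)\to\frac{\beta_i}{\sqrt{4\pi t}}\sum_{k\in\mathbb{Z}}e^{-\beta_i^2k^2/(4t)}$. This interchange of limit and summation is the step I expect to be the main technical obstacle.

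It then remains to identify the resulting product with $\Theta_{\mathbb{R}^{d-p}/B\mathbb{Z}^{d-p}}(t)$. For each $i$ I would apply the Jacobi theta (Poisson summation) identity $\sum_{k\in\mathbb{Z}}e^{-\beta_i^2k^2/(4t)}=\frac{\sqrt{4\pi t}}{\beta_i}\sum_{m\in\mathbb{Z}}e^{-(2\pi)^2(m/\beta_i)^2t}$, so that the $i$th factor becomes $\sum_{m\in\mathbb{Z}}e^{-(2\pi)^2(m/\beta_i)^2t}=\Theta_{\mathbb{R}/\beta_i\mathbb{Z}}(t)$. Taking the product over $i=1,\ldots,d-p$ and recalling from (\ref{evRT}) that the eigenvalues of the Laplacian on $\mathbb{R}^{d-p}/B\mathbb{Z}^{d-p}$ are $(2\pi)^2\sum_i(m_i/\beta_i)^2$ (since $B^\ast=B^{-1}$ for the diagonal matrix $B$), the product factorizes exactly as $Q_n(t)\to\Theta_{\mathbb{R}^{d-p}/B\mathbb{Z}^{d-p}}(t)$. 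Combining this with $P_n(t)\to1$ gives $\lim_{n\to\infty}\theta_{\Lambda_n}(n^2t)=\Theta_{\mathbb{R}^{d-p}/B\mathbb{Z}^{d-p}}(t)$, as claimed.
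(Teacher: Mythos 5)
Your proposal is correct and follows essentially the same route as the paper: the same factorization into the $p$ degenerating factors (handled via Proposition \ref{prop4.7bis}, with your separate elementary argument for the constant case $a_n=1$) and the $d-p$ factors of size $\beta_i n$ (termwise limit via Proposition \ref{prop4.7}). The only difference is that you spell out the interchange of limit and sum using the geometric-type domination from Lemma \ref{lemma4.6}, a detail the paper delegates to the proof of Proposition $5.2$ in \cite{chinta2010zeta}, and you make explicit the Poisson-summation identification of the limit with $\Theta_{\mathbb{R}^{d-p}/B\mathbb{Z}^{d-p}}(t)$.
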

\begin{proof}
The theta function (\ref{thetaDegT}) with the change of variable is given by
\begin{equation*}
\theta_{\Lambda_n}(n^2t)=\left(\prod_{i=1}^p\alpha_ia_ne^{-2n^2t}\sum_{k\in\mathbb{Z}}I_{k\alpha_ia_n}(2n^2t)\right)\left(\prod_{i=1}^{d-p}\beta_ine^{-2n^2t}\sum_{k\in\mathbb{Z}}I_{k\beta_in}(2n^2t)\right).
\end{equation*}
From Proposition \ref{prop4.7bis} we have that
\begin{equation*}
\lim_{n\rightarrow\infty}\prod_{i=1}^p\alpha_ia_ne^{-2n^2t}\sum_{k\in\mathbb{Z}}I_{k\alpha_ia_n}(2n^2t)=1
\end{equation*}
and from Proposition \ref{prop4.7} we have
\begin{equation*}
\lim_{n\rightarrow\infty}\prod_{i=1}^{d-p}\beta_ine^{-2n^2t}I_{k\beta_in}(2n^2t)=\prod_{i=1}^{d-p}\frac{\beta_i}{\sqrt{4\pi t}}e^{-(\beta_ik)^2/(4t)}.
\end{equation*}
The proposition follows if we can exchange the limit with the sum. This can be justified in the same way as the proof of Proposition $5.2$ in \cite{chinta2010zeta}.
\end{proof}
\begin{proposition}
\label{int01Theta}
With the above notation, we have that
\begin{align*}
&\lim_{n\rightarrow\infty}\int_0^1\Big(\theta_{\Lambda_n}(n^2t)-\det(\Lambda_n)e^{-2dn^2t}I_0(2n^2t)^{d-p}\sum_{(k_1,\ldots,k_p)\in\mathbb{Z}^p}\prod_{i=1}^pI_{k_i\alpha_ia_n}(2n^2t)\Big)\frac{dt}{t}\\
&=\int_0^1\left(\Theta_{\mathbb{R}^{d-p}/B\mathbb{Z}^{d-p}}(t)-\frac{\det(B)}{(4\pi t)^{(d-p)/2}}\right)\frac{dt}{t}.
\end{align*}
\end{proposition}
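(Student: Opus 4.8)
The plan is to establish the identity by the Lebesgue dominated convergence theorem applied on $(0,1)$ with respect to the measure $dt/t$, exactly in the spirit of Proposition \ref{int01theta-I0}. Two ingredients are needed: pointwise convergence of the integrand, and a dominating function that is integrable against $dt/t$ near $t=0$ and independent of $n$. I would first exploit the product structure coming from the diagonal form of $\Lambda_n$. Setting
\[
P_i=\alpha_ia_ne^{-2n^2t}\sum_{k\in\mathbb{Z}}I_{k\alpha_ia_n}(2n^2t),\quad Q_i=\beta_ine^{-2n^2t}\sum_{k\in\mathbb{Z}}I_{k\beta_in}(2n^2t),\quad Q_i^0=\beta_ine^{-2n^2t}I_0(2n^2t),
\]
one has $\theta_{\Lambda_n}(n^2t)=\prod_{i=1}^pP_i\prod_{i=1}^{d-p}Q_i$, while the subtracted term factors as $\prod_{i=1}^pP_i\prod_{i=1}^{d-p}Q_i^0$, since the sum over $(k_1,\ldots,k_p)\in\mathbb{Z}^p$ splits into a product of one-dimensional sums.

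For the pointwise limit, the factor $\theta_{\Lambda_n}(n^2t)$ converges to $\Theta_{\mathbb{R}^{d-p}/B\mathbb{Z}^{d-p}}(t)$ by Proposition \ref{convTheta}. For the subtracted term, Proposition \ref{prop4.7bis} (applied to the sublinear sequences $\alpha_ia_n$) gives $\prod_{i=1}^pP_i\to1$, and Proposition \ref{prop4.7} with $k=0$ gives $\prod_{i=1}^{d-p}Q_i^0\to\prod_{i=1}^{d-p}(\beta_i/\sqrt{4\pi t})=\det(B)/(4\pi t)^{(d-p)/2}$. Hence the integrand converges pointwise to $\Theta_{\mathbb{R}^{d-p}/B\mathbb{Z}^{d-p}}(t)-\det(B)/(4\pi t)^{(d-p)/2}$, which is the integrand on the right-hand side.

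For the dominating function I would write the integrand as the nonnegative quantity $\prod_{i=1}^pP_i\bigl(\prod_{i=1}^{d-p}Q_i-\prod_{i=1}^{d-p}Q_i^0\bigr)$ and telescope the difference of products, so that each resulting summand carries exactly one factor $R_i\defeq Q_i-Q_i^0=2\beta_ine^{-2n^2t}\sum_{k\geq1}I_{k\beta_in}(2n^2t)$ together with factors $Q_j$ and $Q_j^0$. The building blocks are then estimated uniformly in $n$ on $(0,1)$. Using the theta inversion formula (\ref{thetainvZ/mZ}) together with the elementary bound $\sin(\pi x)\geq2x$ on $[0,1/2]$ and comparison with a Gaussian integral bounds $P_i$ by $1+C_it^{-1/2}$ (here $a_n/n\leq1$ for large $n$ makes the constant $n$-independent) and $Q_j$ by $C_jt^{-1/2}$; inequality (\ref{boundI0}) bounds $Q_j^0$ by $Ct^{-1/2}$; and Lemma \ref{lemma4.6}, applied with index $k\beta_i$ and argument $2n^2t$ and followed by a geometric summation over $k$, bounds $R_i$ by $Ct^{n_0\beta_i/2-1/2}$ for all $n\geq n_0$. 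Multiplying through, each telescoped summand is bounded by a constant times $t^{n_0\beta_i/2-d/2}$, so the whole integrand is dominated by $C\sum_{i=1}^{d-p}t^{n_0\beta_i/2-d/2}$.

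The main obstacle, and the reason the argument is delicate, is precisely this last estimate near $t=0$. Each individual factor $P_i$, $Q_j$, $Q_j^0$ blows up like $t^{-1/2}$, so the naive product bound is of order $t^{-d/2}$, which is \emph{not} integrable against $dt/t$. The decisive point is that the telescoping always extracts one factor $R_i$, whose decay $t^{n_0\beta_i/2-1/2}$ can be made arbitrarily strong by choosing the free integer $n_0$ large. Taking $n_0>d$, so that $n_0\beta_i>d$ for every $i$ (using $\beta_i\geq1$), makes each exponent $n_0\beta_i/2-d/2$ strictly positive; hence $C\sum_{i=1}^{d-p}t^{n_0\beta_i/2-d/2}$ is integrable for $dt/t$ on $(0,1)$ and independent of $n\geq n_0$. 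Dominated convergence together with the pointwise limit then yields the claimed identity.
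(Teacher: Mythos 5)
Your proof is correct and rests on the same pillars as the paper's: pointwise convergence from Propositions \ref{convTheta}, \ref{prop4.7} and \ref{prop4.7bis}, and a dominating function built from Lemma \ref{lemma4.6} followed by a geometric summation, with integrability near $t=0$ secured by taking the free parameter $n_0$ large. The organization differs in two places. First, where you telescope $\prod_iQ_i-\prod_iQ_i^0$ so that each summand carries exactly one factor $R_i$ gaining the power $t^{n_0\beta_i/2}$, the paper instead partitions the sum over $\mathbb{Z}^{d-p}\setminus\{0\}$ according to how many of the $k_i$ are nonzero, so that a term with $r$ nonzero indices gains $t^{rn_0\min_i\beta_i/2}$; the two decompositions are interchangeable here, since a single gaining factor already suffices. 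Second, and more substantively, you bound each $p$-dimensional factor by $1+C_it^{-1/2}$ (legitimately, via theta inversion and $a_n\leqslant n$) and absorb the resulting extra $t^{-p/2}$ by enlarging $n_0$, whereas the paper invokes Proposition \ref{prop4.7bis} to assert the uniform bound $3/2$ on each such factor for all $n\geqslant n_{i,0}$; that bound cannot hold uniformly down to $t=0$, since for fixed $n$ the factor tends to $\alpha_ia_n$ as $t\to0^+$, so your version of this step is the one that actually closes the dominated-convergence argument, at the harmless price of the slightly larger threshold $n_0>d$ in place of the paper's $n_0=2(d-p)/\min_i\beta_i+1$.
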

\begin{proof}
From Propositions \ref{convTheta}, \ref{prop4.7} and \ref{prop4.7bis} we have the pointwise convergence:
\begin{align*}
&\lim_{n\rightarrow\infty}\theta_{\Lambda_n}(n^2t)-\det(\Lambda_n)e^{-2dn^2t}I_0(2n^2t)^{d-p}\sum_{(k_1,\ldots,k_p)\in\mathbb{Z}^p}\prod_{i=1}^pI_{k_i\alpha_ia_n}(2n^2t)\\
&=\Theta_{\mathbb{R}^{d-p}/B\mathbb{Z}^{d-p}}(t)-\frac{\det(B)}{(4\pi t)^{(d-p)/2}}.
\end{align*}
We have
\begin{align*}
&\theta_{\Lambda_n}(n^2t)-\det(\Lambda_n)e^{-2dn^2t}I_0(2n^2t)^{d-p}\sum_{(k_1,\ldots,k_p)\in\mathbb{Z}^p}\prod_{i=1}^pI_{k_i\alpha_ia_n}(2n^2t)\nonumber\\
&=\Bigg(\sum_{(k_1,\ldots,k_p)\in\mathbb{Z}^p}\prod_{i=1}^p\alpha_ia_ne^{-2n^2t}I_{k_i\alpha_ia_n}(2n^2t)\Bigg)\Bigg(\sum_{\substack{(k_1,\ldots,k_{d-p})\\ \in\mathbb{Z}^{d-p}\backslash\{0\}}}\prod_{i=1}^{d-p}\beta_ine^{-2n^2t}I_{k_i\beta_in}(2n^2t)\Bigg).
\end{align*}
The first product of the above can be bounded using Proposition \ref{prop4.7bis}. Indeed we have that for all $i=1,\ldots,p$ there exists an $n_{i,0}$ such that for all $n\geqslant n_{i,0}$
\begin{equation*}
\alpha_ia_ne^{-2n^2t}\sum_{k_i\in\mathbb{Z}}I_{k_i\alpha_ia_n}(2n^2t)<\frac{3}{2}.
\end{equation*}
The second product can be rewritten in $d-p$ sums with exactly $r$ of the $k_i$ which are non-zero and $d-p-r$ which are zero. Since the $(k_1,\ldots,k_{d-p})=0$ is taken off the sum, we have $1\leqslant r\leqslant d-p$. Let $n_0=\max_{1\leqslant i\leqslant p}{n_{i,0}}$. From inequality (\ref{boundI0}) and Lemma \ref{lemma4.6} we have that for $t>0$ and all $n\geqslant n_0$ the above is less equal than
\begin{align*}
&2^{d-p}\det(B)\sum_{r=1}^{d-p}C^{d-p-r}t^{-(d-p)/2}\prod_{i=1}^r\sum_{k_i=1}^\infty\left(1+\frac{\beta_ik_i}{2n_0t}\right)^{-n_0\beta_ik_i/2}\\
&\leqslant2^{d-p}\det(B)\sum_{r=1}^{d-p}C^{d-p-r}t^{-(d-p)/2}\prod_{i=1}^r\frac{1}{\left(1+\beta_i/(2n_0t)\right)^{n_0\beta_i/2}-1}\\
&\leqslant2^{d-p}\det(B)\sum_{r=1}^{d-p}C^{d-p-r}t^{-(d-p)/2}\left(\prod_{i=1}^r(\beta_i/(2n_0))^{n_0\beta_i/2}\right)t^{rn_0\min_{1\leqslant i\leqslant d-p}{\beta_i}/2}.
\end{align*}
Hence if we choose $n_0=2(d-p)/\min_{1\leqslant i\leqslant d-p}{\beta_i}+1$ the above is integrable on $(0,1)$ with respect to the measure $dt/t$. The proposition then follows from the Lebesgue dominated convergence Theorem.
\end{proof}
We now study the convergence of the integral over $(1,\infty)$. The theta function can be written as the product of two theta functions, that is
\begin{equation*}
\theta_{\Lambda_n}(n^2t)=\theta_{\textrm{diag}(\beta_1n,\ldots,\beta_{d-p}n)}(n^2t)\theta_{\textrm{diag}(\alpha_1a_n,\ldots,\alpha_pa_n)}(n^2t).
\end{equation*}
The first theta function can be bounded using Lemma $5.3$ in \cite{chinta2010zeta} that we recall below.
\begin{lemma}
\label{lemma5.3}
Let
\begin{equation*}
\theta_{\textnormal{abs}}(t)=2\sum_{j=1}^\infty e^{-cj^2t}
\end{equation*}
with $c=4\pi^2(1-\pi^2/24)^2$. Let $n_0$ be a positive integer. Then for any $t>0$ and $n\geqslant n_0$ we have the bound
\begin{equation*}
\theta_{\textnormal{diag}(\beta_1n,\ldots,\beta_{d-p}n)}(n^2t)\leqslant\prod_{i=1}^{d-p}\left(1+e^{-4n_0^2t}+\theta_\textnormal{abs}(t/(4\beta_i^2))\right).
\end{equation*}
\end{lemma}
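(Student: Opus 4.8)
The plan is to exploit the diagonal structure of $\textnormal{diag}(\beta_1n,\ldots,\beta_{d-p}n)$, which makes the eigenvalues of the combinatorial Laplacian split as a sum of one-dimensional contributions and hence factorizes the theta function as a product over the $d-p$ coordinates. Indeed, using $2-2\cos\theta=4\sin^2(\theta/2)$ together with the eigenvalue expression, the theta function evaluated at $n^2t$ reads
\begin{equation*}
\theta_{\textnormal{diag}(\beta_1n,\ldots,\beta_{d-p}n)}(n^2t)=\prod_{i=1}^{d-p}\Bigg(\sum_{m=0}^{\beta_in-1}e^{-4n^2t\sin^2(\pi m/(\beta_in))}\Bigg),
\end{equation*}
so it suffices to bound each factor $S_i(t)$ by $1+e^{-4n_0^2t}+\theta_{\textnormal{abs}}(t/(4\beta_i^2))$ and then take the product over $i=1,\ldots,d-p$.

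First I would isolate the $m=0$ term, which contributes exactly $1$. When $\beta_in$ is even there is a single central term at $m=\beta_in/2$, equal to $e^{-4n^2t}$; since $n\geqslant n_0$ this is at most $e^{-4n_0^2t}$, accounting for the middle summand in the bound (when $\beta_in$ is odd this term is simply absent and the bound only becomes looser). The remaining indices pair up under the symmetry $m\leftrightarrow\beta_in-m$, which leaves the exponent invariant, so that
\begin{equation*}
S_i(t)\leqslant 1+e^{-4n_0^2t}+2\sum_{m=1}^{\lfloor(\beta_in-1)/2\rfloor}e^{-4n^2t\sin^2(\pi m/(\beta_in))}.
\end{equation*}
All the surviving indices satisfy $m/(\beta_in)\leqslant 1/2$, which is precisely the range where the sine admits a quadratic lower bound.

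Then I would apply the elementary inequality $\sin(\pi x)\geqslant\pi x(1-\pi^2/24)$ valid for $x\in[0,1/2]$, the same bound already used in the proof of Proposition \ref{liminttheta-1}. With $x=m/(\beta_in)$ this yields
\begin{equation*}
4n^2t\sin^2(\pi m/(\beta_in))\geqslant 4\pi^2(1-\pi^2/24)^2\frac{n^2m^2}{(\beta_in)^2}t=c\,\frac{m^2}{\beta_i^2}t,
\end{equation*}
with $c=4\pi^2(1-\pi^2/24)^2$ exactly as in the statement. Extending the sum to all $m\geqslant 1$ then gives
\begin{equation*}
2\sum_{m=1}^{\lfloor(\beta_in-1)/2\rfloor}e^{-4n^2t\sin^2(\pi m/(\beta_in))}\leqslant 2\sum_{m=1}^\infty e^{-cm^2t/\beta_i^2}=\theta_{\textnormal{abs}}(t/\beta_i^2).
\end{equation*}
Since $\theta_{\textnormal{abs}}$ is decreasing and $t/(4\beta_i^2)\leqslant t/\beta_i^2$, we have $\theta_{\textnormal{abs}}(t/\beta_i^2)\leqslant\theta_{\textnormal{abs}}(t/(4\beta_i^2))$, so each factor obeys the claimed bound; multiplying over the $d-p$ coordinates concludes the argument.

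The only delicate bookkeeping is the parity case analysis for the central term and the reconciliation of the factor $4$ inside the argument of $\theta_{\textnormal{abs}}$, but both are dispatched by crude monotonicity estimates rather than by any sharp computation. The genuine content is the quadratic lower bound for $\sin^2$ on $[0,1/2]$, which is already available, so I expect no real obstacle: the estimate is essentially a one-dimensional heat-kernel bound applied coordinatewise and tensorized into a product.
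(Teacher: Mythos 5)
Your proof is correct. The paper does not actually prove this lemma (it is recalled verbatim from \cite{chinta2010zeta}), and your argument---factorizing the theta function coordinatewise over the diagonal entries, isolating the $m=0$ and (possible) central term, pairing $m\leftrightarrow\beta_i n-m$, and applying the quadratic lower bound $\sin(\pi x)\geqslant\pi x(1-\pi^2/24)$ on $[0,1/2]$---is exactly the standard derivation behind the cited result; indeed you obtain the slightly sharper bound with $\theta_{\textnormal{abs}}(t/\beta_i^2)$ and correctly relax it to $\theta_{\textnormal{abs}}(t/(4\beta_i^2))$ by monotonicity.
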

It is easy to verify that similarly the second theta function can be bounded by the following
\begin{equation}
\label{boundIa}
\theta_{\textrm{diag}(\alpha_1a_n,\ldots,\alpha_pa_n)}(n^2t)\leqslant\left(1+e^{-4t}+\theta_\textrm{abs}(t)\right)^p.
\end{equation}
Therefore it follows that $\theta_{\Lambda_n}(n^2t)-1$ is $dt/t$-integrable on $(1,\infty)$. So by the Lebesgue dominated convergence Theorem we can exchange the limit and integral. Hence the following proposition is proved:
\begin{proposition}
With the above notation we have that
\label{convTheta-1}
\begin{equation*}
\lim_{n\rightarrow\infty}\int_1^\infty\left(\theta_{\Lambda_n}(n^2t)-1\right)\frac{dt}{t}=\int_1^\infty\left(\Theta_{\mathbb{R}^{d-p}/B\mathbb{Z}^{d-p}}(t)-1\right)\frac{dt}{t}.
\end{equation*}
\end{proposition}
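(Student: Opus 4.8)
The plan is to establish the identity by an application of the Lebesgue dominated convergence theorem on $(1,\infty)$ with respect to the measure $dt/t$. The pointwise convergence of the integrand is already in hand: Proposition \ref{convTheta} gives $\theta_{\Lambda_n}(n^2t)\to\Theta_{\mathbb{R}^{d-p}/B\mathbb{Z}^{d-p}}(t)$ for every $t\geqslant0$, hence $\theta_{\Lambda_n}(n^2t)-1\to\Theta_{\mathbb{R}^{d-p}/B\mathbb{Z}^{d-p}}(t)-1$. Moreover, since the zero eigenvalue contributes the summand $e^0=1$ to the theta function, we have $\theta_{\Lambda_n}(n^2t)\geqslant1$, so the integrand is non-negative and it suffices to exhibit a single $n$-independent upper bound that is $dt/t$-integrable on $(1,\infty)$.

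First I would use the diagonal structure of $\Lambda_n$ to factor the theta function as
\[
\theta_{\Lambda_n}(n^2t)=\theta_{\textrm{diag}(\beta_1n,\ldots,\beta_{d-p}n)}(n^2t)\,\theta_{\textrm{diag}(\alpha_1a_n,\ldots,\alpha_pa_n)}(n^2t).
\]
The first factor, whose sides grow linearly in $n$, is bounded uniformly for $n\geqslant n_0$ by Lemma \ref{lemma5.3}, namely by $\prod_{i=1}^{d-p}(1+e^{-4n_0^2t}+\theta_{\textrm{abs}}(t/(4\beta_i^2)))$. The second factor, whose sides grow like $a_n$, is controlled by the analogous estimate (\ref{boundIa}), namely by $(1+e^{-4t}+\theta_{\textrm{abs}}(t))^p$; crucially this holds whether $a_n\to\infty$ sublinearly or $a_n=1$, so the same argument covers both regimes treated in Theorems \ref{ThDegT} and \ref{ThDegTcst}. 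Multiplying the two bounds produces the candidate dominating function
\[
g(t)=\prod_{i=1}^{d-p}\left(1+e^{-4n_0^2t}+\theta_{\textrm{abs}}(t/(4\beta_i^2))\right)\left(1+e^{-4t}+\theta_{\textrm{abs}}(t)\right)^p-1,
\]
which is independent of $n$ for all $n\geqslant n_0$ and dominates $\theta_{\Lambda_n}(n^2t)-1$.

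The key point is then the integrability of $g$ against $dt/t$ on $(1,\infty)$. Expanding the product, the all-ones term equals the constant $1$, which is exactly removed by the subtraction; every other term carries at least one factor among $e^{-4n_0^2t}$, $e^{-4t}$, or $\theta_{\textrm{abs}}(\cdot)$, and each of these decays at least like $e^{-ct}$ for some $c>0$ as $t\to\infty$ (the series defining $\theta_{\textrm{abs}}$ decays exponentially in its argument). Since $1/t\leqslant1$ on $(1,\infty)$, each such term is $dt/t$-integrable there, and $g$ is a finite sum of them. Dominated convergence then gives the claimed interchange of limit and integral.

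The only delicate bookkeeping — which I expect to be the main point to check rather than a genuine obstacle — is verifying that Lemma \ref{lemma5.3} and the estimate (\ref{boundIa}) can be applied with one common threshold $n_0$, and that the subtraction of $1$ indeed cancels the sole non-decaying contribution so that no constant term survives in $g$. Everything beyond this is routine.
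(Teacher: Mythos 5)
Your proposal is correct and follows essentially the same route as the paper: the paper likewise factors $\theta_{\Lambda_n}(n^2t)$ into the $\beta_i n$-part and the $\alpha_i a_n$-part, bounds them by Lemma \ref{lemma5.3} and (\ref{boundIa}) respectively, observes that the resulting product minus $1$ is $dt/t$-integrable on $(1,\infty)$, and concludes by dominated convergence together with the pointwise limit from Proposition \ref{convTheta}. Your extra remarks (non-negativity of the integrand and the term-by-term expansion showing the constant cancels) only make explicit what the paper leaves implicit.
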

\begin{proposition}
\label{convV}
With the above notation we have that
\begin{equation*}
\lim_{n\rightarrow\infty}\int_1^\infty \det(\Lambda_n)e^{-2dn^2t}I_0(2n^2t)^{d-p}\sum_{(k_1,\ldots,k_p)\in\mathbb{Z}^p}\prod_{i=1}^pI_{k_i\alpha_ia_n}(2n^2t)\frac{dt}{t}=\frac{2}{d-p}\frac{\det(B)}{(4\pi)^{(d-p)/2}}.
\end{equation*}
\end{proposition}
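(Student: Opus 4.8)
The plan is to recognise the integrand as a product of a ``$\beta$-part'' that converges to the spectral density of the limiting real torus and an ``$\alpha$-part'' that converges to $1$, and then to pass the limit inside the integral by dominated convergence. First I would write $\det(\Lambda_n)=n^{d-p}a_n^p\det(A)\det(B)$ and split the exponential as $e^{-2dn^2t}=(e^{-2n^2t})^{d-p}(e^{-2n^2t})^{p}$, so that the integrand factors as
\begin{equation*}
\det(B)\left(ne^{-2n^2t}I_0(2n^2t)\right)^{d-p}\cdot\prod_{i=1}^p\left(\alpha_ia_ne^{-2n^2t}\sum_{k_i\in\mathbb{Z}}I_{k_i\alpha_ia_n}(2n^2t)\right),
\end{equation*}
where the second factor is exactly $\theta_{\textnormal{diag}(\alpha_1a_n,\ldots,\alpha_pa_n)}(n^2t)$.

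Second, I would establish the pointwise limit. For the first factor, Proposition \ref{prop4.7} with $k=0$ and $b(n)=n$ gives $ne^{-2n^2t}I_0(2n^2t)\to1/\sqrt{4\pi t}$, so the $\beta$-part tends to $\det(B)/(4\pi t)^{(d-p)/2}$. For the $\alpha$-part, in the sublinear regime each sequence $\alpha_ia_n$ is again sublinear in $n$, so Proposition \ref{prop4.7bis} yields $\alpha_ia_ne^{-2n^2t}\sum_{k_i}I_{k_i\alpha_ia_n}(2n^2t)\to1$; in the constant regime ($a_n=1$) the theta inversion formula (\ref{thetainvZ/mZ}) rewrites this factor as $\sum_{j_i=0}^{\alpha_i-1}e^{-2(1-\cos(2\pi j_i/\alpha_i))n^2t}$, whose $j_i\neq0$ terms decay, again leaving the limit $1$. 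Hence the integrand converges pointwise to $\det(B)/(4\pi t)^{(d-p)/2}$, and the target value follows from
\begin{equation*}
\int_1^\infty\frac{\det(B)}{(4\pi t)^{(d-p)/2}}\frac{dt}{t}=\frac{\det(B)}{(4\pi)^{(d-p)/2}}\cdot\frac{2}{d-p}.
\end{equation*}

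The hard part, as usual in these arguments, is justifying the exchange of limit and integral, that is, producing a uniform $dt/t$-integrable majorant on $(1,\infty)$. For the $\beta$-part I would use the bound (\ref{boundI0}): substituting $s=2t$ there gives $ne^{-2n^2t}I_0(2n^2t)\leqslant C(2t)^{-1/2}$, so this factor is dominated by $\det(B)(C/\sqrt2)^{d-p}t^{-(d-p)/2}$. For the $\alpha$-part I would invoke the bound (\ref{boundIa}), namely $\theta_{\textnormal{diag}(\alpha_1a_n,\ldots,\alpha_pa_n)}(n^2t)\leqslant(1+e^{-4t}+\theta_\textnormal{abs}(t))^p$; since $e^{-4t}$ and $\theta_\textnormal{abs}(t)$ are decreasing, this is bounded on $(1,\infty)$ by the constant $K=(1+e^{-4}+\theta_\textnormal{abs}(1))^p$. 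Multiplying the two bounds and the factor $1/t$ produces a majorant of the form $\textrm{const}\cdot t^{-(d-p)/2-1}$, which is $dt/t$-integrable on $(1,\infty)$ precisely because $d-p\geqslant1$. Dominated convergence then licenses passing to the limit, completing the proof. The single point requiring care is that the pointwise convergence and the majorant must hold simultaneously in both the sublinear and constant regimes, which is why I treat the two cases for the $\alpha$-part separately while retaining the same uniform bound.
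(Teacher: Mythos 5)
Your proposal is correct and follows essentially the same route as the paper: the same factorisation of the integrand into the $\beta$-part controlled by the bound (\ref{boundI0}) and the $\alpha$-part identified with $\theta_{\textnormal{diag}(\alpha_1a_n,\ldots,\alpha_pa_n)}(n^2t)$ and controlled by (\ref{boundIa}), followed by dominated convergence against the majorant $Ct^{-(d-p)/2}(1+e^{-4t}+\theta_{\textnormal{abs}}(t))^p$. You merely spell out more explicitly the pointwise limits and the evaluation of $\int_1^\infty t^{-(d-p)/2-1}\,dt=2/(d-p)$, which the paper leaves implicit.
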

\begin{proof}
Combining (\ref{boundI0}) with (\ref{boundIa}) we have
\begin{equation*}
\det(\Lambda_n)e^{-2dn^2t}I_0(2n^2t)^{d-p}\sum_{(k_1,\ldots,k_p)\in\mathbb{Z}^p}\prod_{i=1}^pI_{k_i\alpha_ia_n}(2n^2t)\leqslant Ct^{-(d-p)/2}(1+e^{-4t}+\theta_\textrm{abs}(t))^p
\end{equation*}
for some constant $C>0$, which is $dt/t$-integrable on $(1,\infty)$. The result follows from the pointwise convergence and from the Lebesgue dominated convergence Theorem.
\end{proof}
Since $\int_1^\infty e^{-n^2t}dt/t\rightarrow0$ as $n\rightarrow\infty$, the asymptotic of the $\mathcal{H}_{\Lambda_n}$ term then follows from Lemma \ref{int01exp}, Propositions \ref{int01Theta}, \ref{convTheta-1} and \ref{convV}:
\begin{align*}
\mathcal{H}_{\Lambda_n}&=2\log n-\int_0^1\left(\Theta_{\mathbb{R}^{d-p}/B\mathbb{Z}^{d-p}}(t)-\frac{\textrm{det}(B)}{(4\pi t)^{(d-p)/2}}\right)\frac{dt}{t}-\Gamma'(1)\nonumber\\
&\ \ \ -\int_1^\infty\left(\Theta_{\mathbb{R}^{d-p}/B\mathbb{Z}^{d-p}}(t)-1\right)\frac{dt}{t}+\frac{2}{d-p}\frac{\textrm{det}(B)}{(4\pi)^{(d-p)/2}}+o(1)\quad\textrm{as }n\rightarrow\infty.
\end{align*}
Rewriting it in terms of the spectral zeta function with the help of equation (\ref{zeta'(0)}) yields
\begin{equation}
\label{Hterm}
\mathcal{H}_{\Lambda_n}=2\log n-\zeta'_{\mathbb{R}^{d-p}/B\mathbb{Z}^{d-p}}(0)+o(1)\textrm{ as }n\rightarrow\infty.
\end{equation}
The calculation of the lead term in section \ref{ancst} together with equation (\ref{Hterm}) gives Theorem \ref{ThDegTcst}. For the case where $a_n$ grows sublinearly with respect to $n$, the error in the lead term, $(n/a_n)^{d-p}o(1)$, is bigger than the $\mathcal{H}$ term (\ref{Hterm}), therefore the asymptotic is given by
\begin{equation*}
\log\textnormal{det}^\ast\Delta_{\mathbb{Z}^d/\Lambda_n\mathbb{Z}^d}=n^{d-p}a_n^p\det(\Lambda)c_d-\left(\frac{n}{a_n}\right)^{d-p}\left(\det(\Lambda)(4\pi)^{d/2}\Gamma(d/2)\zeta_{\mathbb{R}^p/A^{-1}\mathbb{Z}^p}(d/2)+o(1)\right)
\end{equation*}
as $n\rightarrow\infty$.
\subsection{Examples}
The following examples are here to illustrate the general formula and to highlight the interesting constants appearing in some particular cases. In the examples below, $\alpha_i$ and $\beta_i$ denote non-zero positive integers.
\subsubsection{\texorpdfstring{Example with \boldmath $p=1$ and \boldmath$d=2$\unboldmath\ }{p=1, d=2}}
Let $\Lambda_n=\textrm{diag}(\alpha a_n,\beta n)$ be a sequence of diagonal matrices where $a_n$ grows sublinearly with respect to $n$. In \cite{chinta2010zeta} the authors showed that $c_2=4G/\pi$ where $G$ is the Catalan constant. Then as $n\rightarrow\infty$
\begin{equation*}
\log\textnormal{det}^\ast\Delta_{\mathbb{Z}^2/\Lambda_n\mathbb{Z}^2}=na_n\alpha\beta\frac{4G}{\pi}-\frac{n}{a_n}\left(\frac{\beta}{\alpha}\frac{\pi}{3}+o(1)\right).
\end{equation*}
\subsubsection{\texorpdfstring{Example with \boldmath$p=1$ and any \boldmath$d$\unboldmath\ }{p=1}}
Let $\Lambda_n=\textrm{diag}(\alpha a_n,\beta_1n,\ldots,\beta_{d-1}n)$ be a sequence of diagonal matrices where $a_n$ grows sublinearly with respect to $n$. Then as $n\rightarrow\infty$
\begin{align*}
\log\textnormal{det}^\ast\Delta_{\mathbb{Z}^d/\Lambda_n\mathbb{Z}^d}&=n^{d-1}a_n\textrm{det}(\Lambda)c_d-\left(\frac{n}{a_n}\right)^{d-1}\left(\frac{\beta_1\cdots\beta_{d-1}}{\alpha^{d-1}}\frac{2}{\pi^{d/2}}\Gamma(d/2)\zeta(d)+o(1)\right)
\end{align*}
where $\zeta$ is the Riemann zeta function.
\subsubsection{\texorpdfstring{Example with \boldmath$a_n$ constant and \boldmath$p=d-1$}{p=d-1, an constant}}
Let $\Lambda_n^0=\textrm{diag}(\alpha_1,\ldots,\alpha_{d-1},\beta n)$ be a sequence of diagonal matrices. From (\ref{zeta'}), $-\zeta'_{\mathbb{R}/\beta\mathbb{Z}}(0)=2\log\beta$. Using Proposition \ref{intI0} one has as $n\rightarrow\infty$
\begin{equation*}
\log\textnormal{det}^\ast\Delta_{\mathbb{Z}^d/\Lambda_n^0\mathbb{Z}^d}=n\beta\sum_{j=0}^{\det(A)-1}\argcosh\left(1+\frac{\lambda_j}{2}\right)+2\log n+2\log\beta+o(1)
\end{equation*}
where
\begin{equation*}
\{\lambda_j\}_j=\{2(d-1)-2\sum_{i=1}^{d-1}\cos(2\pi j_i/\alpha_i):j_i=0,1,\ldots,\alpha_i-1,\textnormal{ for }i=1,\ldots,d-1\},
\end{equation*}
$j=0,1,\ldots,\det(A)-1$, are the eigenvalues of the Laplacian on $\mathbb{Z}^{d-1}/A\mathbb{Z}^{d-1}$.
\subsubsection{\texorpdfstring{Example with \boldmath$a_n$ constant, \boldmath$p=1$ and \boldmath$d=3$}{p=1, d=3, an constant}}
Let $\Lambda_n^0=\textrm{diag}(\alpha,\beta_1n,\beta_2n)$ be a sequence of diagonal matrices. From section $6.3$ in \cite{chinta2010zeta}, we have that
\begin{equation*}
-\zeta'_{\mathbb{R}^2/\textrm{diag}(\beta_1,\beta_2)\mathbb{Z}^2}(0)=2\log(\beta_2\eta(i\beta_2/\beta_1)^2)
\end{equation*}
where $\eta$ is the Dedekind eta function defined for $z\in\mathbb{C}$ with $\textrm{Im}(z)>0$ by
\begin{equation*}
\eta(z)=e^{\pi iz/12}\prod_{n=1}^\infty(1-e^{2\pi inz}).
\end{equation*}
Hence as $n\rightarrow\infty$
\begin{align*}
\log\textnormal{det}^\ast\Delta_{\mathbb{Z}^3/\Lambda_n^0\mathbb{Z}^3}&=n^2\beta_1\beta_2\sum_{j=0}^{\alpha-1}\int_0^\infty\left(e^{-t}-I_0(2t)^2e^{-(6-2\cos(2\pi j/\alpha))t}\right)\frac{dt}{t}\\
&\ \ \ +2\log n+2\log(\beta_2\eta(i\beta_2/\beta_1)^2)+o(1).
\end{align*}
Using the special value of $\eta$ at $z=i$, $\eta(i)=\Gamma(1/4)/(2\pi^{3/4})$, one has for the special case $\beta_1=\beta_2=:\beta$ the asymptotic behaviour as $n\rightarrow\infty$
\begin{align*}
\log\textnormal{det}^\ast\Delta_{\mathbb{Z}^3/\Lambda_n^0\mathbb{Z}^3}&=n^2\beta_1\beta_2\sum_{j=0}^{\alpha-1}\int_0^\infty\left(e^{-t}-I_0(2t)^2e^{-(6-2\cos(2\pi j/\alpha))t}\right)\frac{dt}{t}\\
&\ \ \ +2\log n+\log(\beta^2\Gamma(1/4)^4/(16\pi^3))+o(1).
\end{align*}
\section[A comment on circulant graphs with non-fixed generators]{A comment on circulant graphs with non-fixed generators\footnote{At the time of reviewing this paper, this conjecture has been proved and will appear in a forthcoming paper.}}
In \cite{golin2010asymptotic,zhang2005chebyshev} the authors considered circulant graphs with non-fixed generators. In \cite{golin2010asymptotic} they computed the lead term of the asymptotic number of spanning trees. It is conceivable that the techniques used here could be extended to improve their result and compute the second term. In \cite{zhang2005chebyshev} they computed the exact number of spanning trees in $C_{\beta n}^{1,n}$ for $\beta\in\{2,3,4,6,12\}$ via Chebyshev polynomials, but were not able to generalize to other values of $\beta$. We propose a conjecture for the case $\beta=5$:\\
For all $n\geqslant2$,
\begin{align*}
\tau(C_{5n}^{1,n})&=\frac{n}{5}\left(\left(\frac{9-\sqrt{5}+\sqrt{70-18\sqrt{5}}}{4}\right)^n+\left(\frac{9-\sqrt{5}+\sqrt{70-18\sqrt{5}}}{4}\right)^{-n}+\frac{1-\sqrt{5}}{2}\right)^2\\
&\ \ \ \times\left(\left(\frac{9+\sqrt{5}+\sqrt{70+18\sqrt{5}}}{4}\right)^n+\left(\frac{9+\sqrt{5}+\sqrt{70+18\sqrt{5}}}{4}\right)^{-n}+\frac{1+\sqrt{5}}{2}\right)^2.
\end{align*}
Notice that the coefficients in the formula can be expressed in terms of integrals involving modified $I$-Bessel function. Indeed, let
\begin{equation*}
J_k^\beta=\int_0^\infty\left(e^{-t}-e^{-2t(2-\cos(2\pi k/\beta))}I_0(2t)\right)\frac{dt}{t},\quad k=1,\ldots,\beta-1.
\end{equation*}
Then from Proposition \ref{intI0}, the above can be rewritten as
\begin{align*}
\tau(C_{5n}^{1,n})=\frac{n}{5}&\left(e^{nJ_1^5}+e^{-nJ_1^5}+\frac{1}{2}(1-\sqrt{5})\right)\left(e^{nJ_2^5}+e^{-nJ_2^5}+\frac{1}{2}(1+\sqrt{5})\right)\\
&\times\left(e^{nJ_3^5}+e^{-nJ_3^5}+\frac{1}{2}(1+\sqrt{5})\right)\left(e^{nJ_4^5}+e^{-nJ_4^5}+\frac{1}{2}(1-\sqrt{5})\right).
\end{align*}
Therefore for other values of $\beta$ the general formula might have the form
\begin{equation*}
\tau(C_{\beta n}^{1,n})=\frac{n}{\beta}\prod_{k=1}^{\beta-1}\left(e^{nJ_k^\beta}+e^{-nJ_k^\beta}+\alpha_k^\beta\right),\textrm{ for all }n\geqslant1,
\end{equation*}
where $\alpha_k^\beta$ are coefficients which are not known for $\beta\geqslant7$.

\nocite{*}
\bibliographystyle{plain}
\bibliography{bibliography2}

\end{document}